\definecolor{darkgreen}{rgb}{0,0.5,0}
\definecolor{darkred}{rgb}{0.7,0,0}
\theoremstyle{plain}
\newtheorem{lemma}{Lemma}[section]
\newtheorem{thm}[lemma]{Theorem}
\newtheorem{cor}[lemma]{Corollary}
\newtheorem{ass}{Assumption}
\theoremstyle{definition}
\newtheorem{defn}{Definition}
\newtheorem{rmk}[lemma]{Remark}
\numberwithin{equation}{section}
\newcommand{\pn}{\partial_n}
\newcommand{\Chat}{\hat \C}
\newcommand{\al}{\alpha}
\newcommand{\ga}{\gamma}
\newcommand{\de}{\delta}
\newcommand{\om}{\omega}
\newcommand{\Om}{\Omega}
\newcommand{\La}{\Lambda}
\newcommand{\si}{\sigma}
\renewcommand{\th}{\theta}
\newcommand{\peps}{\partial_{\eps}}
\newcommand{\tu}{\modbb}
\newcommand{\R}{\ensuremath{{\mathbb R}}}
\newcommand{\N}{\ensuremath{{\mathbb N}}}
\newcommand{\C}{\ensuremath{{\mathbb C}}}
\newcommand{\norm}[1]{\Vert#1\Vert} 
\newcommand{\beq}{\begin{equation}}
\newcommand{\eeq}{\end{equation}}
\newcommand{\beqs}{\begin{equation*}}
\newcommand{\eeqs}{\end{equation*}}
\newcommand{\beqa}{\begin{equation}\begin{aligned}}
\newcommand{\eeqa}{\end{aligned}\end{equation}}
\newcommand{\beqas}{\begin{equation*}\begin{aligned}}
\newcommand{\eeqas}{\end{aligned}\end{equation*}}
\newcommand{\brmk}{\begin{rmk}}
\newcommand{\ermk}{\end{rmk}}
\newcommand{\partref}[1]{\hbox{(\csname @roman\endcsname{\ref{#1}})}}
\newcommand{\half}{\frac{1}{2}}
\newcommand{\thalf}{\tfrac{1}{2}}
\newcommand{\leqs}{\lesssim}
\newcommand{\geqs}{\gtrsim}
\newcommand{\abs}[1]{\vert#1\vert} 
\newcommand{\babs}[1]{\left\vert#1\right\vert}
\newcommand{\eps}{\varepsilon}
\newcommand{\na}{\nabla}
\newcommand{\TT}{\mathcal{T}}
\newcommand{\ddeps}{\tfrac{d}{d\eps}\vert_{\eps=0}}
\newcommand{\la}{\lambda}
\newcommand{\dist}{\text{dist}}
\newcommand{\Arg}{\text{Arg}}
\newcommand{\ZZ}{\mathcal{Z}}
\newcommand{\VV}{\mathcal{V}}
\newcommand{\HH}{\mathcal{H}}
\newcommand{\Id}{\mathrm{Id}}
\newcommand{\err}{\text{err}}
\newcommand{\errom}{{\err_\bb}}
\newcommand{\errga}{{\err_\ga}}
\newcommand{\Loj}{{\L}ojasiewicz }
\newcommand{\Lojns}{{\L}ojasiewicz}
\newcommand{\UU}{\mathcal{U}}
\newcommand{\DD}{\mathbb{D}}
\newcommand{\zz}{\mathfrak{z}}
\newcommand{\zzphi}{{\zz_1}}
\newcommand{\tgabm}{{\tilde \gamma_\bm}} 
\newcommand{\tgabb}{{\tilde \gamma_\bb}} 
\newcommand{\dps}{{n_0^*}} 
\newcommand{\dqs}{{n_1^*}} 
\newcommand{\bmu}{{U_0}} 
\newcommand{\bm}{{u_0}} 
\newcommand{\rmbm}{{q_0}} 
\newcommand{\rmbmc}{{q_0}} 
\newcommand{\rmbbmuc}{{q_{1,\mu}}}
\newcommand{\bbu}{{U_1}} 
\newcommand{\bb}{{u_1}} 
\newcommand{\rmbb}{{q_1}}
\newcommand{\rmmu}{{q_\mu}}
\newcommand{\modbb}{{j_1}} 
\newcommand{\modbm}{{j_0}}
\newcommand{\epssimu}{o(1)}
\newcommand{\rmbmhat}{{\hat q_0}} 
\newcommand{\rmmuhat}{{\hat q_\mu}} 
\newcommand{\rmbbhatmu}{{\hat q_{1,\mu}}}
\newcommand{\bbuhat}{\hat U_1}
\newcommand{\rmbbeps}{{q_{1}^{(\eps)}}}
\newcommand{\jrmbm}{{j_{0}}}
\newcommand{\hhompi}{{\beta}} 
\newcommand{\Acut}{{\hat A}}
\newcommand{\chiA}{{\mathbbm{1}_{A}}}
\newcommand{\chiAstar}{{\mathbbm{1}_{A^*}}}
\newcommand{\nutot}{{\bar\nu}} 
\newcommand{\gabb}{{\tilde \gamma_{\bbu}}}
\newcommand{\gabm}{{\tilde\gamma_{\bmu}}}
\newcommand{\mhalf}{{-\half}}
\newcommand{\ajbb}{{a_{j}(q_1)}}
\newcommand{\ajbm}{{a_{j}(q_0)}}
\newcommand{\akbb}{{a_{k}(q_1)}}
\newcommand{\uius}{{U_i^*}}
\newcommand{\etazero}{{\eta_0}} 
\newcommand{\etaz}{{\eta_0}} 
\newcommand{\etatau}{{\eta_\tau}} 
\newcommand{\etatilt}{\abs{\peps d\beta(0)}}
\newcommand{\phimul}{{\phi_{r_0,r_1}}}
\newcommand{\phimu}{{\phi_{\mu}}}
\newcommand{\cmu}{{c_{\mu}}}
\newcommand{\cmul}{{c_{r_0,r_1}}}
\newcommand{\HHz}{{\mathcal H_0}}
\newcommand{\RR}{\mathcal{R}} 
\newcommand{\tauS}{\tau_{g_{S^2}}}
\newcommand{\fmu}{{f_\mu}} 
\newcommand{\Ctilt}{{\norm{\beta}}} 
\newcommand{\sitilt}{ \abs{d\beta(0)}} 
\newcommand{\errcut}{\err_{\varphi}} 
\newcommand{\Emin}{E_{\text{min}}}
\title{\sc Low energy levels of harmonic maps into analytic manifolds}
\author{Melanie Rupflin}
\date{\today}
\begin{document}

\begin{abstract}
We consider the energy spectrum $\Xi_E(N)$ of harmonic maps from the sphere into a closed Riemannian manifold $N$. While a well known conjecture
asserts that $\Xi_E(N)$ is discrete whenever $N$ is analytic,  for most analytic targets it is only known that any potential accumulation point of the energy spectrum must be given by the sum of the energies of at least two 
harmonic spheres. The lowest energy level that could  hence potentially be an accumulation point of $\Xi_E$ is thus $2\Emin$. In the present paper we exclude this possibility for generic 3 manifolds and prove additional results that establish obstructions to the gluing of harmonic spheres and 
 \Lojns-estimates for almost harmonic maps.
\end{abstract}

\maketitle

\section{Introduction}\label{sect:intro}
Let $(N,g_N)$ be a closed Riemannian manifold which we can 
assume without loss of generality to be embedded in a suitable Euclidean space $\R^N$ using Nash's embedding theorem.
We recall that a map $u:S^2\to N$ 
is called harmonic if it is a critical point of the Dirichlet energy
$$E(u)=\half \int_{S^2} \abs{\na u}^2 dv_{g_{S^2}}.$$ 
Such harmonic maps are characterised by $\tauS(u)=0$ where the tension field is given by $\tauS(u)=-\na^{L^2}E(u)=\Delta_{g_{S^2}} u+A(u)(\na u,\na u)$, $A$ the second fundamental form of $N\hookrightarrow \R^N$. 

We recall that any harmonic map from $S^2$ is (weakly) conformal and hence that the set of harmonic maps from the sphere coincides with the set of (weakly) conformal parametrisations of minimal spheres in $N$. 
In particular, the energy spectrum
\newcommand{\ESpec}{\Xi_E(N,g_N)}
\newcommand{\ASpec}{\Xi_{Area}(N)}
$$\ESpec := \{E(u): u:S^2\to N \text{ harmonic, not constant }\}$$
of harmonic spheres agrees with the areas (counted with multiplicity) of (possibly branched) immersed minimal 
spheres.

While it is easy to construct smooth manifolds $(N,g_N)$ for which $\Xi_E(N,g_N)$ has accumulation points, a well known conjecture of Leon Simon and of Fang-Hua Lin \cite{Lin-conjecture} asserts that the energy spectrum of harmonic maps from $S^2$ into any closed analytic manifold is  discrete. 

In some very special cases this follows as a consequence of much stronger results such as an explicit characterisation of 
all minimal spheres in $N$ and the resulting explicit knowledge of $\Xi_E(N)$. E.g.~if the target is the round sphere then 
all harmonic spheres are described in stereographic coordinates $z\in \C$ by rational maps in either $z$ or $\bar z$ and hence $\Xi(S^2,g_{S^2})$ is made up by  the multiples of $4\pi$.

Conversely, for more general analytic targets very little is known about $\Xi_E$ with the few existing results following already from the compactness theory of (almost) harmonic maps developed in the 1990s in  \cite{Struwe1985, DT, QingTian, LinWang}
and the seminal result of Simon \cite{Simon} from 1983. 

To describe the known properties of $\Xi_E$ we first recall that sequences $u_j$ of maps with bounded energy which are almost harmonic in the sense that 
\beq
\label{def:almost-harmonic}
\norm{\tauS(u_j)}_{L^2(S^2,g_{S^2})}\to 0
\eeq
subconverge to a bubble tree without loss of energy or formation of necks: That is, there exists a harmonic base map $\om_0:S^2\to N$ and a finite set $S\subset S^2$  so that 
$u_j\to \om_0$ strongly in $H_{loc}^2(S^2\setminus S)$ and weakly in $H^1(S^2)$ and so that near each point $p\in S$ the $u_j$ are essentially described by a collection of highly concentrated harmonic spheres. Namely, for each such $p$ there exists a finite collection of harmonic maps $\om_k:\R^2\to N$, points $p_j^k\to p$ and scales $\mu_j^k\to \infty$ so that, working in stereographic coordinates centred at $p$, 
\beq
\label{eq:bt-convergence}
u_j(x)-\sum \big( \om_k(\mu_j^k(x-p_j^k))-\om_k(\infty)\big)-\om_0(x) \to 0 \text{ strongly in } L^\infty\cap H^1(B_r(p)).\eeq
We recall that 
 the conformal invariance of the energy and the result \cite{SU} of Sacks-Uhlenbeck imply that a finite energy map $u:\R^2\to N$ is harmonic if and only if $u\circ \pi^{-1}:S^2\to N$ is a harmonic map from the sphere, $\pi(x)=(\frac{2x}{1+\abs{x}^2},\frac{1-\abs{x}^2}{1+\abs{x}^2}):\R^2\cup \{\infty\}\to S^2$  the inverse  stereographic projection. 
In the following we can thus switch viewpoint 
and work on $\R^2$, or equivalently on $\hat\C=\C\cup\{\infty\}$, whenever this is more convenient and, except in the trivial case where $E(u_j)\to 0$, can pull-back a sequence of maps $u_j$ satisfying \eqref{eq:bt-convergence} by suitable M\"obius transforms to ensure that the base map $\om_0$ is non-constant. 
 
If no bubbles form, i.e. if the maps converge strongly in $H^2(S^2)$, and if $N$ is analytic then the seminal results \cite{Simon} of Simon are applicable. These ensure that for every 
harmonic map $u^*:S^2\to N$ there exists an $\eps=\eps(u^*)>0$ so that 
a
\Lojns-estimate
of the form 
\beq
\label{est:Simon}
\abs{E(u)-E(u^*)}\leq C\norm{\tauS(u)}_{L^2(S^2,g_{S^2})}^\gamma \text{ for some } \gamma=\gamma(u^*)\in (1,2]
\eeq
holds true for all maps in the neighbourhood
$$\UU(u^*)=\{u:S^2\to N: \norm{u-u^*}_{L^\infty}<\eps \text{ and }  \norm{u-u^*}_{H^1}<\eps \}.$$  
Conversely these results cannot be applied to study sequences of harmonic maps that undergo bubbling as such maps will not be $H^1$ close to a fixed harmonic map, or indeed to any set of harmonic maps for which the results of \cite{Simon} yield \Lojns-estimates on a \textit{uniform} $\eps$-neighbourhood, compare Remark \ref{rmk:spectral-gap}.  To the authors knowledge the only properties of the energy spectrum of harmonic maps into general analytic manifolds that are currently known are the ones that follow from the above mentioned compactness theory and from the work of Simon, i.e.~that 
\begin{itemize}[topsep=0pt]
\item The lowest non-trivial energy level 
$\Emin:=\inf \Xi_E(N)$
is achieved and isolated in the energy spectrum.
\item Any potential accumulation point of $\Xi_E(N)$ must be of the form 
$\bar E=\sum_{i=0}^n E(\om_i)$ for some non-trivial harmonic spheres $\om_i$, $i=0,\ldots,n\geq 1$, and such an accumulation point must be due to the existence of harmonic maps $u_j$  with energies $\bar E\neq E(u_j)\to \bar E$ that converge to a non-trivial bubble tree. 
\end{itemize}

The lowest level which might potentially be an accumulation point of the energy spectrum is hence $2\Emin$ and so a natural starting point to develop a better understanding of $\Xi_E$, and to gain new insight into the above mentioned conjecture,
is to investigate whether energies of harmonic maps might accumulate at  $2\Emin$. 
In the present paper we exclude this possibility for $3$ manifolds under some natural assumptions on the minimal energy harmonic spheres

\begin{thm}\label{thm:E}
Let $(N,g_N)$ be any analytic $3$ manifold  and let $\Emin=\min \Xi_E(N,g_N)$ be the minimal energy of a non-trivial harmonic sphere in $N$. 
Suppose that harmonic spheres $\om:S^2\to N$ with this energy $\Emin$ are unbranched, non-degenerate critical points and that any intersection or self-intersection  between such minimal spheres is transversal in the sense that the corresponding tangent spaces do not coincide. \\
Then $2\Emin$ cannot be an accumulation point of the energy spectrum $\Xi_E(N,g_N)$. 
\end{thm}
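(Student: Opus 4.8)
I would argue by contradiction. Suppose $2\Emin$ were an accumulation point of $\ESpec$. Then, using the compactness theory and the M\"obius normalisation recalled above, there are harmonic maps $u_j:S^2\to N$ with $E(u_j)\to 2\Emin$, $E(u_j)\neq 2\Emin$, converging to a bubble tree with \emph{non-constant} base map $\om_0$. Bubbling must occur: otherwise $u_j\to u^*$ in $H^2$ for a harmonic $u^*$ with $E(u^*)=2\Emin$, and Simon's estimate \eqref{est:Simon} applied in $\UU(u^*)$ would give $|E(u_j)-2\Emin|\leq C\|\tauS(u_j)\|_{L^2}^\gamma=0$, contradicting $E(u_j)\neq2\Emin$. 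Since every non-constant harmonic sphere has energy $\geq\Emin$ and \eqref{eq:bt-convergence} loses no energy and forms no necks, the energies of $\om_0$ and of all bubbles add up to exactly $2\Emin$; hence the bubble tree consists of $\om_0$ and precisely one bubble $\om_1:\R^2\to N$, both of energy $\Emin$, with $\om_1$ concentrating at a single point $p\in S^2$ and satisfying the matching condition $\om_1(\infty)=\om_0(p)=:q$. By the hypotheses, $\om_0,\om_1$ are unbranched, non-degenerate harmonic spheres whose images meet transversally at $q$.

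The heart of the argument is a refined analysis of $u_j$ on the neck. Working in stereographic coordinates $z$ centred at $p$, with concentration parameters $p_j\to p$ and scales $\mu_j\to\infty$, the bubble-tree convergence \eqref{eq:bt-convergence} gives, near $p$,
\beq\label{eq:sketch-ansatz}
u_j(z)=\om_0(z)+\big(\om_1(\mu_j(z-p_j))-q\big)+w_j(z),\qquad w_j\to 0\ \text{in}\ L^\infty\cap H^1.
\eeq
The crucial first step is to upgrade \eqref{eq:sketch-ansatz} to a \emph{quantitative} estimate on the neck $A_j:=\{c\mu_j^{-1}<|z-p_j|<c\}$. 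After modulating the M\"obius parameters of the two profiles so as to project $w_j$ off the slowly decaying directions — possible precisely because $\om_0$ and $\om_1$ are non-degenerate, so that those directions are exhausted by the infinitesimal reparametrisations — one should obtain that $w_j$ is negligible \emph{relative to the profiles} on $A_j$, e.g.
\beq\label{eq:sketch-neck}
|w_j(z)|+|z-p_j|\,|\nabla w_j(z)|=\epssimu\cdot\Big(|z-p_j|+\mu_j^{-1}|z-p_j|^{-1}\Big)\qquad\text{on }A_j.
\eeq
In particular $|\nabla w_j|=\epssimu$ on the middle circle $\{|z-p_j|=\mu_j^{-1/2}\}$, whereas \eqref{eq:sketch-ansatz} alone only yields $\|w_j\|_{L^\infty}=\epssimu$, which on that circle is of the same size $\mu_j^{-1/2}$ as the profiles and hence far too weak. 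Proving \eqref{eq:sketch-neck} would combine $\eps$-regularity and interior elliptic estimates on the long neck with a Lyapunov--Schmidt reduction based at the modulated ansatz; this is a \Lojns-type estimate for almost harmonic maps of the sort announced in the abstract, and \emph{it is the step I expect to be the main obstacle}.

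The contradiction then comes from conformality. A harmonic map $u_j:S^2\to N$ is weakly conformal, so $\langle\partial_z u_j,\partial_z u_j\rangle\equiv 0$, where $\langle\cdot,\cdot\rangle$ is the $\C$-bilinear extension of the Euclidean inner product on $\R^N\supset N$. Let $V_0:=2\,\partial_z\om_0(p)$ and let $V_1\in\C\otimes T_qN$ be the leading coefficient of $\om_1$ at $\infty$, so that $\om_0(z)=q+\Rea\!\big(V_0(z-p)\big)+O(|z-p|^2)$ near $p$ and $\om_1(z')=q+\Rea\!\big(V_1/z'\big)+O(|z'|^{-2})$ as $z'\to\infty$; unbranchedness gives $V_0,V_1\neq0$ and conformality of $\om_0,\om_1$ gives $\langle V_0,V_0\rangle=\langle V_1,V_1\rangle=0$. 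Differentiating \eqref{eq:sketch-ansatz}, using these relations, \eqref{eq:sketch-neck}, and the Taylor expansions of $\om_0,\om_1$, one finds on $A_j$
\beq\label{eq:sketch-hopf}
0=\langle\partial_z u_j,\partial_z u_j\rangle=-\tfrac12\,\mu_j^{-1}\langle V_0,V_1\rangle\,(z-p_j)^{-2}+r_j(z),\qquad |r_j(z)|=\epssimu\cdot\mu_j^{-1}|z-p_j|^{-2}\ \text{for}\ |z-p_j|\sim\mu_j^{-1/2}.
\eeq
Integrating \eqref{eq:sketch-hopf} against $(z-p_j)\,dz$ over the circle $\{|z-p_j|=\mu_j^{-1/2}\}$: the left side vanishes, the explicit term contributes $-\pi i\,\mu_j^{-1}\langle V_0,V_1\rangle$, and the remainder contributes $\epssimu\cdot\mu_j^{-1}$, so $\langle V_0,V_1\rangle=\epssimu$ and hence $\langle V_0,V_1\rangle=0$. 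But in a $3$-manifold a non-zero null vector $V_0\in\C\otimes T_qN\cong\C^3$ satisfies: $\langle W,W\rangle=\langle V_0,W\rangle=0$ forces $W\in\C V_0$ — the quadratic form restricted to the plane $V_0^{\perp}\ni V_0$ has radical $\C V_0$ and hence no other null line. Thus $\langle V_0,V_1\rangle=0$ would force $\C V_1=\C V_0$, i.e. the tangent plane of $\om_1$ at $\infty$ to coincide with that of $\om_0$ at $p$ in $T_qN$, contradicting the assumed transversality of the intersection at $q$. This contradiction proves the theorem.

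Finally, a word on where the hypotheses and the dimension enter. Unbranchedness is exactly what guarantees $V_0,V_1\neq0$; non-degeneracy is what makes the modulation and the reduction behind \eqref{eq:sketch-neck} work; and transversality, \emph{together with} $\dim N=3$, is what forces $\langle V_0,V_1\rangle\neq0$ — for targets of dimension $\geq4$ transversality of the tangent planes no longer implies this (e.g. $V_0=(1,i,0,0)$, $V_1=(0,0,1,i)$), so the argument is genuinely three-dimensional. I would expect the two technical outputs underlying the sketch — non-existence of a harmonic map near a transversal two-bubble configuration, and a uniform estimate $|E(u)-2\Emin|\leq C\|\tauS(u)\|_{L^2}^\gamma$ on a neighbourhood of the corresponding bubble set — to be precisely the obstructions to the gluing of harmonic spheres and the \Lojns-estimates for almost harmonic maps referred to in the abstract.
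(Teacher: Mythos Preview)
There is a genuine gap. You assert that ``by the hypotheses, $\om_0,\om_1$ \ldots meet transversally at $q$'', but the hypothesis only constrains \emph{intersections and self-intersections} of minimal-energy spheres. Nothing prevents $\om_0$ and $\om_1$ from parametrising the \emph{same} minimal sphere $\Sigma$ with the same orientation, with the bubble attached at a point $q\in\Sigma$ that is not a self-intersection point; then the two tangent planes at $q$ coincide. In that case $V_1\in\C V_0$, so $\langle V_0,V_1\rangle=\lambda\langle V_0,V_0\rangle=0$ and your Hopf-differential computation yields no contradiction whatsoever --- even granting the quantitative neck estimate you flag as the main obstacle. This ``aligned'' case is not a technicality: the paper devotes a separate theorem (Theorem~\ref{thm:1}) to it, showing that any harmonic map close to such a bubble tree is in fact an exact covering $U^*\circ r$ of the underlying sphere and hence has energy \emph{equal} to $2\Emin$. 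That conclusion requires a uniform \Loj estimate on a neighbourhood of the non-compact family $\{U^*\circ r\}$, where the spectral gap of the Jacobi operator tends to zero (cf.\ Remark~\ref{rmk:spectral-gap}); it does not follow from a local neck computation, and your argument has no mechanism to produce it.

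For the remaining two cases your conformality idea is sound in outline --- opposite orientation gives $V_1\propto\overline{V_0}$ and hence $\langle V_0,V_1\rangle=\abs{dU^*(0)}^2\neq 0$, and transversal intersection in a $3$-manifold gives $\langle V_0,V_1\rangle\neq 0$ by exactly the linear algebra you describe --- but it is a genuinely different route from the paper's. The paper never uses that $u_j$ is conformal; instead it builds a finite-dimensional manifold $\ZZ$ of singularity models, proves $d^2E$ is uniformly definite orthogonal to $\ZZ$ (Lemma~\ref{lemma:positive-def-2ndvar}), and for each $\zz\in\ZZ$ identifies a tangent direction $y_\zz$ along which $dE(\zz)(y_\zz)$ dominates $\norm{dE(\zz)}_*^2$ (Lemmas~\ref{lemma:energy-expansion-rational}--\ref{lemma:energy-expansion-tension}). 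This buys strictly more: it yields quantitative \Loj estimates and bubble-scale bounds for \emph{almost}-harmonic maps (Theorems~\ref{thm:2} and~\ref{thm:3}), whereas your argument needs $\langle\partial_z u_j,\partial_z u_j\rangle\equiv 0$ exactly. Your acknowledged obstacle --- the quantitative neck estimate --- is effectively replaced in the paper by the combination of the definiteness lemma and Lemma~\ref{lemma:first step}, which together control $\norm{u_j-\zz}_\zz$ by $\norm{dE(u_j)}_*+\norm{dE(\zz)}_*$ without any pointwise improvement on the neck.
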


As the energy is invariant under conformal changes we know that any vector field of the form $Y=\ddeps \,\om\circ M_\eps$, $M_\eps$ a family of M\"obius transforms with $M_{\eps=0}=\Id$, is 
 a Jacobi field along $\om$, i.e.~so that $d^2E(\om)(Y,\cdot)=0$. 
 We hence say that $\om$ is non-degenerate if all Jacobi-fields along $\om$ are generated in this way.

We recall that the results of Gulliver, Osserman and Royden \cite{GOR} ensure that minimial surfaces in $3$-manifolds that are locally area minimising cannot have any true branch points. Any harmonic sphere with energy $\Emin$ that is a local area minimiser will hence automatically be unbranched. Of course the example of an equatorial sphere in $S^3$ shows that harmonic spheres of minimal energy might not be local minimisers of the Area and to the author's knowledge it is not known whether there any minimal energy harmonic spheres in a  $3$ manifold that are branched.

The above theorem is valid also for manifolds which are not analytic but for which it is known that prime harmonic spheres of energy $2\Emin$ are non-degenerate critical points. 

As the bumpy metric theorems obtained by White in \cite{White-bumpy-1, White-bumpy-2, White-transversal} and by Moore in \cite{Moore} and in Theorems 5.1.1 and 5.1.2 of \cite{Moore-book}, ensure that for generic metrics on manifolds of dimension at least $3$ all prime harmonic maps are non-degenerate critical points, not branched and that all potential intersections and self-intersections are transverse we hence obtain

\begin{cor}
For generic $3$ manifolds the possibility that
$2\Emin$ is an accumulation point of the energy spectrum $\Xi_E(N)$ is excluded and \Lojns-estimates as stated in Theorems \ref{thm:1}, \ref{thm:2} and \ref{thm:3} below hold true for any sequence of almost harmonic maps that converges to a bubble tree with a single bubble. 
\end{cor}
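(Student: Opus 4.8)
The plan is to obtain the corollary by combining Theorem~\ref{thm:E} (for the first assertion) and Theorems~\ref{thm:1}--\ref{thm:3} (for the second) with the bumpy metric theorems of White \cite{White-bumpy-1, White-bumpy-2, White-transversal} and of Moore \cite{Moore, Moore-book}: no new analysis is involved, and the task is only to verify that for a generic metric the structural hypotheses those theorems require are automatically in force for the harmonic spheres that can occur.

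First I would record the genericity input in the precise form needed. By Theorems~5.1.1 and 5.1.2 of \cite{Moore-book} (see also \cite{Moore, White-bumpy-1, White-bumpy-2}), on a manifold $N$ of dimension at least $3$ there is a Baire-residual set $\mathcal G$ of metrics such that for every $g\in\mathcal G$ every \emph{prime} (somewhere injective) harmonic sphere is free of branch points and is a non-degenerate critical point of $E$ in the sense spelled out after Theorem~\ref{thm:E}, i.e.\ every Jacobi field along it is generated by an infinitesimal M\"obius transformation; after shrinking $\mathcal G$ one may assume in addition, using \cite{White-transversal}, that for $g\in\mathcal G$ any two distinct such spheres, and any such sphere with itself at two distinct preimages, meet transversally. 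Two routine reductions enter here: one passes between finite-energy harmonic maps $\R^2\to N$ and harmonic maps $S^2\to N$ via the correspondence recalled in the introduction and justified by \cite{SU}, so that bubbles may be treated as harmonic spheres; and one notes that a harmonic sphere $\om$ of least energy $\Emin$ is automatically prime, since a factorisation $\om=v\smallcirc\ph$ through a nonconstant holomorphic or antiholomorphic branched cover $\ph$ of degree $k\geq 2$ would give $E(v)=k^{-1}\Emin\leq\thalf\Emin$, impossible for a nonconstant harmonic sphere.

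With $\mathcal G$ in hand the first assertion is immediate: for $g\in\mathcal G$ every harmonic sphere of energy $\Emin$ is an unbranched, non-degenerate critical point and all of its intersections and self-intersections are transversal, so the hypotheses of Theorem~\ref{thm:E} hold and $2\Emin$ is not an accumulation point of $\Xi_E(N,g)$. For the second assertion, given a sequence of almost harmonic maps converging to a bubble tree with a single bubble, the normalisation described in the introduction reduces us to a nonconstant harmonic base sphere $\om_0$ together with a single nonconstant harmonic bubble $\om_1$; the hypotheses of Theorems~\ref{thm:1}--\ref{thm:3} are precisely the structural conditions on $\om_0$ and $\om_1$ (no branch points, non-degeneracy, transversality of the relevant intersections), which membership of $g$ in $\mathcal G$ supplies whenever $\om_0,\om_1$ are prime, the three theorems accounting between them for the remaining configurations of $\om_0$ and $\om_1$. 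Hence the relevant \Lojns-estimate applies.

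The main obstacle is not in this assembly but in checking that the off-the-shelf bumpy metric statements really deliver the hypotheses in the form used here. The points requiring care are: reconciling ``non-degenerate'' in the sense of \cite{Moore-book} with the conformal-Jacobi-field definition adopted after Theorem~\ref{thm:E}; keeping the minimal surface and parametrised harmonic map viewpoints, and the two models $\hat\C$ and $\R^2$, consistent throughout; and intersecting the at most countably many residual sets produced by the separate genericity theorems so that a single $\mathcal G$ simultaneously controls branch points, non-degeneracy and transversality for \emph{all} prime harmonic spheres at once. The one genuinely delicate step is the possibility that the base map or the bubble of a one-bubble tree is a multiple cover: such a map is a degenerate critical point and self-intersects everywhere, so one must verify that this configuration falls under whichever of Theorems~\ref{thm:1}--\ref{thm:3} imposes no non-degeneracy or self-transversality requirement on that factor.
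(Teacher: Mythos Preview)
Your approach is the same as the paper's: the corollary is obtained by invoking the bumpy metric theorems of White and Moore to place the metric in a residual set $\mathcal G$, and then applying Theorem~\ref{thm:E} for the first assertion and Theorems~\ref{thm:1}--\ref{thm:3} for the second. The paper's own argument is the single sentence preceding the corollary, so your more detailed unpacking---in particular the observation that a harmonic sphere of energy $\Emin$ is automatically prime---is appropriate and correct.

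You are right to flag the multiple-cover case as the one delicate point, but your proposed resolution at the end is not accurate. You suggest that when $\om_0$ or $\om_1$ is a multiple cover one should ``verify that this configuration falls under whichever of Theorems~\ref{thm:1}--\ref{thm:3} imposes no non-degeneracy or self-transversality requirement on that factor.'' In fact none of the three theorems is free of such a requirement: Theorems~\ref{thm:1} and~\ref{thm:2} assume that $\om_i=U^*\circ q_i^*$ is non-degenerate in the sense of Definition~\ref{def:non-deg}, and Theorem~\ref{thm:3} assumes \eqref{ass:eq-Jabobi} whenever $\deg q_i^*\geq 2$; moreover, when the prime factor $U_i^*$ is non-degenerate in the M\"obius sense these two conditions coincide (since then $\{W\circ q_i^*: W\in\ker L_{U_i^*}\}$ is already contained in the tangent space to $\{U_i^*\circ q\}$). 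So what must actually be supplied is the statement that, for generic metrics on a $3$-manifold, a multiple cover $U^*\circ q^*$ of a non-degenerate, unbranched prime harmonic sphere $U^*$ has no Jacobi fields beyond those generated by varying the rational map $q^*$. The paper does not spell this out any further than you do---it simply points to the cited references, in particular Moore's Theorems 5.1.1 and 5.1.2---but your last sentence misidentifies where the remaining work lies.
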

We remark that while standard arguments show that the \Lojns-estimate \eqref{est:Simon} holds true (with $\gamma=2$) in an $\eps=\eps(u^*)>0$ neighbourhood of any non-degenerate harmonic map $u^*$ into a smooth manifold, these arguments cannot be used to obtain the above result as sequences of harmonic maps undergoing bubbling will not be in a neighbourhood of a fixed such map.


%

To prove Theorem \ref{thm:E} and the above corollary we need to exclude the possibility that there exists a sequence of harmonic maps $u_j:S^2\to N$ with energy $2\Emin\neq E(u_j)\to 2\Emin$. We know that such a sequence cannot converge strongly as that would contradict \cite{Simon}, so (after pull-back by suitable M\"obius tranforms) must converge to a bubble tree with a base map $\om_0$ and a bubble $\om_1$ of energy $E(\om_0)=E(\om_1)=\Emin$.

We hence need to ask whether it is possible to glue increasingly concentrated harmonic spheres onto harmonic base maps in a way that results in a harmonic map whose energy is close, but not equal, to $E(\om_0)+E(\om_1)$. 
We will not only exclude this for harmonic spheres as considered in Theorem \ref{thm:E}, but will establish obstructions to gluing harmonic spheres that apply in more general settings, including situations where the involved harmonic spheres are branched and have non-trivial, and even non-integrable, Jacobi-fields. 

It is natural to distinguish between bubble tree for which the bubble $\om_1$ and base $\om_0$ 
\begin{enumerate}[topsep=0pt]
\item parametrise the same minimal surface with the same orientation \label{case:I}
\item  parametrise the same minimal surface with the opposite orientation\label{case:II}
\item \label{case:III} parametrise transversally intersecting minimal surfaces.
\end{enumerate}
We note that the no-neck property of the convergence to a bubble tree ensures that we only have to consider maps $\om_0$ and $\om_1$ for which $\om_0(p)=\om_1(\infty)$, $p$ the point at which the bubble forms. After rotating the domain, we can assume without loss of generality that the bubble is attached at the north pole,  so in stereographic coordinates at $z=0$. 
In the following we  hence only have to consider maps $u_j:\Chat\to N$ for which there exist $\mu_j\to \infty$ and $z_j\to 0$ for which 
\beq
\label{eq:bt-conv-2}
u_j(z)-\om_0(z)-[\om_1(\mu_j(z-z_j))-\om_1(\infty)]\to 0 \text{ in }\dot H^1\cap L^\infty(\C).
\eeq 
Our analysis is not restricted to bubble trees for which $\om_0$ and $\om_1$ are unbranched  but we can more generally  consider bubble trees with base maps and bubbles which are obtained as a composition of a harmonic sphere $U^*:\Chat\to N$ with $dU^*(q^*(0))\neq 0$ and a rational map $q^*:\Chat\to \Chat$ of arbitrary degree, i.e.~a map of the form $q^*(z)=\frac{p_1(z)}{p_2(z)}$, $p_{1,2}$ polynomials. As we can precompose $U^*$ with a rotation we can restrict our attention to rational maps $q^*(z)$ which map $0$ to $0$ and so will always ask that $dU^*(0)\neq 0$. 

We note that the conformal invariance of the energy 
ensures that any map of the form $U\circ q$, $U$ harmonic and $q$ rational, will be a harmonic map and that any vector field of the form $Y=\ddeps (U\circ q_\eps)$, $q_\eps$ rational maps,  is a Jacobi field along $U\circ q_{\eps=0}$. For higher order coverings of harmonic spheres it is hence natural to say that 
\begin{defn}
\label{def:non-deg}
The second variation of the energy is non-degenerate along a harmonic map 
$\om$ of the form $\om=U\circ q$ if every Jacobi-field $Y$ along $\om$ is generated by a variation of rational maps $q_{\eps}$, i.e.~given by $Y=\ddeps (U\circ q_\eps)$.
\end{defn}
We first consider the case where the base and the bubble parametrise the same minimal surface with the same orientation.
Here we include settings in which $U^*(0)$ is a point of higher multiplicity of an immersed minimal sphere 
$ U^*(\Chat)$ provided $\om_0$ and $\om_1(\frac{1}{z})$ parametrise the same leaf of $U^*(\Chat)$ near $z=0$, but will instead include settings in which they parametrise transversally intersecting leafs in the third case, compare Theorem \ref{thm:3} below. 

So suppose that $\om_0$ and  $\om_1$ are obtained by composing the same harmonic  $U^*:\Chat\to N$ with rational maps $q_0^*(z)$ and $q_1^*(\frac{1}{z})$ for which $q_0^*(0)= q_1^*(0)=0$. 
We want to show that the only sequences of harmonic maps $u_j$ that converge to such a bubble tree are higher degree coverings of  $U^*:S^2\to N$.

This does not follow from the existing theory since 
the size of the neighbourhoods on which \Lojns-estimates are known to hold true shrink to zero as the maps undergo bubbling, compare Remark \ref{rmk:spectral-gap}.
If $u_j$ is a sequence of harmonic maps that converges to such a bubble tree then the compactness theory from  \cite{Struwe1985, DT, QingTian, LinWang} ensures that 
 $\norm{u_j-U^*\circ r_j}_{L^\infty \cap \dot H^1}\to 0$ for some rational 
  maps $r_j$. However the existing theory does not provide the quantitative estimates on the rate of this convergence which would allow one to know that the maps  $u_j$ are in the smaller and smaller neighbourhoods $\UU(U^*\circ r_j)$ for which the results of Simon apply.

Conversely, our method allows us to prove that \Lojns-estimates indeed hold true on a \textit{uniform} $\eps$ \text{neighbourhood} around such a non-compact set of harmonic maps $\{U^*\circ r\}$ and hence that such sequences of harmonic maps cannot be responsible for an accumulation point of the energy spectrum. To be more precise, we show

\begin{thm}\label{thm:1}
Let $ (N,g_N)$ be a smooth Riemannian manifold of any dimension. Suppose that $U^*:\Chat \to N$ is a harmonic map with $dU^*(0)\neq 0$ and ${q_0^*},{q_1^*}:\Chat \to \Chat$ are rational maps with $q_1^*(0)=q_0^*(0)=0$ so that 
$d^2E$  is non-degenerate at 
$\om_0(z):=U^*\circ \rmbm^*$ and $\om_1(z):= U^*\circ \rmbb^*(\frac1{ z})$ in the sense of Definition \ref{def:non-deg}.
\\
Let  $(u_j)$ be any sequence of almost harmonic maps which converges to a bubble tree with base map $\om_0$ and bubble $\om_1$ as described in \eqref{def:almost-harmonic} and \eqref{eq:bt-conv-2}. Then there exists a constant $C$ so that
\beq
\label{claim:Loj-1-rmk}
\abs{E(u_j)-[E(\om_0)+E(\om_1)] }\leq C \min(\norm{dE(u_j)}_*,\log\mu_j \norm{dE(u_j)}_*^2),\eeq
 for all sufficiently large $j$  and for $\mu_j\to \infty$ as in \eqref{eq:bt-conv-2}. 
 \\
Furthermore, if the maps $u_j$ are harmonic, 
then, after precomposing with a suitable rotation $R_j$ of the domain, they can be written as
$$u_j(z)=U^* \big(q_0^j(z)+q_1^j(\tfrac{1}{\mu_j z})\big)$$
for rational maps $q_i^j$  which converge smoothly to $q_i^*$ when viewed as maps from $S^2$ to $S^2$.
\end{thm}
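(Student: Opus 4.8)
The plan is to set up a Lyapunov--Schmidt-type reduction adapted to the bubbling geometry. First I would construct, for each large $\mu$, an explicit family of approximate solutions of the form $\Phi_{\mu}(q_0,q_1)(z):=U^*\big(q_0(z)+q_1(\tfrac1{\mu z})\big)$ indexed by pairs of rational maps $(q_0,q_1)$ close to $(q_0^*,q_1^*)$, and compute the tension field $\tauS(\Phi_\mu(q_0,q_1))$. Because $U^*$ is genuinely harmonic, the only source of error is the interaction between the base part $q_0$ and the bubble part $q_1(\tfrac1{\mu z})$ on the neck region $\{|z|\sim \mu^{-1/2}\}$; the standard no-neck estimates together with $dU^*(0)\neq 0$ should give $\norm{\tauS(\Phi_\mu(q_0,q_1))}_{L^2}=O(\mu^{-\alpha})$ for some $\alpha>0$, with analogous bounds on $dE(\Phi_\mu)$ in the dual norm. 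The second step is to decompose a given almost harmonic map $u_j$ (equivalently, a harmonic $u_j$) as $u_j=\Phi_{\mu_j}(q_0^j,q_1^j)+v_j$ where $v_j$ is $L^2$-orthogonal to the tangent space of the approximate-solution manifold at $\Phi_{\mu_j}(q_0^j,q_1^j)$; the convergence \eqref{eq:bt-conv-2} guarantees that near a bubble tree such a decomposition exists with $\norm{v_j}_{H^1\cap L^\infty}\to 0$ and $(q_0^j,q_1^j)\to(q_0^*,q_1^*)$ in $C^\infty(S^2,S^2)$.

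Next I would exploit the non-degeneracy hypothesis of Definition \ref{def:non-deg}. The claim is that $d^2E(\Phi_{\mu_j}(q_0^j,q_1^j))$ restricted to the $L^2$-orthogonal complement of the tangent space to $\{\Phi_{\mu_j}(q_0,q_1)\}$ is uniformly coercive, with a lower bound $\geq c/\log\mu_j$ on that complement (the logarithmic loss coming from the conformal/scaling direction, exactly the obstruction flagged in Remark \ref{rmk:spectral-gap}). This is the analytic heart of the argument: one must rule out approximate kernel of the linearised operator beyond the rational-map directions, using that the Jacobi fields along $\om_0$ and $\om_1$ are all generated by variations of rational maps, plus a quantitative spectral-perturbation estimate showing the spectral gap of the glued map degrades no faster than $1/\log\mu_j$. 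Combining coercivity with the error estimate from the first step, a Taylor expansion of $E$ around $\Phi_{\mu_j}(q_0^j,q_1^j)$ along $v_j$ yields $\norm{v_j}_{H^1}\lesssim \log\mu_j\,\norm{dE(u_j)}_*$ and then the energy-gap estimate \eqref{claim:Loj-1-rmk} by inserting this back into the expansion of the energy difference $E(u_j)-E(\Phi_{\mu_j}(q_0^j,q_1^j))$ together with $E(\Phi_{\mu_j}(q_0^j,q_1^j))=E(\om_0)+E(\om_1)+O(\mu_j^{-\alpha})$; the two terms in the minimum arise from the two ways of estimating the quadratic remainder.

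For the final (rigidity) statement, suppose the $u_j$ are harmonic, so $dE(u_j)=0$. The estimate just obtained forces $v_j\equiv 0$ for $j$ large, i.e. $u_j=\Phi_{\mu_j}(q_0^j,q_1^j)=U^*\big(q_0^j(z)+q_1^j(\tfrac1{\mu_j z})\big)$ exactly; alternatively one argues that the reduced function $(q_0,q_1)\mapsto E(\Phi_{\mu_j}(q_0,q_1))$ is a real-analytic function on a finite-dimensional space of rational maps whose critical points near $(q_0^*,q_1^*)$ must be genuine harmonic maps, and then a separate \Lojns/analyticity argument in this finite-dimensional reduced setting (or direct uniqueness of the orthogonal correction) closes the gap. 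The convergence $q_i^j\to q_i^*$ in $C^\infty(S^2,S^2)$ then follows from the $C^\infty_{loc}$ convergence in \eqref{eq:bt-conv-2} away from the node together with elliptic estimates for harmonic maps and the fact that rational maps of bounded degree form a compact family once the images stay in a fixed compact region. I expect the main obstacle to be the uniform, quantitatively-controlled coercivity of the linearisation on the orthogonal complement --- establishing that the only small eigenvalues of the glued operator are those produced by the explicit conformal/rational directions, with the precise $1/\log\mu_j$ rate, rather than the (comparatively routine) gluing error estimates or the final compactness argument.
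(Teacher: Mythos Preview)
Your proposal has a genuine gap, and it is precisely the one the paper isolates in Remark~\ref{rmk:spectral-gap}. Your approximate-solution manifold $\{\Phi_\mu(q_0,q_1)=U^*(q_0+q_1(\tfrac1{\mu\cdot}))\}$ consists of \emph{exact} harmonic maps, which is pleasant, but orthogonal to it the second variation does have an eigenvalue of order $1/\log\mu$. (Incidentally, this small eigenvalue does \emph{not} come from the scaling direction --- scaling $\mu\mapsto\mu(1+\eps)$ is a variation of $q_1$ and hence already tangent to your family --- but from the direction of translating the bubble's copy of $U^*$ independently of the base's copy, i.e.\ making $U_0(0)\neq U_1(0)$; this is the quantity the paper calls $\delta$.) You accept this degenerating coercivity and try to work with it, but the Taylor-expansion step you invoke to get $\norm{v_j}\lesssim\log\mu_j\,\norm{dE(u_j)}_*$ requires absorbing the cubic remainder $O(\norm{v_j}^3)$ into the quadratic term $(c/\log\mu_j)\norm{v_j}^2$, and this only works if you already know $\norm{v_j}\ll 1/\log\mu_j$. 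Bubble-tree convergence gives merely $\norm{v_j}\to 0$ with no rate, so the argument does not close. The same obstruction blocks the rigidity claim: for harmonic $u_j$ you would conclude only that either $v_j=0$ or $\norm{v_j}\gtrsim 1/\log\mu_j$, and you cannot rule out the second alternative.

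The paper's remedy is to \emph{enlarge} the singularity-model manifold $\ZZ$ so that the $\delta$-direction becomes tangent to it: one allows the base and bubble to be built from independent translates $U_i=U^*(\cdot+c_i)$, glued with a carefully designed transition (not simply $U^*(q_0+q_{1,\mu})$). Orthogonal to this larger $\ZZ$ the second variation is \emph{uniformly} definite (Lemma~\ref{lemma:positive-def-2ndvar}), so a fixed $\eps$-smallness of $w=u-\zz$ suffices to run the coercivity argument and get $\norm{w}\lesssim\norm{dE(u)}_*+\norm{dE(\zz)}_*$. The price is that elements of $\ZZ$ with $\delta\neq 0$ are no longer harmonic; the additional ingredient (Lemmas~\ref{lemma:first step} and~\ref{lemma:energy-expansion-rotation}) is to exhibit a specific tangent direction $y_\zz\in T_\zz\ZZ$ along which $dE(\zz)(y_\zz)\gtrsim\delta/\log\mu$ while $\norm{dE(\zz)}_*$ and $\norm{d^2E(\zz)(y_\zz,\cdot)}_*$ are suitably small, which forces $\delta/\log\mu\lesssim\norm{dE(u)}_*$. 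For harmonic $u_j$ this gives $\delta=0$, hence $\zz=U^*(q_0+q_{1,\mu})$ exactly and $dE(\zz)=0$, and then the uniform estimate yields $w=0$.
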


Here and in the following $\norm{\cdot}_*$ denotes a weighted $H^{-1}$ norm which is defined in Remark \ref{rmk:def-dual-norm} and which is chosen so that the above estimate \eqref{claim:Loj-1-rmk} is invariant under pull-back by M\"obius transforms. We note that  
$\norm{\tauS(u)}_{L^2(S^2,g_{S^2})}$, which dominates $\norm{dE(u)}_*$, does not have this property. 
However while it is natural work with $\norm{dE(u)}_*$ 
to analyse the energy spectrum,
it also of interest to obtain $L^2$-\Lojns-estimates e.g. to analyse the asymptotic behaviour of the harmonic map heat flow 
\beq\label{def:HMF}
\partial_t u=\tau_{g_{S^2}}(u).
\eeq
We shall hence also prove
\begin{cor}
\label{cor:thm1}
For sequences of maps $u_j$ as in Theorem \ref{thm:1} we can bound 
\beq
\label{claim:thm1-Loj}
\abs{E(u_j)-E^*}\leq C \norm{\tau_{g_{S^2}}(u)}_{L^2(S^2,g_{S^2})}^2. \eeq
\end{cor}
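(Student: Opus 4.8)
The plan is to deduce Corollary~\ref{cor:thm1} from the estimate~\eqref{claim:Loj-1-rmk} of Theorem~\ref{thm:1} together with a single comparison between the weighted norm $\norm{dE(u_j)}_*$ and the $L^2$-norm of the tension field, namely that $(\log\mu_j)^{1/2}\norm{dE(u_j)}_*\le C\norm{\tauS(u_j)}_{L^2(S^2,g_{S^2})}$ for all sufficiently large $j$. Granting this, write $\delta_j:=\norm{dE(u_j)}_*$ and $T_j:=\norm{\tauS(u_j)}_{L^2(S^2,g_{S^2})}$, and distinguish two cases. If $\log\mu_j\,\delta_j\le 1$ then the minimum in~\eqref{claim:Loj-1-rmk} equals $\log\mu_j\,\delta_j^{2}=\bigl((\log\mu_j)^{1/2}\delta_j\bigr)^{2}\le C^{2}T_j^{2}$, which is~\eqref{claim:thm1-Loj}. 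If instead $\log\mu_j\,\delta_j>1$ then the minimum equals $\delta_j$ and $\delta_j>(\log\mu_j)^{-1}$, so that $(\log\mu_j)^{-1/2}<(\log\mu_j)^{1/2}\delta_j\le C T_j$ and hence $\delta_j\le(\log\mu_j)^{-1/2}\cdot C T_j\le C^{2}T_j^{2}$, again giving~\eqref{claim:thm1-Loj}. (Here $E^{*}=E(\om_0)+E(\om_1)$, and $\mu_j\to\infty$ so that $\log\mu_j>0$ for $j$ large.) Everything therefore reduces to the displayed comparison.

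To prove that comparison one revisits the proof of Theorem~\ref{thm:1}, since the mechanism behind it is exactly the one that produces the factor $\log\mu_j$ there. Working in stereographic coordinates centred at the point where the bubble forms, decompose $S^2$ into an outer region $\{\abs{z}\gtrsim 1\}$ on which $u_j$ is close to $\om_0$, an inner region $\{\abs{z}\lesssim\mu_j^{-1}\}$ on which the rescaled map is close to $\om_1$, and a neck $\{\mu_j^{-1}\lesssim\abs{z}\lesssim 1\}$, which is a cylinder of conformal modulus $\asymp\log\mu_j$. On the outer region and on the (rescaled) inner region the weight entering the definition of $\norm{\cdot}_*$ in Remark~\ref{rmk:def-dual-norm} is of order one, so there $\norm{\cdot}_*$ and the $L^{2}$-norm of $\tauS(u_j)$ are comparable and no logarithmic factor can appear. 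The loss occurs only in the finitely many near-kernel directions of the second variation $d^{2}E$ along the bubble tree, namely the directions generated by translating and rescaling the bubble, whose associated Jacobi-type fields are harmonic on the neck and hence, after the cut-offs inherent in the bubble-tree construction, are only controlled up to an error of size $(\log\mu_j)^{-1}$. Splitting $\tauS(u_j)$ by the $L^{2}$-orthogonal projection onto the span of these modes, one bounds the transverse part in $L^{2}$ by $\delta_j$ with the sharp exponent (using the non-degeneracy hypotheses of Theorem~\ref{thm:1}, which ensure that apart from these explicitly identified modes $d^{2}E$ is uniformly coercive), while for the component along the near-kernel modes the standard estimate on a cylinder of length $\asymp\log\mu_j$ — where an $H^{1}$-normalised test field that pairs efficiently against $dE(u_j)$ must carry $L^{2}$-mass of order $(\log\mu_j)^{1/2}$ — gives the reverse bound $\norm{\tauS(u_j)}_{L^{2}}\gtrsim(\log\mu_j)^{1/2}\delta_j$ for that part. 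Adding the two contributions yields the comparison.

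The main obstacle is the bookkeeping in this last step: one must identify precisely which modes carry the $(\log\mu_j)$-deficiency and check, using the non-degeneracy assumptions, that no further genuine kernel of $d^{2}E$ intervenes; one must control the cross-terms between the neck, inner, and outer contributions generated by the bubble-tree decomposition~\eqref{eq:bt-conv-2} and by the cut-offs; and, above all, one must verify that the weight of Remark~\ref{rmk:def-dual-norm} degenerates on the neck by a factor of order \emph{exactly} $\log\mu_j$ and no more, since it is this matching of powers that makes~\eqref{claim:thm1-Loj} come out with the sharp exponent $\gamma=2$ rather than with a spurious logarithm. Since this is the same matching that underlies the two alternatives in~\eqref{claim:Loj-1-rmk}, the verification is best carried out alongside the proof of Theorem~\ref{thm:1} rather than separately.
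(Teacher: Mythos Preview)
Your reduction to the comparison $(\log\mu_j)^{1/2}\norm{dE(u_j)}_*\le C\norm{\tauS(u_j)}_{L^2(S^2)}$ is clean, but that comparison is \emph{false}. Recall that $\norm{dE(u)}_*=\sup\{dE(u)(w):\norm{w}_\zz=1\}$ and that, since $\abs{\na\pi}^2\le\rho_\zz^2$, one always has $\norm{w}_{L^2(S^2)}\le\norm{w}_\zz$; hence the only general relation is $\norm{dE(u)}_*\le\norm{\tauS(u)}_{L^2(S^2)}$, with no extra $(\log\mu)^{-1/2}$. For a concrete obstruction, take $u_j$ equal to a genuinely harmonic singularity model $U^*\!\big(q_0(\cdot)+q_1(\tfrac{1}{\mu_j\cdot})\big)$ plus a small perturbation supported on $\{\abs{z}\sim 1\}$, far from the neck. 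Then the tension is supported where $\rho_\zz\sim\abs{\na\pi}$, so $\norm{dE(u_j)}_*\sim\norm{\tauS(u_j)}_{L^2(S^2)}$ and your inequality fails by a factor $(\log\mu_j)^{1/2}$. Your heuristic that ``an $H^1$-normalised test field \dots must carry $L^2$-mass of order $(\log\mu_j)^{1/2}$'' is in the wrong direction: no $\zz$-unit test field has $L^2(S^2)$-norm exceeding $1$.

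The paper does \emph{not} deduce the corollary from \eqref{claim:Loj-1-rmk}. It goes back to the two intermediate estimates established in the proof of Theorem~\ref{thm:1}: \eqref{est:proof-1-2}, namely $\abs{E(u)-E^*}\leqs\frac{\de^2}{\log\mu}+\norm{dE(u)}_*^2$, and \eqref{est:proof-1-3-2}, namely $\frac{\de}{\log\mu}\leqs dE(u)(y_\zz)+\norm{dE(u)}_*^2$, for the \emph{specific} unit direction $y_\zz\in T_\zz\ZZ$ obtained by translating the bubble map $U_1$. The new input is that this particular $y_\zz$, being concentrated near the bubble point, has \emph{small} $L^2(S^2)$-norm: $\norm{y_\zz}_{L^2(S^2)}\leqs(\log\mu)^{-1}f_\mu^{-1}$, see \eqref{est:norm-yz}. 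Pairing $dE(u)$ against $y_\zz$ via Cauchy--Schwarz in $L^2(S^2)$ (not via $\norm{\cdot}_*$) then gives $dE(u)(y_\zz)\leqs(\log\mu)^{-1}\norm{\tauS(u)}_{L^2}$, which combined with \eqref{est:proof-1-3-2} and $\norm{dE(u)}_*\leqs\norm{\tauS(u)}_{L^2}$ yields $\frac{\de^2}{\log\mu}\leqs\norm{\tauS(u)}_{L^2}^2$; inserting this into \eqref{est:proof-1-2} gives \eqref{claim:thm1-Loj}. In short, the logarithmic gain is \emph{not} a property of $\norm{dE(u)}_*$ but of the single pairing $dE(u)(y_\zz)$, and one must keep the finer structure $\frac{\de^2}{\log\mu}+\norm{dE(u)}_*^2$ rather than collapse everything into $\min(\norm{dE(u)}_*,\log\mu\,\norm{dE(u)}_*^2)$.
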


A consequence of this corollary is that solutions $u(t)$ of the flow \eqref{def:HMF} for which sequences $u(t_j)$, $t_j\to \infty$, are known to subconverge to a bubble tree as considered 
 in Theorem \ref{thm:1}
must indeed converge exponentially fast to a unique base map $u_\infty:S^2\to N$ as $t\to \infty$ away from a unique point $p_\infty\in S^2$ in the sense of both $L^2(S^2,g_{S^2})$ and $C^k_{loc}(S^2\setminus \{ p_\infty\})$.

In the one bubble case this extends results that were previously obtained by Topping \cite{Topping-aligned} in the case where $N$ is the round $2$-sphere and that were later extended by Liu and Yang in \cite{Kaehler} to compact Kähler manifolds with nonnegative holomorphic bisectional curvature. We note that the results from \cite{Topping-aligned, Kaehler} are not restricted to the one bubble case but apply to sequences of maps and solutions of the flow which converge to any bubble tree for which the base map and the bubbles are all parametrisations of the same minimal spheres with the same orientation. 

The above corollary suggest that while the special geometric structure of the target is crucially used in the proofs of \cite{Topping-aligned, Kaehler}, analogue results might be valid for more general targets and 
 one would expect that the main property needed  is  the non-degeneracy of the second variation of the energy at the underlying harmonic spheres $U^*\circ q_i^*$, which for $N=S^2$ follows from \cite{Gulliver-White}.

Next we want to show that it is impossible to glue a highly concentrated bubble onto a base map which parametrises the same minimal surface, but with opposite orientation. Indeed, we will prove more generally that for almost harmonic maps that converge to such a bubble tree the energy defect and the bubble scale are controlled in terms of $\norm{dE(u)}_*$.

\begin{thm}\label{thm:2}
Let $ (N,g_N)$ be a smooth Riemannian manifold of any dimension, let $U^*:\Chat \to N$ be any harmonic map with $d U^*(0)\neq 0$ and let $q_0^*,q_1^*:\Chat \to \Chat$ be any rational maps with ${q_1^*}(0)={q_0^*}(0)=0$ so that 
so that 
$d^2E$  is non-degenerate at 
$\om_0(z):=U^*\circ \rmbm^*$ and $\om_1(z):= U^*\circ \rmbb^*(\frac1{\bar z})$ in the sense of Definition \ref{def:non-deg}.

Let  $(u_j)$ be any sequence of almost harmonic maps  which converges to a bubble tree with base map $\om_0$ and 
 bubble $\om_1$ as described in \eqref{def:almost-harmonic} and \eqref{eq:bt-conv-2}. Then the bubble scale must satisfy 
$$\mu_j^{-\min(\dps,\dqs)}\leq C\norm{dE(u_j)}_*,$$
 $n_i^*\in \N$ the order of the zero of $q_i^*$ at $z=0$, while the energy defect is controlled by 
 $$\abs{E(u_j)-[E(\om_0)+E(\om_1)]}\leq C\norm{dE(u_j)}_*.
 $$
In particular, no sequence of harmonic maps can converge to such a bubble tree. 
\end{thm}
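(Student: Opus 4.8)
\textbf{Proof proposal for Theorem \ref{thm:2}.}

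The plan is to set up a finite-dimensional reduction (Lyapunov--Schmidt type) around the non-compact family of approximate solutions obtained by gluing a bubble $\om_1$ at scale $\mu$ and position $a$ onto the base $\om_0$. Concretely, for parameters $\mu$ large and $a$ small I would write an ansatz $v_{\mu,a}$ that interpolates between $\om_0(z)$ on the scale-one region and $\om_1(\mu(z-a))$ on the bubble region, using a cut-off near $|z|\sim \mu^{-1/2}$ as in the standard bubble-tree constructions, and then consider the nonlinear map $u\mapsto \tauS(u)$ restricted to a neighbourhood of $v_{\mu,a}$. The non-degeneracy hypothesis (Definition \ref{def:non-deg}) guarantees that, modulo the finite-dimensional space of Jacobi fields coming from reparametrising $q_0^*$ and $q_1^*$ and from varying $(\mu,a)$, the linearised operator $L_{\mu,a}=d\tauS(v_{\mu,a})$ is uniformly invertible on the orthogonal complement; the point is that this invertibility holds with a bound that is \emph{uniform} in $\mu$ (up to harmless $\log\mu$ factors absorbed into the weighted norm $\norm{\cdot}_*$), which is exactly what the existing theory of \cite{Simon} fails to provide. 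Given this, the almost-harmonic map $u_j$ decomposes as $u_j = v_{\mu_j,a_j} + w_j + (\text{Jacobi part})$ with $\norm{w_j}$ controlled by $\norm{\tauS(u_j)}$ plus the error $\norm{\tauS(v_{\mu_j,a_j})}$ of the approximate solution.

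The crux is then a careful computation of the error term $\tauS(v_{\mu,a})$ and, more importantly, of the reduced functional $\mathcal{E}(\mu,a):=E(v_{\mu,a})$ expanded in $\mu^{-1}$. In the same-orientation case of Theorem \ref{thm:1} the leading interaction term between bubble and base vanishes (the relevant pairing is of a holomorphic quantity against itself and has a sign/cancellation that reflects the existence of the genuine higher-degree harmonic covering $U^*\circ(q_0^j+q_1^j(1/\mu z))$). In the \emph{opposite}-orientation case the bubble is $U^*\circ q_1^*(1/\bar z)$ — antiholomorphic where the base is holomorphic — so the same interaction integral does \emph{not} cancel: its leading term is of order $\mu^{-\min(\dps,\dqs)}$ with a nonzero coefficient, where $n_i^*$ is the vanishing order of $q_i^*$ at $0$. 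This produces a genuine ``force'' in the $\mu$-direction of the reduced problem. Quantitatively, one gets $|\partial_\mu \mathcal{E}(\mu,a)| \gtrsim \mu^{-1-\min(\dps,\dqs)}$ (or the analogous statement after the change of variables $\mu\mapsto\log\mu$ used in the weighted norm), so no critical point of the reduced functional exists for large $\mu$; since harmonic maps correspond to critical points of the reduced functional, no sequence of harmonic maps can converge to this bubble tree. The quantitative \Lojns-type estimates $\mu_j^{-\min(\dps,\dqs)}\leq C\norm{dE(u_j)}_*$ and the energy-defect bound then follow by combining this lower bound on the gradient of the reduced functional with the upper bound $\norm{dE(v_{\mu_j,a_j})}_* \lesssim \norm{dE(u_j)}_* + \norm{w_j}$ and the Lyapunov--Schmidt estimate on $w_j$, together with the second-order (genuinely non-degenerate) behaviour of $\mathcal{E}$ in the reparametrisation directions.

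I expect the main obstacle to be proving the \emph{uniform} (in $\mu$) coercivity of the linearised operator $L_{\mu,a}$ on the complement of the approximate kernel, with the correct weighted norms. The subtlety is that as $\mu\to\infty$ the model operators on the bubble and on the base are each invertible (by non-degeneracy, after quotienting by their respective Jacobi fields), but the gluing region at scale $\mu^{-1/2}$ is where low modes can leak; one has to rule out an approximate kernel concentrating there by a blow-up/compactness contradiction argument, carefully tracking which weighted space the would-be null sequence lives in so that a nonzero limit is forced to be a decaying Jacobi field on either $\om_0$ or $\om_1$, contradicting non-degeneracy. A secondary technical point is getting the \emph{sharp} exponent $\min(\dps,\dqs)$ (rather than something weaker) in the interaction estimate, which requires identifying exactly which term in the Taylor expansion of $\om_0$ and $\om_1$ near the gluing point first fails to cancel — this is where the holomorphic-versus-antiholomorphic distinction enters in an essential way and where the vanishing orders $n_i^*$ of the rational maps come in.
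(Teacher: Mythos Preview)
Your overall plan---build a finite-dimensional family of glued model maps, show uniform definiteness of $d^2E$ transverse to it, and extract a repulsion from the bubble/base interaction---is the right architecture and matches the paper's. But two of your specifics are genuine gaps, and one methodological choice differs.

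First, the paper does not carry out a Lyapunov--Schmidt reduction. It works directly with the model manifold $\ZZ$ via the quotient criterion \eqref{est:key-property}/\eqref{ass:quotient}: for each $\zz\in\ZZ$ one exhibits a tangent direction $y_\zz$ with $Q(\zz,y_\zz)$ small, and Lemma~\ref{lemma:first step} then yields $dE(\zz)(y_\zz)\lesssim\|dE(u)\|_*$ without ever solving for the correction $w$ beyond the crude bound \eqref{est:w}. This sidesteps precisely the uniform-coercivity-on-the-neck issue you flag as the main obstacle.

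Second, the repulsion is \emph{not} a force in the $\mu$-direction; this is where your argument would fail. A pure scaling variation mixes all Fourier modes of the interaction and there is no reason the resulting sum has a sign. What the paper does (Section~\ref{sec:main-term}) is vary a \emph{single} Taylor coefficient $a_{j_0}(q_i)$ of the rational map, with a phase $\theta^*$ chosen so that the angular integral $I^\theta_j(\alpha^*,U_0^*,U_1^*)$ of \eqref{def:I-theta} has a definite sign for every $j$. The opposite-orientation hypothesis enters exactly here, in Lemma~\ref{lemma:choosing-theta}: for $U_1^*(z)=U_0^*(\bar z)$ the choice $\alpha^*=\pi$ gives $-I^\theta_j=\pi|dU_0^*(0)|^2>0$. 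This produces $dE(\zz)(y_\zz)\gtrsim \nu_{i^*}$, and since $\bar\nu\gtrsim\mu^{-\min(n_0^*,n_1^*)}$ the claimed bubble-scale bound follows.

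Third, your parameter list $(\mu,a)$ is too small. The model family $\ZZ$ must also include translations $U_i(\cdot+c_i)$ of the underlying map, and the resulting distance $\delta=\mathrm{dist}_N(U_0(0),U_1(0))$ contributes a term of order $\delta^2/\log\mu$ to the energy expansion (Lemma~\ref{lemma:general-energy-expansion}). When $\delta/\log\mu$ dominates $\bar\nu$, the rational-map variation above is swamped by error terms and one must instead translate $U_i$ to decrease $\delta$ (Lemmas~\ref{rmk:de} and~\ref{lemma:energy-expansion-rotation}). The actual proof in Section~4.2 is a case split on which of $\delta/\log\mu$ or $\bar\nu$ dominates (here $\TT=0$ by non-degeneracy), with a different direction $y_\zz$ in each case; your proposal only covers the second.
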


For maps to the round sphere $L^2$-\Lojns-estimates with exponent $\gamma=2$ and exponential bounds on the bubble scale were proven by Topping in \cite{Topping-quantisation}. In this setting there are only two types of  harmonic spheres, namely holomorphic or antiholomorphic maps. The results of \cite{Topping-quantisation} apply for maps that converge to any bubble tree which is so that for each singular point $p$ the bubbles forming at $p$ are all of the same type, where for points $p$ at which bubbles form that have a different type than the base map one needs to impose  additionally  that the base map is not branched.

Topping already observed in \cite{Topping-zero-density}
that even for maps into $(S^2,g_{S^2})$ one cannot expect to prove $L^2$-\Lojns-estimates with exponent $\gamma=2$ if the base map is branched at a point where a bubble of a different type forms.  

At the same time, any $L^2$-\Lojns-estimate with an exponent $\gamma>1$ is sufficient to prove convergence results for solutions of harmonic map flow. As this is our main motivation to also prove \Lojns-estimates that involve $\norm{\tauS(u)}_{L^2(S^2,g_{S^2})}$ rather than the weaker, and scaling invariant $\norm{dE(u)}_*$, we shall hence simply show

\begin{cor}
\label{cor:thm2}
For any sequence $(u_j)$ of almost harmonic maps as in Theorem \ref{thm:2} we can bound 
\beq
\label{claim:cor2}
\abs{E(u_j)-[E(\om_0)+E(\om_1)]}\leq C \norm{\tau_{g_{S^2}}(u_j)}_{L^2(S^2,g_{S^2})}^\gamma
\eeq
for an exponent $\gamma>1$ that only depends on $U^*$ and $q_{0,1}^*$. 
\end{cor}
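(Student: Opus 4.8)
The plan is to deduce the corollary from Theorem~\ref{thm:2} by localising Simon's \Lojns-estimate \eqref{est:Simon} to the body and to the rescaled bubble region. Since $\norm{dE(u_j)}_*$ is dominated by $\norm{\tauS(u_j)}_{L^2(S^2,g_{S^2})}$, Theorem~\ref{thm:2} already gives the claimed estimate with exponent $\gamma=1$, and the improvement to $\gamma>1$ has to come from an argument that works directly with the $L^2$-norm of the tension. To this end I would fix a large $R$ and a small $\theta\in(0,1)$ and decompose $S^2=\Om_j^0\cup\mathcal{N}_j\cup\Om_j^1$ into the body region $\Om_j^0:=S^2\setminus B_{\mu_j^{-\theta}}(z_j)$, on which $u_j\to\om_0$ strongly in $L^\infty\cap H^1$ by \eqref{eq:bt-conv-2}, the bubble disc $\Om_j^1:=B_{R/\mu_j}(z_j)$, and the intermediate neck annulus $\mathcal{N}_j:=B_{\mu_j^{-\theta}}(z_j)\setminus B_{R/\mu_j}(z_j)$, with the exponent $\theta$ to be optimised at the end.

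On $\Om_j^0$ the map $u_j$ lies, for $j$ large, within the $L^\infty\cap H^1(\Om_j^0)$-neighbourhood of (the restriction of) the \emph{fixed} harmonic map $\om_0=U^*\circ q_0^*$, so a localised form of \eqref{est:Simon} gives $\abs{E(u_j;\Om_j^0)-E(\om_0;\Om_j^0)}\leq C\norm{\tauS(u_j)}_{L^2(\Om_j^0)}^{\gamma_0}+\mathcal{B}_j^0$ for some $\gamma_0=\gamma_0(\om_0)>1$ and a boundary term $\mathcal{B}_j^0$ supported on $\partial\Om_j^0\subset\mathcal{N}_j$. On $\Om_j^1$ I would pass to the rescaled maps $\tilde u_j(w):=u_j(z_j+\mu_j^{-1}w)$, which by \eqref{eq:bt-conv-2} converge in $L^\infty\cap H^1(B_R)$ to (the restriction to $B_R$ of) the fixed harmonic sphere $\om_1$, possibly after precomposing the domain with a rotation; since the Dirichlet energy is conformally invariant we have $E(u_j;\Om_j^1)=E(\tilde u_j;B_R)$, while the conformal scaling of the tension field gives $\norm{\tauS(\tilde u_j)}_{L^2(B_R)}=\mu_j^{-1}\norm{\tauS(u_j)}_{L^2(\Om_j^1)}$. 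Applying the localised \eqref{est:Simon} to $\tilde u_j$ near $\om_1$ thus yields $\abs{E(u_j;\Om_j^1)-E(\om_1;B_R)}\leq C\mu_j^{-\gamma_1}\norm{\tauS(u_j)}_{L^2(\Om_j^1)}^{\gamma_1}+\mathcal{B}_j^1\leq C\norm{\tauS(u_j)}_{L^2(\Om_j^1)}^{\gamma_1}+\mathcal{B}_j^1$ for some $\gamma_1=\gamma_1(\om_1)>1$, using $\mu_j\geq 1$. Adding the two estimates and using $\norm{\tauS(u_j)}_{L^2}\to0$, so that $\norm{\tauS(u_j)}_{L^2(\Om_j^i)}^{\gamma_i}\leq\norm{\tauS(u_j)}_{L^2}^{\gamma}$ for $\gamma:=\min(\gamma_0,\gamma_1)>1$ and $j$ large, leaves
$$\abs{E(u_j)-[E(\om_0)+E(\om_1)]}\leq C\norm{\tauS(u_j)}_{L^2}^{\gamma}+\mathcal{B}_j^0+\mathcal{B}_j^1+\abs{E(u_j;\mathcal{N}_j)},$$
with $\gamma$ depending only on $U^*$ and $q_{0,1}^*$, the analogous neck contributions of $\om_0$ and $\om_1$ being of the same or smaller order as $\abs{E(u_j;\mathcal{N}_j)}$.

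It then remains to absorb the neck and boundary terms. The neck energy $\abs{E(u_j;\mathcal{N}_j)}$ is $o(1)$ by the no-neck property of bubble-tree convergence, and for a suitable choice of $\theta$ it is in fact bounded by $C\mu_j^{-\delta}$ for some $\delta=\delta(U^*,q_{0,1}^*)>0$; the bubble-scale bound $\mu_j^{-\min(\dps,\dqs)}\leq C\norm{dE(u_j)}_*\leq C\norm{\tauS(u_j)}_{L^2}$ of Theorem~\ref{thm:2} then turns this into $C\norm{\tauS(u_j)}_{L^2}^{\delta/\min(\dps,\dqs)}$, so that after replacing $\gamma$ by $\min(\gamma,\,\delta/\min(\dps,\dqs))$ --- still $>1$ and still depending only on $U^*$ and $q_{0,1}^*$ --- one gets $\abs{E(u_j;\mathcal{N}_j)}\leq C\norm{\tauS(u_j)}_{L^2}^{\gamma}$. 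The boundary terms $\mathcal{B}_j^0$ and $\mathcal{B}_j^1$ are handled in the same way, since they are controlled by the energy of $u_j$ on $\mathcal{N}_j$ together with the $C^0$-oscillation of $u_j$ across the neck, both of which obey the same bound by a negative power of $\mu_j$.

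The main obstacle is the first step. Simon's theorem \cite{Simon} is a global statement on the closed surface $S^2$, so deriving a \Lojns-estimate on a domain with \emph{explicitly controllable} boundary terms requires re-running the infinite-dimensional argument of \cite{Simon} with cut-offs adapted to the neck --- precisely the analysis developed for Theorems~\ref{thm:1}, \ref{thm:2} and \ref{thm:3} and already used for Corollary~\ref{cor:thm1}. Once that localisation is available, the remaining work is the quantitative bookkeeping that balances the neck width $\mu_j^{-\theta}$ against $\mu_j$ and $\norm{\tauS(u_j)}_{L^2}$; it is here that the bubble-scale bound of Theorem~\ref{thm:2} is indispensable, and also why the resulting $\gamma$ acquires a dependence on the vanishing orders $\dps,\dqs$ on top of the \Lojns-exponents of $\om_0$ and $\om_1$, so that one cannot in general reach the value $\gamma=2$ that a single non-degenerate harmonic sphere would give --- consistently with Topping's observation in \cite{Topping-zero-density}.
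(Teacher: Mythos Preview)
Your outline has a genuine gap, and it is exactly the one you flag yourself: a \emph{localised} \Lojns--Simon inequality on a domain with boundary, with boundary terms that are explicitly controlled by neck data, is not available in the literature and is not what the paper proves for Theorems~\ref{thm:1}--\ref{thm:3} or Corollary~\ref{cor:thm1}. Those results are obtained by a completely different mechanism (the finite-dimensional manifold $\ZZ$ of singularity models together with Lemma~\ref{lemma:first step}), not by running Simon's argument on subdomains. So the sentence ``precisely the analysis developed for Theorems~\ref{thm:1}, \ref{thm:2} and \ref{thm:3}'' is a mischaracterisation: nothing in the paper gives you the boundary--controlled version of \eqref{est:Simon} that your first step needs, and supplying it would be a new theorem in its own right. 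There is also a smaller issue with your decomposition: with $R$ fixed, the tail energy $E(\om_1;\R^2\setminus B_R)\sim R^{-2\dqs}$ is a constant, not $o(1)$, so it is \emph{not} ``of the same or smaller order as $\abs{E(u_j;\mathcal N_j)}$''; you would need $R=R_j\to\infty$, which makes the uniformity of any localised \Loj estimate even more delicate.

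The paper proves the corollary entirely inside the $\ZZ$--framework, without invoking Simon on subdomains. One works with the choice $f_\mu=\mu^{\si_1}$ (rather than $f_\mu=\log\mu$), notes that in the setting of Theorem~\ref{thm:2} the non-degeneracy forces $\TT=0$, and starts from the energy--defect bound \eqref{est:energy-defect-general}, which here reduces to
\[
\abs{E(u)-E^*}\leqs \tfrac{\de^2}{\log\mu}+\min(\nu_0,\nu_1)+\mu^{-1/2}\nutot+\norm{\tauS(u)}_{L^2}^2.
\]
One then splits into cases according to whether $\mu^{c_1}\nutot\le\de/\log\mu$ (with $c_1=\si_1/4$) or not. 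The point that genuinely upgrades the $\norm{dE(u)}_*$--estimate of Theorem~\ref{thm:2} to an $L^2$--estimate is that the tangent directions $y_\zz$ coming from variations of the \emph{bubble} have small $L^2(S^2)$--norm, $\norm{y_\zz}_{L^2}\leqs \mu^{-1}(\log\mu)^{1/2}+(f_\mu\log\mu)^{-1}$ (see \eqref{est:norm-yz}); feeding this into \eqref{est:first-step} of Lemma~\ref{lemma:first step} gains the extra power of $\norm{\tauS(u)}_{L^2}$. The case analysis then produces an exponent $\gamma>1$ depending on $\si_1$ and $n^*=\min(\dps,\dqs)$, consistent with your final remark about branch orders, but reached by an entirely different route.
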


We finally want to show that for maps into $3$ manifolds it is impossible to glue two harmonic spheres $\om_0$ and $\om_1$ which are so that $\om_0$ and $\om_1(\frac{1}{z})$ parametrise transversally intersecting minimal surfaces, respectively transversally intersecting leafs of the same minimal surface, near $z=0$.  

Our result in this setting 
also applies for harmonic spheres 
which are degenerate critical points, i.e.~which have Jacobi-fields that are not generated by variations of rational maps. Indeed, we can  
even deal with harmonic spheres $\om_i$ which have  non-integrable Jacobi-fields, i.e. for which $d^2E(\om_i)$ has null directions that are not tangential to the set of harmonic maps near $\om_i$. 

The one thing we want to ask is that for maps $\om_i$ that are obtained as higher order coverings of a map $U_i^*$ this structure is reflected also at the level of Jacobi-fields. Namely, if $q_i^*$ has degree strictly greater than $1$ then we ask that 
 all Jacobi-fields along $\om_i=U_i^*\circ q^*$ are generated by a combination of Jacobi-fields along $U_i^*$ and Jacobi-fields that are generated by variations of the  rational maps, i.e.~
\beq
\label{ass:eq-Jabobi}
\ker(L_{\om_i})=\{W\circ q^*, \, W\in \ker(L_{U_i^*})\} +T_{\om_i}\{U_i^* \circ q: q:S^2\to S^2 \text{ rational } \},
\eeq
$L_u$ the Jacobi operator along $u$.
Here we note that the fact that $W\circ q^*$ is a Jacobi-field along $U^*\circ q^*$ whenever $W$ is a Jacobi-field along $U^*$ follows from the conformal invariance of the energy and the resulting formula
\beq
\label{eq:trafo-tension}
\tau(U\circ q)=\thalf \abs{\na q}^2 \tau(U\circ q)
\eeq
for the transformation of the tension under conformal changes. 

As our final main result we prove

\begin{thm}\label{thm:3}
Let $ (N,g_N)$ be any analytic $3$ manifold and let $U_{0,1}^*:\Chat\to N$ be any harmonic spheres with $U_1^*(0)=U_0^*(0)$ and $d U_{0,1}^*(0)\neq 0$ which are so that the tangent spaces 
 $T_{U_i^*(0)} U_i^*(\Chat)$ to the corresponding minimal surfaces $U_i(\Chat)$ do not coincide.
  Let $q_0^*$, $q_1^*$ be 
  any rational maps 
with $q_0^*(0)=q_1^*(0)=0$, assumed to satisfy \eqref{ass:eq-Jabobi} if their degree is at least $2$.

 Then  for any sequence $(u_j)$ of almost harmonic maps that converges to a bubble tree with base map $\om_0=U_0^*\circ q_0^*$ and bubble $\om_1(z)=U_1^*(q_1^*(\frac{1}{z}))$ as described in \eqref{def:almost-harmonic} and \eqref{eq:bt-conv-2} we can bound 
the bubble scale by 
$$\mu_j^{-\min(\dps,\dqs)}\leq C\norm{dE(u_j)}_*$$
and we have a 
 \Lojns-estimate of the form 
 $$\abs{E(u)-[E(\om_1)+E(\om_0)]}\leq C\norm{dE(u_j)}_*.
 $$
In particular, no sequence of harmonic maps can converge to such a bubble tree. 
\end{thm}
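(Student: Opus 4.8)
The plan is to set up a quantitative Łojasiewicz-type scheme adapted to the transversal gluing configuration in a $3$-manifold. First I would fix a sequence $(u_j)$ of almost harmonic maps converging to the bubble tree with base $\om_0=U_0^*\circ q_0^*$ and bubble $\om_1(z)=U_1^*(q_1^*(1/z))$, and pull back by M\"obius transforms so that the bubble sits at the north pole and concentrates at scale $\mu_j\to\infty$. The key geometric input, as in Theorems \ref{thm:1} and \ref{thm:2}, is to decompose the domain $\Chat$ into the \emph{body region} $\{\abs{z}\geq \la\}$ where $u_j$ is well approximated by $\om_0$, the \emph{bubble region} $\{\abs{z}\leq \La/\mu_j\}$ where $\mu_j z\mapsto u_j(z)$ is well approximated by a translate/rescaling of $\om_1$, and the intermediate \emph{neck region} $\La/\mu_j\leq\abs{z}\leq\la$. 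On the body and bubble regions one applies the Simon--Łojasiewicz estimate \eqref{est:Simon} for the fixed harmonic maps $\om_0$ and (a rescaled copy of) $\om_1$, since there the maps are $H^2$-close to these harmonic spheres; the serious point is the neck.

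Second, on the neck I would analyse the map by writing $u_j=\om_0+v_j$ (or the bubble analogue) and exploiting that near $z=0$ the image of $u_j$ must lie close to the \emph{union} of the two transversally intersecting minimal surfaces $\Si_0=U_0^*(\Chat)$ and $\Si_1=U_1^*(\Chat)$. The transversality hypothesis $T_{p}\Si_0\neq T_{p}\Si_1$ (with $p=U_0^*(0)=U_1^*(0)$) is exactly what prevents the map from interpolating cheaply between the two sheets: in a $3$-manifold, two transversal $2$-planes through $p$ meet only along a line, so to pass from being close to $\Si_0$ to being close to $\Si_1$ the map must move a definite distance \emph{off} both surfaces, and this forces a lower bound on a suitable weighted energy/tension integral over the neck. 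Quantitatively I would introduce the signed distances to $\Si_0$ and $\Si_1$, or a normal coordinate system near $p$ in which $\Si_0$ and $\Si_1$ are coordinate planes, and show that the $L^2$-norm of $\na u_j$ restricted to the region where $u_j$ is far from $\Si_0\cup\Si_1$ is controlled by $\norm{dE(u_j)}_*$, while the region where $u_j$ is close to, say, $\Si_0$ contributes energy that is essentially that of a rescaled piece of $\om_0$ and hence is accounted for by the Simon estimate on the body. Combining these gives that the total energy is, up to $C\norm{dE(u_j)}_*$, equal to the energy carried on the body plus the energy carried on the bubble, i.e. $E(\om_0)+E(\om_1)$, which is the claimed Łojasiewicz inequality; and the same localisation yields a positive lower bound of the form $c\mu_j^{-\min(\dps,\dqs)}$ for the portion of the gradient that cannot be absorbed, giving $\mu_j^{-\min(\dps,\dqs)}\leq C\norm{dE(u_j)}_*$. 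For harmonic $u_j$ the right side vanishes, forcing $\mu_j$ bounded, a contradiction, so no such sequence of harmonic maps exists.

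Third, I would handle the higher-degree covering case $q_i^*$ of degree $\geq 2$ using hypothesis \eqref{ass:eq-Jabobi}: the Jacobi fields along $\om_i=U_i^*\circ q_i^*$ split as pullbacks of Jacobi fields along $U_i^*$ plus variations of the rational map. This is used when invoking \eqref{est:Simon} on the body and bubble — one needs the full Łojasiewicz inequality near $\om_i$, and the neighbourhoods on which it holds, to be compatible with the finite-dimensional family of competitors one mods out by (M\"obius/rational reparametrisations). Concretely, I would project the maps onto the harmonic set using a Lyapunov--Schmidt type reduction, where the kernel one must control is precisely $\ker(L_{\om_i})$, and \eqref{ass:eq-Jabobi} guarantees that this kernel is spanned by geometrically meaningful directions (reparametrisations and honest Jacobi fields of $U_i^*$), so that the transversality argument of the previous paragraph still applies sheet-by-sheet. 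The transformation rule \eqref{eq:trafo-tension} for the tension under conformal change is what makes the rescaling in the bubble region tension-preserving up to the conformal factor and hence keeps the estimate \eqref{est:Simon} usable there.

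The main obstacle I expect is the neck analysis, and specifically turning the transversality condition into a genuine \emph{coercive} lower bound on the neck. The difficulty is that the neck region has logarithmically large conformal modulus, so naive energy estimates there are only borderline; one must argue that a map which is close to $\Si_0$ at the outer end $\abs{z}=\la$ and close to $\Si_1$ at the inner end $\abs{z}=\La/\mu_j$ cannot do so with small tension, and quantifying this requires a careful three-annulus / Pohozaev-type argument exploiting that $\dist(\cdot,\Si_0)^2+\dist(\cdot,\Si_1)^2$ is, by transversality, comparable to $\dist(\cdot,p)^2$ near $p$ in the $3$-manifold. Getting the right power $\min(\dps,\dqs)$ of $\mu_j$ out of this — rather than a weaker power or just a qualitative statement — is the delicate part, and it is where the order-of-vanishing of $q_0^*$ and $q_1^*$ at $0$ enters: these orders govern how fast $\om_0$ approaches $p$ as $\abs{z}\to 0$ and how fast $\om_1$ approaches $p$ as $\abs{z}\to\infty$, hence the rate at which the two sheets of the image pinch together along the neck. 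The rest — the body and bubble estimates via \eqref{est:Simon}, the Lyapunov--Schmidt reduction using \eqref{ass:eq-Jabobi}, and the final contradiction for harmonic maps — I expect to be technically heavy but conceptually routine given the framework already set up for Theorems \ref{thm:1} and \ref{thm:2}.
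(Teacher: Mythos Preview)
Your approach is genuinely different from the paper's, and as outlined it has a real gap.

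The paper does \emph{not} decompose into body/bubble/neck and apply Simon's estimate \eqref{est:Simon} separately on the pieces. Instead it builds a finite-dimensional manifold $\ZZ$ of explicit glued ``singularity models'' $\zz=\zz(U_0,U_1,q_0,q_1,\mu)$ (Section~\ref{sect:Z}), proves that $d^2E$ is uniformly definite orthogonal to $\ZZ$ (Lemma~\ref{lemma:positive-def-2ndvar}), and then for each $\zz$ identifies a tangent direction $y_\zz\in T_\zz\ZZ$ along which $dE(\zz)(y_\zz)$ has a sign and the quotient $Q(\zz,y_\zz)$ of \eqref{est:key-property} is small. Transversality enters not as a neck obstruction but algebraically, in Lemma~\ref{lemma:choosing-theta}: it guarantees that a certain Fourier integral $I_j^\theta(\alpha^*,U_0^*,U_1^*)$ is bounded away from zero, which makes the variation of $q_i$ in Lemma~\ref{lemma:energy-expansion-rational} produce a lower bound $dE(\zz)(\partial_\eps\zz)\gtrsim |d\beta(0)|^2\nu_{i^*}$. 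The proof of Theorem~\ref{thm:3} (Section~4.2) is then a three-case split according to which of $\tfrac{\de}{\log\mu}$, $\TT$, $\nutot$ dominates, each case invoking one of Lemmas~\ref{lemma:energy-expansion-rational}, \ref{lemma:energy-expansion-rotation}, \ref{lemma:energy-expansion-tension} to feed into Lemma~\ref{lemma:first step}. The point, as stressed in the outline and Remark~\ref{rmk:spectral-gap}, is precisely to \emph{avoid} analysing general almost-harmonic maps on the neck.

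The gap in your plan is the neck step. You propose to show that a map close to $\Sigma_0$ at the outer boundary and to $\Sigma_1$ at the inner boundary must, by transversality, have tension bounded below by $c\mu_j^{-\min(n_0^*,n_1^*)}$, via a ``three-annulus / Pohozaev-type argument'' using $\dist(\cdot,\Sigma_0)^2+\dist(\cdot,\Sigma_1)^2\sim\dist(\cdot,p)^2$. But this comparability is exactly what fails: near $p$ the two surfaces intersect along a curve (they are $2$-planes in a $3$-space meeting along a line, to leading order), so a map can stay arbitrarily close to $\Sigma_0\cup\Sigma_1$ while transitioning, by sliding along that intersection curve. The obstruction is subtler --- it is that the \emph{conformal} structure of the map cannot align with both sheets simultaneously --- and extracting the sharp power $\mu^{-\min(n_0^*,n_1^*)}$ from this requires exactly the kind of explicit model computation the paper does in Section~\ref{sec:main-term}. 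Your body/bubble use of \eqref{est:Simon} also has a boundary issue (Simon's estimate is for closed-domain maps), though that is more fixable. As written, the neck argument is the missing idea, not merely a technicality.
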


\begin{rmk}
If $U_{0,1}^*$ are non-degenerate critical points, then the above theorem remains valid also without the assumption that $(N,g_N)$ is analytic. 
\end{rmk}

We note that in contrast to Theorems \ref{thm:1} and \ref{thm:2} which are valid for target manifolds of arbitrary dimension, here we have to impose that the target is $3$ dimensional. We cannot expect to obtain the same repulsion effect if we were to consider maps into higher dimensional manifolds such as 
 $N=S^2\times S^2\hookrightarrow \R^6$ where e.g. $u_j(\cdot):=(\pi(\cdot), \pi(\frac1{\mu_j \cdot})):\R^2\to N$, $\mu_j\to \infty$, are harmonic maps which converge to a bubble tree for which the tangent spaces of the base $\om_0=(\pi,\pi(\infty))$ and the bubble $\om_1=(\pi(\infty),\pi(\frac{1}{\cdot}))$ intersect transversally in a point. 

\textbf{Outline of the paper:}\\
To prove our main results we use a general method that was first developed in the joint work \cite{MRS} of Malchiodi, Sharp and the author on $H$-surfaces. 
A crucial aspect of this method is that it allows us to prove \Loj estimates
for sequences of (almost) critical points of an energy $E$ that converge to a singular limit, here a bubble tree, without having to analyse the properties of such general (almost) critical points. 

 Instead, if we can construct a suitable finite dimensional (non-compact) manifold $\ZZ$ of \textit{singularity models} so that the restriction of the energy to $\ZZ$ has the right properties, then this method allows us to obtain \Lojns-estimates that are valid on a \textit{uniform} $\eps$ \text{neighbourhood} of $\ZZ$ based on the properties of the energy and its variation on $\ZZ$.

These singularity models will in general not be critical points of the energy, but will always be so that $\norm{dE(\zz)}_* $ is small as they serve as models for the behaviour of almost critical points. The key properties $\ZZ$ needs to satisfy are that
\begin{itemize}[topsep=0pt]
\item[(i)] The second variation of the energy is uniformly definite orthogonal to $\ZZ$
\item[(ii)] For each $\zz\in \ZZ$ which is not a critical point of $E$  we can identify a direction on $\ZZ$ in which the variation of $E$ has a sign and suitable scaling. To be more precise, we need that 
 for each such $\zz\in \ZZ$ there exists a unit element $y_\zz\in T_\zz\ZZ$  so that 
\beq \label{est:key-property}
Q(\zz,y_\zz):= \frac{  \norm{dE(u)}_*\big(\norm{dE(u)}_*+\norm{d^2E(u)(y_\zz,\cdot)}_*\big)}{dE(u)(y_\zz)} \text{ is small .}\eeq
\end{itemize}

Our proofs hence consist of three main steps: 
the construction of the manifold $\ZZ$ of singularity models,  the analysis of the energy and its variations on $\ZZ$
and finally the arguments of how these properties of $E$ on $\ZZ$ yield our main results.
After explaining the construction of the singularity models in Section \ref{sect:Z} we state the relevant properties of $E$ on $\ZZ$ 
in a series of lemmas in the subsequent Section \ref{sect:key-lemmas}, but postpone the rather technical proofs until Sections \ref{section:proof-of-lemmas} and \ref{sect:variations} as an understanding of these proofs is not required to complete the proofs of the main results which are carried out in Section \ref{sect:main-proofs}. 


\section{Construction of the singularity models}\label{sect:Z}

In this section we construct the singularity models 
which are maps $\zz:S^2\to N$ that model the behaviour of sequences of almost harmonic maps that converge to a bubble tree 
with base  $\om_0=U_0^*\circ q_0^*$  and bubble  $\om_1(z)=U_1^*\circ q_1^*(\frac{1}{z})$
where here and in the following  
 $U_i^*:\Chat\to N$ always denote harmonic maps with $d U_i^*(0)\neq 0$ and  $q_i^*$ are elements of 
\beq
\label{def:R}
\RR:=\{q:\Chat\to\Chat \text{ rational with } q(0)=0\}.
\eeq
It suffices to construct singularity models $\zz:S^2\to N$ for which the bubble is attached at the north pole and we denote the corresponding set of maps by $\ZZ_0$. Once we have constructed $\ZZ_0$ we can then obtain the full manifold of singularity models 
as 
\beq\label{def:ZZ-full}
\ZZ:=\{\zz_0\circ R_p: p\in S^2, \, \zz_0\in \ZZ_0\},
\eeq  where $R_p\in SO(3)$ denotes the rotation which maps the plane containing $0$, $p$ and the north pole $\pi(0)$ to itself and which is so that $R_p(p)=\pi(0)$ (with the convention that  $R_p=\Id$ if $p=\pi(0)$).

To construct these maps $\zz\in \ZZ_0$ we work in fixed stereographic coordinates where the base and bubble will be represented by harmonic maps $U_i^*:\Chat\to N$ and rational maps $q_i^*:\Chat\to \Chat$. 
As discussed in the introduction we need to analyse the three cases where either $U_0^*=U_1^*$ or $U_0^*(\bar z)=U_1^*(z)$  or where $U_{0,1}^*$ parametrise minimal spheres in a 3-manifold that intersect transversally in $U_1^*(0)=U_0^*(0)$. 

We can assume without loss of generality that the rational maps $q_{0,1}^*$ are so that the leading order coefficients in $q_i^*(z)= a_{n_i^*}(q_i^*) z^{n_i^*}+\sum_{j>n_i^*} a_{j}(q_i^*) z^j$ are so that $\abs{a_{n_i^*}(q_i^*)}=1$. 
Indeed, to ensure this for $q_0^*$ we can 
simply replace $U_0^*$ by $U_0^*(c \cdot)$ for a suitable $c>0$, while
replacing the chosen scales $\mu_i$ in the bubble tree convergence 
 \eqref{eq:bt-conv-2}
by $c\mu_i$ allows us to assume that this holds true for $q_1^*$ while still preserving the relations $U_0^*=U_1^*$ respectively $U_0^*(z)=U_1^*(\bar z)$.

To obtain a manifold $\ZZ$ which is so that $d^2E$ is definite orthogonal to $\ZZ$ we first
need to 
 determine suitable sets $\HH_1^\si(U_i^*)$ of maps which are so that the tangent space to 
\beq\label{def:H}
\HH^\si(\om_i):=\{U\circ q: U\in \HH_1^\si(U_i^*), \quad q\in \RR^\si(q_i^*)\}\eeq
at $\om_i=U_i^*\circ q_i^*$  coincides with the space $\ker(L_{\om_i})$ of Jacobi-fields along $\om_i$. 
Here and in the following $\si>0$ denotes a small constant that is chosen later and that is in particular small enough so that all maps in 
\beq
\label{def:RR-si}
\RR^\si(q_i^*):\{ q\in \RR: \norm{\pi\circ q\circ \pi^{-1}-\pi \circ  q_i^*\circ \pi^{-1} }_{C^k(S^2)}<\si\}\eeq
have the same degree  as $q_i^*$ and are so that $\abs{a_{n_i^*}(q_i)}\in (\half ,2)$.   Here and in the following we can choose $k\geq 2$ to be any fixed number.

If $\om_i$ is non-degenerate, i.e.~if all Jacobi fields are generated by variations of rational maps, then we can simply choose  
$$\HH_1^\si(U_i^*):=\{ U_i^*(\cdot +c): c\in \C, \, \abs{c}<\si \}.
$$ 
More generally if all Jacobifields along $U_i^*$ are integrable we first  choose a manifold $\HH_0(U_i^*)$ of harmonic maps of dimension $\dim(\ker L_{U_i^*})-6$ which is so that each harmonic map close to $U_i^*$ can be written uniquely in the form $U\circ M$ for a M\"obiustransform $M$ and an element $U$ of $\HH_0(U_i^*)$. We then set 
\beq
\label{def:HH1} 
\HH_1^\si(U_i^*):=\{ U(\cdot +c): c\in \C,\, \abs{c}< \si, \, U\in \HH_0(U_i^*) \text{ with } \norm{(U-U_i^*)\circ \pi^{-1}}_{C^k(S^2)}<\si \} \eeq
and note that 
 \eqref{ass:eq-Jabobi} ensures that the resulting $\HH^\si(\om_i)$ is indeed so that $T_{\om_i}\HH^\si(\om_i)=\ker(L_{\om_i})$.

In the more involved case where $U_i^*$ has non-integrable Jacobi-fields we need to additionally include certain non-harmonic maps in  $\HH^\si(\om_i)$, 
 and hence in $\HH_1^\si(U_i^*)$ and $\HH_0(U_i^*)$, to ensure that also these non-integrable Jacobi-fields correspond to directions that are tangent to 
 $\HH^\si(\om_i)$. 
 In this case, we first choose $\HH_0(U_i^*)$ as
described in detail in Appendix A of \cite{R} as a manifold of smooth maps with
\beq
\label{est:key-HH-0}
T_{U_i^*}\{U\circ M: U\in \HH_0(U_i^*), M \text{ M\"obius transform}\}=\ker(L_{U_i^*})\eeq
which has the key properties that 
for every $U\in \HH_0(U_i^*)$
we can bound \beq
\label{est:key-HH-Ck}
\norm{\tau_{g_{S^2}}(U\circ \pi^{-1})}_{C^k(S^2)}\leq C \norm{\tau_{g_{S^2}}(U\circ \pi^{-1})}_{L^2(S^2,g_{S^2})}
\eeq and find a variation $U_\eps$ in $\HH_0(U_i^*)$ of $U$  with 
 $\norm{\peps (U_\eps\circ \pi^{-1})}_{C^k}\leq C$ for which
\beq
\label{est:key-HH}
\ddeps E(U_{\eps})\geq \norm{\tau_{g_{S^2}}(U\circ \pi^{-1})}_{L^2(S^2,g_{S^2})},
\eeq
see Lemma 2.1 of \cite{R}.

We then 
 again define $\HH_1^\si(U_i^*)$ by \eqref{def:HH1} and note that 
 this manifold inherit the properties \eqref{est:key-HH-Ck} and \eqref{est:key-HH}, while \eqref{ass:eq-Jabobi} ensures that $T_{\om_i}\HH^\si(\om_i)=\ker(L_{\om_i})$ for $\HH^\si(\om_i)$ defined by \eqref{def:H}.

Having chosen $\HH_1^\si(U_i^*)$ in this way we now obtain our singularity models by carefully gluing a highly concentrated copy of an element of $\HH_1^{\si}(U_1^*)$ onto an element of $\HH_1^{\si}(U_0^*)$. 
That is we want to construct the elements $\zz$ of $\ZZ_0$ in a way that  $\zz\approx U_0(q_0(z))$ 
near $z=0$ while  $\zz(z)\approx U_1\circ q_1(\frac{1}{\mu z})$ away from zero  for  maps $U_{i}\in \HH_{1}^\si(U_{i}^*)$, $ q_{i}\in \RR^{\si}(q^*_{i})$, $i=0,1$ and a large scaling factor $\mu$.  

 \begin{rmk}
 \label{rmk:mu-large-si-small}
We will always consider elements that are obtained 
from maps in $\HH_1^\si(\om_i)$ and scaling factors $\mu> \bar \mu$ for  a sufficiently large number  $\bar \mu$
and a sufficiently small number $\si>0$ (depending only on $U_i^*$, $q_i^*$)
and in the following all claims and estimates
 are to be understood to hold true after increasing $\bar \mu$ and decreasing $\si$ if necessary.
 \end{rmk}

To describe this construction we 
 first assign to each $q_0\in \RR^{\si}(q_0^*)$ and each 
  $q_1(\frac{1}{\mu z})$ that we obtain from an element $q_1\in \RR^\si(q_1^*)$ and a scaling factor $\mu> \bar \mu$ the numbers 
\beq\label{def:mui}
\mu_0:= \abs{a_{n_0^*}(q_0)}^{-1/n_0^*}\text{ and }
\mu_1:= \mu \abs{a_{n_1^*}(q_1)}^{-1/n_1^*}.
\eeq
We note that while there are multiple ways of representing the same rational map $q_1(\frac{1}{\mu z})$ using an element of $\RR^\si(q_1^*)$ and a factor $\mu> \bar \mu$, the number $\mu_1$ is uniquely determined by $q_1(\frac{1}{\mu z})$ and while $\mu_0$ is of order $1$, $\mu_1$ will be of order $\mu$. 

Some of the gluing construction below will be carried out  on annuli 
\beq
\label{def:annuli}
A=\DD_{r_0}\setminus \DD_{r_1} , \quad  A^*:=(\DD_{2r_1}\setminus \DD_{r_1})\cup (\DD_{r_0}\setminus \DD_{\half r_0}) \text{ and } 
\hat A= \DD_{2\hat r}\setminus \DD_{\half \hat r}
\eeq
 whose radii
$\mu^{-1}\ll r_1\ll \hat r\ll r_0\ll 1$ are determined by 
\beq
\label{def:ri}
r_0:= \mu_0f_{\mu_{0}/\mu_1}^{-1} , \quad 
r_1:= \mu_1^{-1} f_{\mu_{0}/\mu_1} \text{ and } \hat r:= \mu_0^\half \mu_1^\mhalf \quad. 
\eeq
Here $\mu\mapsto f_\mu$ is a fixed non-decreasing  function which is so that
\beq
\label{ass:fmu}
\mu \abs{\partial_\mu f_\mu}\leq\fmu \text{ while } 
\tfrac{\log f_{\mu}}{\log\mu}\leq \si_1 \text{ and } f_{\mu}^{-1}\leq \si_1 
\eeq
for $\mu>\bar \mu$ and a small constant  $\si_1\in(0,\frac14]$. We use the same convention for $\si_1$ as for $\si$, i.e. ask in the following that all claims  hold provided $\fmu$ satisfies this assumption for a sufficiently small number $\si_1>0$ that only depends on $U_{0,1}^*$ and $q_{0,1}^*$.

The precise choice of $\fmu$ is not important in the construction and it will be useful later on that we can fix $\fmu$ according to our needs as for the proof of Theorems  \ref{thm:3} it will be convenient to work with $\fmu=\log\mu$ while in the proof of Corollary \ref{cor:thm2} we will want to choose $\fmu=\mu^{\si_1}$.

\begin{rmk}\label{rmk:symm}
We note that the choice of the above radii and annuli is so that the construction is invariant under a rescaling $z\mapsto \la z$ of the stereographic coordinates and 
symmetric with respect to interchanging the roles of the base and the bubble as we change the coordinates according to $z\mapsto \frac{\mu_0}{\mu_1 z}$.
\end{rmk}

To construct our 
singularity models we will first modify the maps 
\beq\label{def:ui}
\bb:=\bbu\circ \rmmu \text{ and } \bm:=\bmu\circ \rmmu \text{ for }q_\mu(z):= q_0(z)+q_1(\tfrac{1}{\mu z})
\eeq
on $A$ so that they agree upto first order errors in  $\abs{z}+\abs{\mu z}^{-1}$, then further modify the resulting maps on all of $\R^2$ so that they agree 
upto second order before gluing these maps together on the central annulus $\hat A$ and projecting onto $N$.

To this end we let
 $\hat \gamma=\hat \gamma_{\bbu(0),\bmu(0)}:[0,1]\to N$ be the shortest geodesic from $\hat \gamma(0)=U_1(0)$ to 
$\hat \gamma(1)=U_0(0)$. We note that $\hat \gamma$ is well defined as 
$\delta(\bmu,\bbu):= \dist_N( \bmu(0),\bbu(0))$
is small 
for all elements of  $\HH^\si(U_i^*)$ as such maps are 
 $C^k$ close to $U_i^*$ and 
$U_0^*(0)=U_1^*(0)$.  

To transition between 
$U_1(0)$ and $U_0(0)$ on $A$ we use
 \beq 
\label{def:gamma}
\gamma:=\hat \gamma\circ \phi_{r_0,r_1} 
\eeq
where $\phi_{r_0,r_1}=\phi_{r_0,r_1}(\abs{x}):\R^2\to [0,1]$ 
is defined by 
\beq
\label{def:phi-mu-la}
\phimul(r):=\begin{cases} 
(1-\phi(\tfrac{r}{r_1}))\cmul\log(\tfrac{r}{r_1}) &\text{ for } r\leq  2r_1 \\
\cmul\log(\tfrac{r}{r_1} )=1-\cmul\log(\tfrac{r_0}{r}) &\text{ for } r\in (2 r_1,\thalf r_0)
\\
1-\phi(\tfrac{2r}{r_0})\cmul \log(\tfrac{r_0}{r})  &\text{ for } r\geq  \half r_0
\end{cases} 
\eeq 
for $\cmul:=(\log(\frac{r_0}{r_1}))^{-1}$
and a fixed 
cut-off function $\phi\in C_c^\infty([0,2),[0,1])$  with $\phi\equiv 1$ on $[0,1]$.

We note that $\phi_{r_0,r_1}$ is chosen so that  $\Delta \phi_{r_0,r_1}\equiv 0$ outside of  $A^*$  
and so that  $\phi_{r_0,r_1}\vert_{\DD_{r_1}}\equiv 0$ while $\phi_{r_0,r_1}\vert_{(\DD_{r_0})^c}\equiv 1$. Hence $\gamma:\R^2\to N$ is given 
by a harmonic map outside of $A^*$ and 
 transitions between 
 $\gamma\vert _{\DD_{r_1}}\equiv U_1(0)$ and $\gamma\vert_{\DD_{r_0}^c}\equiv U_0(0)$.
Setting
\beq
\label{def:gamma-tilde}
 \gabb:= \gamma-\bbu(0) \text{ and } \gabm:=\gamma-\bmu(0)
\eeq
we can hence change our original maps $u_{0,1}$ into maps $u_1(z)+\gabb(z)$ and $u_0(z)+\gabm(z)$ that agree upto error terms of order $O(\abs{z}+\frac{1}{\mu \abs{z}})$ on $A$. 
Next we set  
\beq
\label{def:omu}
\hhompi:=\bmu-\bbu, \quad \modbm:= -d\hhompi(0)(q_1(\tfrac{1}{\mu z} )) \text{ and }\modbb:= d\hhompi(0)(\rmbmc(z))
\eeq
to obtain maps 
\beqa\label{def:v1}
v_1(z):= \bb(z)+\gabb(z)+d\hhompi(0)(\rmbmc(z))=\bb(z)+\gabb(z)+\modbb(z)
\eeqa
and 
\beqa \label{def:v2} 
v_0(z):=\bm(z)+\gabm(z) -d\hhompi(0)(\rmbbmuc(z))=\bm(z)+\gabm(z)+\modbm(z),
\eeqa
which now agree upto a second order error term 
$v_0-v_1
=\hhompi(\rmmu)-\hhompi(0)-d\hhompi(0)(\rmmu).
$

We finally interpolate between these maps on the central annulus 
$\hat A$
 and project onto $N$. 
That is we define our singularity model as 
\beqa \label{def:zz}
\zz:=\pi_N((1-\varphi_{\hat r}) v_1+\varphi_{\hat r} v_0),
\eeqa
where we set $\varphi_{\hat r}(z):=\bar \varphi(\hat r^{-1}\abs{z})$ for a fixed smooth function $\bar \varphi:[0,\infty)\to [0,1]$ which is so that 
$\bar \varphi(t)= 0$ for $t\leq \half$ while $\bar \varphi(t)=1$ for $t\geq 2$.

We note that these maps are well defined for $U_{0,1}\in \HH^\si(U_{0,1}^*)$ and  $q_{0,1}\in \RR^\si(q_{0,1}^*)$ as the image of 
$(1-\varphi_{\hat r}) v_1+\varphi_{\hat r} v_0$ will be contained in a small tubular neighbourhood of $N$ where the nearest point projection is well defined.

\begin{rmk}
\label{rmk:gluing}
An important feature of our construction is that the modifications to the maps $u_i$ are not just supported on an annulus, but rather on all of $\R^2$. 
In contrast to maps we would get from a more ad hoc gluing construction this means that $dE(\zz)$ has a significant part that is supported on $\DD_{r_1}$ and $\DD_{r_0}^c$. This will be crucial in the proof of our main results later on as  variations of elements of $\ZZ_0$ induced by variations of one of the rational maps are mainly concentrated on  $\DD_{r_1}$ respectively $\DD_{r_0}^c$ and hence can only be used to obtain directions $y_\zz\in T_\zz\ZZ$ satisfying \eqref{est:key-property} if $dE(\zz)$ is so that it interacts suitably with such directions. 
\end{rmk}

  \section{Key lemmas}\label{sect:key-lemmas}
Here we collect the key properties on the behaviour of the energy and its variations on $\ZZ$ that we need to prove our main results. We postpone the proofs of these lemmas to Sections \ref{section:proof-of-lemmas} and \ref{sect:variations}, as an understanding of these technical proofs is not required for the proofs of our main results.

As these estimates will all be invariant under precomposition with a rotation, it suffices to state and prove them for elements of the set $\ZZ_0$ of singularity models for which the bubble is attached at the north pole. We can furthermore assume that $q_1(\frac{1}{\mu z})$ is represented by a map $q_1$ for which $\abs{a_{n_1^*}(q_1)}=1$ and hence use that $\mu_1$ and $\mu_0$ defined in \eqref{def:mui} are so that  $\mu_1=\mu$ while $\mu_0\sim 1$.

Given $\zz=\zz(U_0\circ q_0, U_1\circ q_{1}(\frac{1}{\mu\cdot}))\in \ZZ_0$  we 
will work with respect to the
weighted $H^1$-norm $\norm{\cdot}_\zz$ and the corresponding inner product $\langle\cdot,\cdot\rangle_\zz$
which are given in stereographic coordinates by 
 \beq
 \label{def:inner-product}
 \norm{w}_{\zz}^2:=\int_{\R^2} \abs{\na w}^2 +\rho_\zz^2 \abs{w}^2 dx
 \text{ for } \rho_{\zz}^2:=\abs{\na \pi_{\mu_0(\zz)^{-1}}}^2+\abs{\na \pi_{\mu_1(\zz)}}^2,
 \eeq
  $\pi_\la(z):=\pi(\la z)$.
We note that this definition respects the symmetry of the construction, compare Remark \ref{rmk:symm}, and that we define $\norm{\cdot}_\zz$  for general elements $\zz=\zz_0\circ R_p$ of $\ZZ$ by the analogue formula in stereographic coordinates centred at $p$. 
 
 We recall that the second variation of the energy is given by 
\beq
\label{eq:second-var}
d^2E(\zz)(v,w)=\int \na v\na w-A(u)(\na u,\na u)A(u)(v,w) \text{ for } v,w\in\Gamma(\zz^* TN),
\eeq
compare e.g. \cite[Lemma 3.1]{R}, and note that $d^2E$ is uniformly bounded with respect to this norm, i.e.~so that 
\beq
\label{est:sec-var-general}
\abs{d^2E(\zz)(w,v)}\leqs \norm{w}_\zz \norm{v}_\zz \text{ for all } v,w\in  \Gamma(\zz^* TN),
\eeq
 compare Section \ref{sect:variations}. 
 Here and in the following
 we write $T_1\leqs T_2$ if we can bound  $T_1\leq C T_2$  for a constant $C$ that only depends on the limiting configuration, i.e.~on $U_{0,1}^*$ and $q_{0,1}^*$ while we write $T_1\sim T_2$ if both $T_1\leqs T_2$ and $T_2\leqs T_1$.

The first key property we need is that the choice of $\HH_1^\si(U^*_{0,1})$ in the construction of $\ZZ_0$ ensures that the second variation is uniformly definite orthogonal to $\ZZ$, i.e.~that

\begin{lemma}
\label{lemma:positive-def-2ndvar}
There exist $c_0>0$ depending only on $U_{0,1}^*$ and $q_{0,1}^*$  so that
for every $\zz\in \ZZ_0$ we can split $T_\zz^\perp\ZZ$ orthogonally into subspaces $\VV_\zz^\pm$ which are so that 
\beq
\label{est:2ndvar-pos-def}
\pm d^2E(\zz)(w^\pm,w^\pm)\geq c_0\norm{w^\pm}_\zz^2 \text{ and } d^2E(\zz)(w^+,w^-)=0 \text{ for all }w^\pm \in \VV_\zz^\pm.
\eeq 
\end{lemma}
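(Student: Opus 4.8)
The plan is to decompose the orthogonal complement $T_\zz^\perp\ZZ$ according to the three regions where the singularity model $\zz$ is approximately $\om_0$-like (the region $\DD_{r_0}$ in the rescaled picture around the base), $\om_1$-like (the region $\DD_{r_1}^c$ around the bubble), and the transition region. On each of the two main regions, $\zz$ is $C^k$-close to the harmonic map $\om_i = U_i^*\circ q_i^*$, and the construction of $\HH_1^\si(U_i^*)$ was tailored precisely so that the tangent space $T_{\om_i}\HH^\si(\om_i)$ captures all Jacobi fields, i.e. $\ker(L_{\om_i}) = T_{\om_i}\HH^\si(\om_i)$. The spectral theory of the Jacobi operator $L_{\om_i}$ on a fixed harmonic map then gives a gap: on the orthogonal complement of $\ker(L_{\om_i})$ the quadratic form $d^2E(\om_i)$ splits into a uniformly positive part and a finite-dimensional uniformly negative part, with a constant depending only on $\om_i$, hence only on $U_i^*, q_i^*$. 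First I would record this fixed-limit spectral decomposition as the backbone.

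Next I would transfer this to $\zz$. The key estimates are that $d^2E(\zz)$ is uniformly bounded in the $\norm{\cdot}_\zz$-norm (equation \eqref{est:sec-var-general}, proved in Section \ref{sect:variations}), and — the crucial point — that on test fields supported away from the transition annulus the quadratic form $d^2E(\zz)$ differs from the corresponding $d^2E(\om_i)$ by an error that is small (controlled by $\si$ and negative powers of $\mu$), because there $\zz$ differs from $\om_i$ only by the explicit small corrections $\gabb, \modbb$ etc. built in Section \ref{sect:Z}. So I would build $\VV_\zz^+$ and $\VV_\zz^-$ by: taking the negative eigenspaces of $L_{\om_0}$ and $L_{\om_1}$, cutting them off to the respective regions $\DD_{\hat r}$ and $\DD_{\hat r}^c$ using the fixed cut-off $\varphi_{\hat r}$, and (after projecting out the tangential-to-$\ZZ$ part) declaring their span to be $\VV_\zz^-$; then letting $\VV_\zz^+$ be the $d^2E(\zz)$-orthogonal (equivalently, after a small perturbation, the $\langle\cdot,\cdot\rangle_\zz$-orthogonal) complement of $\VV_\zz^- $ inside $T_\zz^\perp\ZZ$. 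One checks $\dim \VV_\zz^- = \dim\ker(L_{\om_0})^- + \dim\ker(L_{\om_1})^-$ is finite and constant, so the cutoff-and-project step changes the spaces by a controllably small amount and preserves the dimensions.

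Then I would verify the two inequalities in \eqref{est:2ndvar-pos-def}. For $w^- \in \VV_\zz^-$: write $w^- = w_0^- + w_1^-$ with $w_i^-$ supported near region $i$; since the regions overlap only on $\hat A$ where everything is small, $d^2E(\zz)(w^-,w^-) \approx d^2E(\om_0)(w_0^-,w_0^-) + d^2E(\om_1)(w_1^-,w_1^-) \le -c(\norm{w_0^-}^2 + \norm{w_1^-}^2) \le -c_0\norm{w^-}_\zz^2$, using the spectral gap on each piece plus the smallness of cross terms and of the transition-region contributions. For $w^+ \in \VV_\zz^+ = T_\zz^\perp\ZZ \ominus \VV_\zz^-$: here $w^+$ is $d^2E(\zz)$-orthogonal to the negative directions and to $T_\zz\ZZ$; I would argue that $T_\zz\ZZ$ together with $\VV_\zz^-$ together with a small error contains all the ``bad'' directions of $d^2E(\zz)$, essentially because near each region $\ker(L_{\om_i})^{\ge 0}$-complement of $\ker L_{\om_i}$ is positive and $T_{\om_i}\HH^\si(\om_i) = \ker L_{\om_i}$ has been absorbed into $T_\zz\ZZ$, so on $\VV_\zz^+$ one gets $d^2E(\zz)(w^+,w^+) \ge c_0\norm{w^+}_\zz^2$. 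The orthogonality $d^2E(\zz)(w^+,w^-)=0$ holds by construction of $\VV_\zz^+$ (and one uses the uniform bound \eqref{est:sec-var-general} plus coercivity to see this split is a genuine orthogonal direct sum in $\norm{\cdot}_\zz$ up to controlled error, then passes to an honest $d^2E$-orthogonal complement).

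The main obstacle will be step two to three, the localization argument: showing that a Jacobi field, or more precisely a near-null or negative direction, of $d^2E(\zz)$ which is orthogonal to $T_\zz\ZZ$ and to the explicit cut-off negative modes must be uniformly positive. The difficulty is that a field on $S^2$ need not decompose cleanly as ``base part plus bubble part'' — one has to control the behaviour on the transition annulus $\hat A$ and the logarithmic-cutoff region $A^*$, and rule out low-energy modes concentrating there. The resolution is to use the specific choices of radii $r_0, r_1, \hat r$ in \eqref{def:ri} and the harmonicity of the transition functions $\phimul$ outside $A^*$ to show the quadratic form restricted to fields supported on the transition region is essentially that of the flat Laplacian on an annulus (no potential term to leading order), which is nonnegative, together with a Poincaré-type inequality to absorb it; this is where the weighted norm $\norm{\cdot}_\zz$ and the gluing details of Section \ref{sect:Z} are doing the real work, and I expect the proof to invoke the estimates proved in Section \ref{sect:variations} to make all these error terms quantitative.
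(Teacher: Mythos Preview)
Your strategy differs structurally from the paper's. The paper does \emph{not} construct $\VV_\zz^\pm$ explicitly by transplanting the negative eigenspaces of $L_{\om_0}$ and $L_{\om_1}$. Instead it first proves the single claim that for every unit $w\in T_\zz^\perp\ZZ$ there exists a unit $v\in T_\zz^\perp\ZZ$ with $d^2E(\zz)(w,v)\geq c_0$; this shows the self-adjoint operator $\tilde L_\zz$ (defined by $d^2E(\zz)(v,w)=\langle\tilde L_\zz v,w\rangle_\zz$) has trivial kernel and a uniform spectral gap at~$0$, and then the spectral theorem produces $\VV_\zz^\pm$ as the spans of eigenfunctions to positive resp.\ negative eigenvalues. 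The advantage of this route is twofold. First, the claim ``for every $w$ there is some $v$'' is strictly easier than your ``$d^2E(\zz)(w^+,w^+)\geq c_0\norm{w^+}_\zz^2$ for \emph{all} $w^+$ orthogonal to an explicit finite-dimensional space'', because one may choose $v$ adapted to $w$: the paper does a case split (if $w$ has small mass off the annulus then $v=w$ works since the potential term is $o(1)$ there; otherwise one transfers $w$ to a $\hat w$ along the limit map $\hat\zz\in\HH^\si(\om_0)$, applies the known definiteness of $d^2E(\hat\zz)$ on $T_{\hat\zz}^\perp\HH^\si(\om_0)$ to obtain $\hat v$, and transfers back). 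Second, the spectral decomposition automatically gives \emph{both} the $\langle\cdot,\cdot\rangle_\zz$-orthogonality of the split and the $d^2E$-orthogonality $d^2E(\zz)(w^+,w^-)=0$, which your explicit construction does not produce simultaneously; your parenthetical ``equivalently, after a small perturbation'' glosses over the fact that the $d^2E$-orthogonal complement of your transplanted $\VV_\zz^-$ need not be $\langle\cdot,\cdot\rangle_\zz$-orthogonal to it.

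Your plan is not wrong in principle, and your identification of the transition-region behaviour (potential term negligible, quadratic form $\approx$ Dirichlet energy) is correct and is exactly what the paper uses in the ``$v=w$'' case. But the step you flag as the main obstacle---positivity on $\VV_\zz^+$---is left as a heuristic (``$T_\zz\ZZ$ together with $\VV_\zz^-$ contains all the bad directions''); making that precise requires essentially the same localize-and-compare argument the paper carries out, only now for every $w^+$ rather than with the freedom to pick a test direction. If you pursue your route you will end up proving the paper's non-degeneracy claim as a sublemma, and it is then cleaner to invoke the spectral theorem directly rather than to verify coercivity on an explicitly constructed complement.
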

 
 To state all other properties of the energy $E$ on $\ZZ$ we now 
associate to each 
 $\zz=\zz(U_i,q_i, \mu)$ the quantities 
\beqa 
\label{def:quantities-z}
\de:=\dist_{N}(\bbu(0),\bmu(0)),\quad 
\TT:= \max(\TT_0,\TT_1) \text{ and } \nutot := \max(\nu_1,\nu_0)
\eeqa
where we set
$ \TT_i :=\norm{\tauS(\bbu\circ \pi^{-1})}_{L^2(S^2,g_{S^2})}$ and 
write $q_i(z)=\sum_ja_j(q_i)z^j$ near $z=0$ to define  
\beq
\label{def:nu}
\nu_1:= \max_{j=1,\ldots,\dqs} \abs{a_j(q_1)}\mu^{-j}, \quad \nu_0:= \max_{j=1,\ldots,\dps} \abs{a_j(q_0)}\mu_1^{-j},
\eeq
 $n_{0,1}^*\geq 1$ the order of the zero of the fixed rational maps $q_{0,1}^*$ at $z=0$. 
 We remark that  these quantities respect the symmetry  discussed in Remark \ref{rmk:symm} and recall furthermore that $\TT_i$ also controls the 
 $C^k$-norm of the tension, compare \eqref{est:key-HH-Ck}.

The first quantitative estimate we need is a bound on the dual norm $\norm{dE(\zz)}_*:=\sup\{dE(\zz)(w): \norm{w}_\zz=1\}$ of the first variation and in Section \ref{sect:variations} we shall prove 

\begin{lemma}\label{lemma:first-variation}
For any $\zz\in \ZZ_0$  we have
$$\norm{dE(\zz)}_* \leqs \Ctilt \nutot (\log\mu)^{\half}
+\tfrac{\de(\log\fmu)^\half}{\log\mu}+\TT.
$$
\end{lemma}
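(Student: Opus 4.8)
\textbf{Proof strategy for Lemma \ref{lemma:first-variation}.}

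The plan is to estimate $dE(\zz)(w)=\int_{\R^2}\na \zz\cdot\na w\,dx$ for a test field $w$ with $\norm{w}_\zz=1$ by decomposing the domain into the three regions dictated by the construction: the inner/outer ``harmonic'' regions where $\zz$ essentially coincides with $U_1\circ q_1(\tfrac1{\mu\cdot})$ respectively $U_0\circ q_0$, the transition annulus $A^*$ where the geodesic cut-off $\gamma$ and the affine corrections $j_{0,1}$ live, and the central gluing annulus $\hat A$. Since $\zz=\pi_N((1-\varphi_{\hat r})v_1+\varphi_{\hat r}v_0)$, I would first compare $\tau_{g_{S^2}}(\zz)$ to the tension of the un-projected interpolant, absorbing the nearest-point-projection error into lower-order terms, and then write $dE(\zz)(w)=-\int \tau(\zz)\cdot w$ and estimate $\tau(\zz)$ piecewise. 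Away from $A^*\cup\hat A$ the map $\zz$ is a composition $U_i\circ q_i$ of a map in $\HH_1^\si(U_i^*)$ with a rational map, so by \eqref{eq:trafo-tension} its tension is $\tfrac12|\na q_i|^2\tau(U_i)$, whose $L^2$ weight is controlled by $\TT_i\le\TT$ using \eqref{est:key-HH-Ck}; this yields the $\TT$ term.

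The contribution from $A^*$ splits further: the piece coming from $\gamma=\hat\gamma\circ\phi_{r_0,r_1}$ is governed by $\Delta\gamma$, which by the logarithmic profile \eqref{def:phi-mu-la} has $|\na\gamma|\sim \delta\, c_{r_0,r_1}\,|x|^{-1}$ on $A^*$ and $\Delta\gamma$ supported near the two ends $|x|\sim r_1$ and $|x|\sim r_0$; pairing against $w$ and using Cauchy-Schwarz with the weight $\rho_\zz^2$ (noting that on $A^*$ the density $\rho_\zz$ is essentially $|x|^{-1}$ up to the scaling factors) produces a bound of the shape $\delta\, c_{r_0,r_1}\,(\text{number of logarithmic scales in }A^*)^{1/2}$. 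Since $c_{r_0,r_1}=(\log(r_0/r_1))^{-1}$ and $\log(r_0/r_1)\sim \log\mu$ by \eqref{def:ri} and \eqref{ass:fmu}, while the relevant annuli $A^*$ span only $O(\log f_\mu)$ scales, this is exactly $\tfrac{\delta(\log f_\mu)^{1/2}}{\log\mu}$. The correction terms $j_0=-d\beta(0)(q_1(\tfrac1{\mu\cdot}))$ and $j_1=d\beta(0)(q_0)$ are linear in $\beta=U_0-U_1$, hence carry a factor $\norm\beta$, and being polynomial/antipolynomial their energy on the relevant dyadic range is controlled by $\nu_1$ resp. $\nu_0$, with a $(\log\mu)^{1/2}$ loss coming again from summing $O(\log\mu)$ dyadic shells when pairing with $w$ via Cauchy-Schwarz against the weight; this gives the $\norm\beta\,\bar\nu\,(\log\mu)^{1/2}$ term. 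Finally on $\hat A$ the difference $v_0-v_1=\beta(q_\mu)-\beta(0)-d\beta(0)(q_\mu)$ is quadratically small in $|q_\mu|\lesssim \hat r^{n^*}$-type quantities and is subsumed into the $\bar\nu$ and $\delta$ contributions after the same Cauchy-Schwarz bookkeeping; here one uses the choice $\hat r=\mu_0^{1/2}\mu_1^{-1/2}$ which places $\hat A$ at the geometric mean of the two natural scales so that both ends are equally controlled.

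The main obstacle I expect is not any single estimate but the careful bookkeeping of the weighted norms across the three regions: one must check that the weight $\rho_\zz^2=|\na\pi_{\mu_0^{-1}}|^2+|\na\pi_{\mu_1}|^2$ behaves like $|x|^{-2}$ throughout the intermediate annulus (so that the pairing $\int \na\zz\cdot\na w$ is genuinely scale-invariant and the logarithmic factors are sharp and not overcounted), and that the cross terms in $\na((1-\varphi_{\hat r})v_1+\varphi_{\hat r}v_0)$ involving $\na\varphi_{\hat r}$ — which are of size $\hat r^{-1}$ on $\hat A$ — are multiplied by $v_0-v_1$ which is small enough there to beat that factor. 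Keeping all constants dependent only on $U_{0,1}^*,q_{0,1}^*$ (and not on $\mu$, $\si$, $\si_1$) requires systematically invoking Remark \ref{rmk:mu-large-si-small} and the uniform $C^k$-closeness built into $\HH_1^\si$ and $\RR^\si$. Once the piecewise bounds are assembled, summing them gives precisely $\norm{dE(\zz)}_*\lesssim \norm\beta\,\bar\nu(\log\mu)^{1/2}+\tfrac{\delta(\log f_\mu)^{1/2}}{\log\mu}+\TT$, as claimed.
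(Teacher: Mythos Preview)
Your overall strategy---split the contributions to $\tau(\zz)$ into those coming from the non-harmonicity of $U_i$, from $\Delta\gamma$, from the corrections $j_{0,1}$, and from the central interpolation, then pair against $w$ using weighted estimates on annuli---is essentially the paper's approach. However there is a concrete misconception in your description of the construction that would derail the bookkeeping as written.

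You say that away from $A^*\cup\hat A$ the map $\zz$ ``is a composition $U_i\circ q_i$'', and you locate the corrections $j_{0,1}$ on $A^*$. Neither is correct. On $\DD_{r_1}$, for instance, $\zz=\pi_N(v_1)=\pi_N(u_1+j_1)$ with $u_1=U_1\circ q_\mu$ (not $U_1\circ q_{1,\mu}$) and $j_1=d\beta(0)(q_0)$ nonzero; the only piece that vanishes on $\DD_{r_1}$ is $\tilde\gamma_{u_1}$. Consequently $\tau(\zz)\neq\tfrac12|\na q_{1,\mu}|^2\tau(U_1)$ on the inner region. The discrepancy---arising because $P_{\zz_1}$ fails to kill $A(u_1)(\na u_1,\na u_1)$, since $|\zz_1-u_1|\sim|j_1|\lesssim\|\beta\||\hat q_0|$, and from the $d^2\pi_N$ projection error---is precisely what produces the $\|\beta\|\bar\nu(\log\mu)^{1/2}$ term. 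That term comes from $\int_{\Om_1}|\hat q_0|^2\rho_q^2\lesssim\bar\nu^2\log\mu$ over \emph{all} of $\Om_1$, not from $A^*$. The paper avoids this pitfall by decomposing $\tau(\zz_1)$ algebraically into four pieces $T_{u_1}^\tau$, $T_{u_1}^A$, $T_\gamma$, $T_\pi$ (valid on all of $\Om_1$) rather than by region; for the $\gamma$ term the pairing with $w$ uses the trace-type bound $\int_{A^*}r^{-2}|w|\lesssim(\log f_\mu)^{1/2}\|w\|_\zz$, which is where $(\log f_\mu)^{1/2}$ genuinely enters. If you retain your region decomposition but correct the description of $\zz$ on the inner and outer regions you will recover the same estimates, but as stated your ``harmonic region'' step would miss the dominant $\|\beta\|\bar\nu$ contribution.
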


Here and in the following we compute the norms of $\beta=U_0-U_1$ and $\peps\beta$ as norms of the corresponding maps from $S^2$ i.e.~set 
$$\norm{\beta}:=\norm{\beta\circ \pi^{-1}}_{C^2(S^2)} \text{ and } \norm{\peps \beta}:=\norm{\peps \beta\circ \pi^{-1}}_{C^2(S^2)}.$$

In the following we consider variations 
 $\zz_\eps=\zz(U_{0,1}^{(\eps)},q_{0,1}^{(\eps)},\mu)$ in $\ZZ$ which are induced by variations  of
$U_i^{(\eps)}$ in  $\HH_1^\si(U_i^*)$ 
and of $q_i^{(\eps)}$  in $\RR^\si(q_i^*)$. We do not impose the 
assumption that $\abs{a_{n_i}^*(q_i^{(\eps)})}=1$ for $\eps\neq 0$ but allow for general variations of $q_i$ in $\RR^\si(q_i)$ as this allows us to  represent variations of $\mu$ such as $\mu_\eps=\mu(1+\eps)$ as variations of $q_1$ of the form $q_1^{(\eps)}(z)=q_1(\frac{1}{1+\eps} z)$. In the following we hence do not need to consider variations of $\mu$ separately.

To simplify the notation we will drop the index $\eps$ whenever there is no room for confusion and use the convention that $\peps $ is evaluated at $\eps=0$ unless specified otherwise. We  
will only ever consider variations for which 
\beq
\label{ass:variations}
\norm{\peps (\pi\circ q_i\circ\pi^{-1})}_{C^{k}(S^2)}\leqs 1
\text{ and }
\norm{\peps U_i}_{C^{k}(S^2)}
\leqs 1, \quad i=0,1\eeq
as these corresponds to variations in $\ZZ_0$ for which
$\norm{\peps \zz}_\zz\leqs 1$. We note that for such variations also $\norm{\peps \zz}_{L^\infty}\leqs 1$ while $\abs{\peps \mu_i}\leqs \mu_i$.

We associate to each such variation the quantities 
\beqa
\label{def:quantities-var}
\etazero:=\abs{\peps \bbu(0)}+\abs{\peps \bmu(0)} \text{ and }
\etatau:=\norm{\peps \tauS(\bbu\circ \pi^{-1})}_{C^0(S^2)}+\norm{\peps\tauS(\bmu\circ \pi^{-1})}_{C^0(S^2)}
\eeqa
and 
furthermore set $\eta_{rat}=0$ whenever $\peps q_0=\peps q_1=0$ while otherwise we let $\eta_{rat}:=1$ and choose $j^*\geq 1$ so that both $\peps q_0$ and $\peps q_1$ are of order $O(z^{j^*})$ near $z=0$. 

The dual norm $\norm{d^2E(\zz)(\peps \zz, \cdot)}_*:=\sup\{\peps dE(\zz_\eps)(w): w\in \Gamma(\zz^*TN), \norm{w}_\zz=1\}$ of the second variation of $E$ along such variations is controlled by the following lemma which we will prove in Section \ref{sect:variations}.

\begin{lemma}
\label{lemma:second-variation}
For any variation $\zz_\eps$ for which \eqref{ass:variations} holds we can bound  
\beqa \label{est:second-var-case1}
\norm{d^2E(\zz)(\peps \zz, \cdot)}_*
\leqs \big[( \Ctilt+\norm{\peps \beta})\nutot+ \eta_{rat} \mu^{-j^*}\big]
\log(\mu)^{\half} +\tfrac{(\de+\eta_0)(\log\fmu)^\half}{\log\mu}
+\TT+\eta_\TT.
\eeqa

\end{lemma}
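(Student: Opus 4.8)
The plan is to differentiate the estimate of Lemma \ref{lemma:first-variation} along the variation $\zz_\eps$, keeping track of how each term there is affected. More concretely, I would write $\peps dE(\zz_\eps)(w) = d^2E(\zz)(\peps\zz, w) + dE(\zz)(\peps w)$ for a fixed test field $w$, or rather work directly with the expression $\peps\big[dE(\zz_\eps)(W_\eps)\big]$ where $W_\eps$ is the $\eps$-transported version of $w$ along $N$; the point is that $d^2E(\zz)(\peps\zz,\cdot)$ is essentially the $\eps$-derivative of the obstruction $dE(\zz)$, and so the structure of the bound should mirror Lemma \ref{lemma:first-variation}, with each data quantity ($\TT$, $\de$, $\nutot$) replaced by itself plus its derivative ($\eta_\TT$, $\eta_0$, and the $\eta_{rat}\mu^{-j^*}$ and $\norm{\peps\beta}$ contributions). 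First I would localize: on $\DD_{r_1}$, on the central region $\hat A$, on $A^*$, and on $\DD_{r_0}^c$, the map $\zz$ is given by the explicit formulas \eqref{def:zz}, \eqref{def:v1}, \eqref{def:v2}, so $\peps\zz$ is computed termwise from $\peps U_i$, $\peps q_i$, $\peps\gamma$, $\peps\hhompi$, $\peps\modbb$, $\peps\modbm$. Using \eqref{ass:variations} one controls $\norm{\peps\zz}_\zz\leqs 1$ and $\norm{\peps\zz}_{L^\infty}\leqs 1$, as already noted in the text, and $\abs{\peps\mu_i}\leqs\mu_i$, so the radii $r_i, \hat r$ vary at a controlled (logarithmic-in-the-exponent) rate via \eqref{ass:fmu}.

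The bulk of the work is a term-by-term differentiation of the three sources of error that appear in $dE(\zz)$. The first source is the tension of the base and bubble themselves: since $U_i\in\HH_1^\si(U_i^*)$ and the property \eqref{est:key-HH-Ck} propagates, differentiating $\tauS(U_i\circ\pi^{-1})$ produces $\peps\tauS(U_i\circ\pi^{-1})$, controlled in $C^0$ by $\eta_\TT$, which accounts for the $+\eta_\TT$ on the right. The second source is the geodesic-interpolation error on $A^*$, governed by $\de=\dist_N(U_1(0),U_0(0))$ and the cutoff $\phimul$; differentiating in $\eps$ brings in both $\peps\de\leqs\etazero$ and the variation of the radii, but the logarithmic structure $(\log f_\mu)^{1/2}/\log\mu$ is preserved because $\partial_\eps$ of $\log r_i$ stays $O(1)$, giving the $\tfrac{(\de+\eta_0)(\log f_\mu)^{1/2}}{\log\mu}$ term. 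The third source is the mismatch $\hhompi(q_\mu)-\hhompi(0)-d\hhompi(0)(q_\mu)$ between $v_0$ and $v_1$, which is the second-order Taylor remainder of $\beta=U_0-U_1$; it is this term that scales like $\nutot(\log\mu)^{1/2}$ in Lemma \ref{lemma:first-variation}, and differentiating it produces both $\Ctilt\peps\nutot$-type and $(\peps\Ctilt)\nutot = \norm{\peps\beta}\nutot$-type contributions, plus — crucially — a term coming from $\peps q_i$ itself, which since $\peps q_i = O(z^{j^*})$ contributes at scale $\mu^{-j^*}$, yielding the $\eta_{rat}\mu^{-j^*}(\log\mu)^{1/2}$ piece. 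Summing these four contributions over the regions and using the uniform bound \eqref{est:sec-var-general} to absorb lower-order cross terms gives \eqref{est:second-var-case1}.

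The hard part will be the bookkeeping of the mismatch term on the central annulus $\hat A$ and the adjacent dyadic regions: there the interpolation $(1-\varphi_{\hat r})v_1+\varphi_{\hat r}v_0$ and the projection $\pi_N$ are both active, and one must show that differentiating the projection (whose derivative involves the second fundamental form of $N$) does not destroy the logarithmic gain. One must verify that $\peps\varphi_{\hat r}$, which is supported on $\hat A$ and of size $\leqs\abs{\peps\log\hat r}\leqs 1$, only multiplies quantities of size $\nutot$ there (by the defining property \eqref{def:ri} of $\hat r$ as the geometric mean of the two natural scales, where the base- and bubble-errors balance), so that the extra term from varying the gluing locus is again $\leqs\nutot\log(\mu)^{1/2}$. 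A secondary technical point is that the non-integrable-Jacobi-field case forces us to allow $U_i$ non-harmonic, so $\tauS(U_i)\neq 0$; but \eqref{est:key-HH-Ck} and its inherited version for $\HH_1^\si(U_i^*)$ give exactly the $C^k$-bound on $\tauS(U_i)$ and on $\peps\tauS(U_i)$ needed to keep these contributions inside the $\TT+\eta_\TT$ budget. Once these regional estimates are assembled, the lemma follows by the triangle inequality for $\norm{\cdot}_*$ and the scaling relations $\mu_1\sim\mu$, $\mu_0\sim 1$.
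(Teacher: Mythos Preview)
Your proposal is correct and follows essentially the same route as the paper: one writes $d^2E(\zz)(\peps\zz,w)=\int \peps\big(\tau(\zz)\big)\,w$, decomposes $\tau(\zz)$ on $\Om_1$ into the tension contribution $T_{u_1}^\tau$, the second-fundamental-form correction $T_{u_1}^A$, the geodesic term $T_\gamma$, the projection term $T_\pi$, and the cut-off error from $\errcut$, and then bounds $\int|\peps T|\,|w|$ for each piece using the weighted annular estimates (the analogues of \eqref{est:weighted-norm-annulus}, \eqref{est:weighted-norm-A-star}) together with the pointwise bounds on $\peps\gamma$, $\peps j_1$, $\peps\errcut$ already recorded in Section~\ref{subsec:technical}. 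One small caveat: the general bound \eqref{est:sec-var-general} only gives $\norm{d^2E(\zz)(\peps\zz,\cdot)}_*\leqs 1$ and cannot be used to ``absorb lower-order cross terms'' as you suggest---every contribution must be estimated directly against $R_4+\TT+\eta_\TT$, which is exactly what your regional bookkeeping does.
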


We note that for changes of $U_i$ that are induced by translations on the domain we can bound $\eta_\tau\leqs \TT$ since 
$\TT_i$ controls $C^k$ norm of the tension  of $U_i$, compare \eqref{est:key-HH-Ck}. For more general 
variations in $\HH_1^\si(U_i^*)$  we can instead use 
the continuity of the Jacobi operator and the fact that  
elements of 
$T_\uius\HHz(\uius)$ are Jacobifields along $\uius$ to see that 
\beq
\label{est:eta-tau}
\eta_\TT\leqs \TT+\norm{P_{\bbu}(\peps\tauS(U_{1,\eps}\circ \pi^{-1}))}_{C^0(S^2)}+
\norm{P_{\bbu}(\peps\tauS(U_{0,\eps}\circ \pi^{-1}))}_{C^0(S^2)}
\eeq
will be smaller than any given positive constant if $\si>0$ is chosen sufficiently small.

In addition we need to understand the behaviour of $E$ on $\ZZ$ and we will show  
\begin{lemma}\label{lemma:general-energy-expansion}
Let $U_{0,1}^*$ be any harmonic maps from $S^2$ into a smooth Riemannian manifold $N$ of any dimension, let $q_{0,1}^*:\Chat\to \Chat $ be any rational maps with $q_{0,1}^*(0)=0$ and let $\ZZ_0$ be the corresponding set of singularity models defined in Section \ref{sect:Z}. Then we have 
\beqa 
\label{eq:en-exp-nice}
dE(\zz)(\peps \zz)&=\sum_{i=0,1} \int_{\Om_i}j_i\peps \Delta u_{i}
+\pi c_{r_0,r_1} \tfrac{d}{d\eps}\de(U_{0},U_{1})^2+\sum_{i=0,1} \deg(q_i) \tfrac{d}{d\eps}E(U_i)  +\err
\eeqa 
for any variation 
$\zz_\eps$ in $\ZZ_0$ which satisfies \eqref{ass:variations}. Here 
for $\Om_1:=\DD_{\hat r}=\Om_0^c$, $c_{r_0,r_1}= (\log(\frac{r_0}{r_1}))^{-1}$ and $\err$ denotes an error term that is bounded by $\abs{\err}\leqs R_0$ for 
\beqa
\label{est:en-exp-nice-error}
R_0:=&
 \nutot  \Ctilt\big[
 \mu^\mhalf
+\etazero +\eta_\tau \big] 
+\tfrac{\de}{\log\mu}\big[\de+ \fmu^{-2}+\eta_0 [(\log\mu)^{-1}+\fmu^{-1}]\big] \\
&+\TT\big[\mu^{-1}\log\mu+\etaz (\log\mu)^{-1}\fmu^{-1}\big] .
\eeqa
\end{lemma}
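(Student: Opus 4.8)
The plan is to compute $dE(\zz)(\peps\zz)$ by expanding the energy of $\zz=\pi_N\big((1-\varphi_{\hat r})v_1+\varphi_{\hat r}v_0\big)$ directly from the definitions \eqref{def:v1}, \eqref{def:v2}, \eqref{def:zz}, carefully tracking which terms are of leading order and which are absorbed into $R_0$. First I would use the fact that on $\Om_1=\DD_{\hat r}$ the map $\zz$ agrees with $\pi_N(v_1)$ and on $\Om_0=\DD_{\hat r}^c$ with $\pi_N(v_0)$, the transition happening only on the central annulus $\hat A=\DD_{2\hat r}\setminus\DD_{\hat r/2}$ where $v_0-v_1$ is a genuine second-order error of size $O(|\hhompi(\rmmu)-\hhompi(0)-d\hhompi(0)(\rmmu)|)$; so the contribution of $\hat A$ to $dE(\zz)(\peps\zz)$ is controlled using $\nutot$, $\de$, and the smallness of $\hat r$, and goes into $R_0$. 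On each of $\Om_0$ and $\Om_1$ I would write $dE(\pi_N(v_i))(\peps\zz)=\langle \tauS(\pi_N(v_i)),\peps\zz\rangle$ up to boundary terms, then use that $v_i=u_i+\gamma_{\,\bullet}+j_i$ with $u_i=U_i\circ q_i$ harmonic, so $\Delta u_i=-A(u_i)(\na u_i,\na u_i)$ and the tension of $v_i$ comes from the added terms $\gamma-U_i(0)$ (harmonic off $A^*$) and the linear correction $j_i$.

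The key mechanism producing the three explicit terms in \eqref{eq:en-exp-nice} is the following. The term $\sum_i \deg(q_i)\,\ddeps E(U_i)$ arises because, by the conformal invariance \eqref{eq:trafo-tension} and the change of variables $z\mapsto q_i(z)$, the Dirichlet energy of $U_i\circ q_i$ on the region where $\zz$ is essentially $U_i\circ q_i$ equals $\deg(q_i)E(U_i)$ up to errors localised near $z=0$ (for $i=0$) or $z=\infty$ (for $i=1$), which are of size controlled by $\nutot$ and hence absorbed; differentiating gives the stated term. The term $\pi c_{r_0,r_1}\ddeps\de(U_0,U_1)^2$ is the Dirichlet energy of the logarithmic transition function $\gamma=\hat\gamma\circ\phimul$ on the annulus $A$: since $\phimul$ is the harmonic (logarithmic) cutoff with $c_{r_0,r_1}=(\log(r_0/r_1))^{-1}$ and $\hat\gamma$ is the minimising geodesic of length $\de$, a direct computation gives $\half\int_A|\na\gamma|^2\approx \pi c_{r_0,r_1}\de^2$ up to the stated errors, and differentiating in $\eps$ produces the middle term (noting $\peps\de^2$ and that $\peps c_{r_0,r_1}$ contributes through $\mu|\partial_\mu f_\mu|\le\fmu$, hence goes into $R_0/\log\mu$ after bounding). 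Finally the terms $\int_{\Om_i}j_i\,\peps\Delta u_i$ are exactly the cross terms between the linear correction $j_i=\pm d\hhompi(0)(q_{1-i}(\ldots))$ and the variation of the harmonic part; these are the terms that genuinely couple base and bubble and cannot be made smaller, so they are kept explicit.

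The main bookkeeping steps, in order, are: (i) split $E(\zz)=\int_{\Om_1}+\int_{\hat A}+\int_{\Om_0\setminus\hat A}$ and dispose of $\hat A$ and the nearest-point-projection error $\pi_N(w)-w$ using $\de,\nutot,\hat r$ small; (ii) on $\Om_i\setminus\hat A$ expand $|\na\pi_N(v_i)|^2$ into $|\na u_i|^2 + 2\na u_i\!\cdot\!\na(\gamma-U_i(0)) + 2\na u_i\!\cdot\!\na j_i + $ (quadratic and higher), and likewise differentiate in $\eps$; (iii) recognise $\int|\na u_i|^2$ as $2\deg(q_i)E(U_i)$ plus tail errors via the change of variables; (iv) recognise $\half\int_A|\na\gamma|^2$ as $\pi c_{r_0,r_1}\de^2$ plus errors; (v) integrate by parts in the cross term with $j_i$ to produce $\int_{\Om_i}j_i\,\peps\Delta u_i$, checking the boundary terms on $\partial\DD_{\hat r}$ cancel between $i=0$ and $i=1$ up to $R_0$; (vi) collect every discarded term and verify it is bounded by $R_0$ as in \eqref{est:en-exp-nice-error}, using \eqref{ass:variations}, \eqref{est:key-HH-Ck}, and the decay rates $|q_0(z)|\leqs\nu_0$ on $\DD_{r_0}$, $|q_1(\tfrac1{\mu z})|\leqs\nu_1$ on $\DD_{r_1}^c$. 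The hardest part will be step (vi): showing that all the error terms — in particular the cross terms between $\gamma$, $j_i$, the projection error, and the $\eps$-derivatives thereof, each supported on a different dyadic region — assemble exactly into the claimed bound $R_0$, with the correct interplay of the factors $\Ctilt$, $\etazero$, $\eta_\tau$, $\fmu^{-1}$, and the powers of $\log\mu$. This requires the annular $L^2$–$L^\infty$ estimates and the precise choice of radii \eqref{def:ri}, and is presumably where the conformal symmetry of Remark \ref{rmk:symm} is used to halve the work by treating $i=0$ and $i=1$ symmetrically.
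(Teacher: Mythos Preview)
Your overall strategy is correct and matches the paper's approach: split into $\Om_0$, $\Om_1$, control the transition on $\hat A$, expand around $v_i$, and identify the three leading terms with everything else going into $R_0$. The paper organises the computation through $dE(\zz)(\peps\zz)=-\int\peps\zz\,\Delta\zz$ and the projection expansion $\zz_1=\bb+P_{\bb}(\tgabb+\modbb)+\errom$ rather than through the energy density $\abs{\na\pi_N(v_i)}^2$, which produces a concrete list of error terms $T_1^{(1)},\ldots,T_1^{(6)},T_2,B_1$ to be bounded one by one (this is exactly your step (vi)); the symmetry of Remark~\ref{rmk:symm} is indeed used to treat $\Om_0$ by the same argument.

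There is, however, one definitional slip that would derail the bookkeeping if left uncorrected: in the construction the maps are $u_i=U_i\circ q_\mu$ with the \emph{combined} rational map $q_\mu(z)=q_0(z)+q_1(\tfrac{1}{\mu z})$, not $u_i=U_i\circ q_i$. Consequently $u_i$ is not harmonic even when $U_i$ is, because $\tau(u_i)=\tfrac12\abs{\na q_\mu}^2\tau(\hat U_i)\circ\hat q_\mu$ carries the $\TT$ contribution; and the cross term $\int_{\Om_1}j_1\,\peps\Delta u_1$ genuinely involves $\peps\Delta(U_1\circ q_\mu)$, not $\peps\Delta(U_1\circ q_{1,\mu})$. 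The identification of $-\int_{\Om_1}\peps u_1\,\tau(u_1)$ with $\deg(q_1)\,\ddeps E(U_1)$ up to $R_0$ then requires the additional step of comparing $u_1=U_1\circ q_\mu$ with $U_1\circ q_{1,\mu}$ on $\Om_1$ (this is the $\err_3,\err_4$ computation in the paper). Once you correct $u_i$ to $U_i\circ q_\mu$ and keep track of the resulting $\TT$ terms, your plan goes through essentially as the paper's proof does.
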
 

As $c_{r_0,r_1} \sim (\log\mu)^{-1}$ and as we shall see that 
$\abs{\int_{\Om_i} j_i\peps \Delta u_i}\leqs \norm{\beta} (\nu_i^*+\mu^\mhalf\nutot)$ for  $\nu_{1^*}=\nu_0$ and $\nu_{0^*}=\nu_1$
this  ensures in particular that
\beq
\label{est:var-E-always}
\abs{\peps E(\zz_\eps)}\leqs \norm{\beta} \nutot+\tfrac{\de}{\log\mu}+\TT
\eeq
for any variation $\zz_\eps$ in $\ZZ$ for which $\norm{\peps \zz}\leqs 1$. 
For variations induced by only a change of $q_1$ we obtain an improved bound of 
\beq
\label{est:var-E-always-rational}
\abs{\peps E(\zz_\eps)}\leqs \norm{\beta}\nu_0+
\Ctilt\nu_1\mu^{\mhalf} +\tfrac{\de^2}{\log\mu}+\tfrac{\de}{f_\mu^{2}\log\mu} +\TT \mu^{-1}\log\mu.
\eeq
The same bound also applies to   
$
\mu\abs{\partial_\mu E(\zz(U_{0,1},q_{0,1},\mu)} 
$
since variations of $\mu$ can be represented by variations of $q_1$.
On the other hand, for variations of $U_i$ that are induced by translations on the domain we obtain that
\beq\label{est:var-E-always-Efixed}
\abs{\peps E(\zz_\eps)}\leqs \nutot+\tfrac{\de}{\log\mu}+\tfrac{\TT}{\log\mu}.
\eeq
To prove our main results we now want to identify variations 
for which $dE(\zz)(\peps \zz)$ scales like the dominating term in the above estimates. 

To state these lemmas in a way that makes them applicable to all our main results we recall that we always deal with settings for which the following assumption is satisfied. 
\begin{ass}
\label{ass:Ui}
We ask that  $U_{0,1}^*:\Chat\to N$ are harmonic maps with $U_0^*(0)=U_1^*(0)$ and 
 $d U_{0,1}^*(0)\neq 0$ for which one of the following holds
\begin{enumerate}[topsep=0pt]
\item[(i)]\label{ass:Ui-1} We have $U_0^*=U_1^*$ or $U_0^*(z)=U_1^*(\bar z)$  and these maps are non-degenerate critical points  \\
or
\item[(ii)]\label{ass:Ui-2} The target manifold $N$ is $3$ dimensional and the tangent spaces 
 $T_{U_i^*(0)}U_i^*(\Chat)$ do not coincide.
\end{enumerate}
\end{ass}

For elements $\zz\in\ZZ_0$ for which $\norm{\beta}\nu_0$, respectively $\norm{\beta}\nu_1$, is large compared to the error terms that involve $\de$ and $\TT$, we will want to consider variations that are induced by a suitable variations of the rational map $q_1$, respectively $q_0$.
This case only occurs if $U_1^*\neq U_0^*$ and  in 
Section \ref{sec:main-term} we shall prove

\begin{lemma}\label{lemma:energy-expansion-rational}
Let $U_{0,1}^*$ be harmonic maps with $U_0^*\neq U_1^*$ for which Assumption \ref{ass:Ui} holds, let $q_{0,1}^*\in \RR$ and let $\ZZ_0$ be the corresponding set of singularity models defined in Section \ref{sect:Z}.

Then there exists a constant $c_1>0$ so that  
for every $\zz\in \ZZ_0$ and 
each $i\in\{0,1\}$
there is  a variation $\zz_\eps$ that is induced by a variation of $q_i$ which satisfies \eqref{ass:variations} for which  
\beq 
\label{claim:energy-expansion-case1}
dE(\zz)(\peps \zz_\eps)\geq c_1 \sitilt^2\nu_{i^*}  -C\big[ \Ctilt\nutot\mu^{\mhalf} +\tfrac{\de^2}{\log\mu}+ 
\tfrac{\de}{f_\mu^{2}\log\mu} +\TT \mu^{-1}\log\mu\big],
\eeq
where we write for short 
$\nu_{0^*}=\nu_1$ and $\nu_{1^*}=\nu_0$.
\end{lemma}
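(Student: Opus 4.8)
\textbf{Proof proposal for Lemma \ref{lemma:energy-expansion-rational}.}

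The plan is to compute $dE(\zz)(\peps\zz_\eps)$ for a carefully chosen variation $\zz_\eps$ induced only by a variation of $q_i$, starting from the general energy expansion \eqref{eq:en-exp-nice}. Since the variation changes only $q_i$ and keeps $U_{0,1}$ fixed, the terms $\pi c_{r_0,r_1}\tfrac{d}{d\eps}\de(U_0,U_1)^2$ and $\sum_i \deg(q_i)\tfrac{d}{d\eps}E(U_i)$ vanish, and the error term $R_0$ from \eqref{est:en-exp-nice-error} collapses (with $\eta_0=\eta_\tau=0$) to precisely the bracket on the right-hand side of \eqref{claim:energy-expansion-case1}. The whole lemma therefore reduces to producing a direction $\peps q_i$ satisfying \eqref{ass:variations} for which the remaining term $\sum_{i=0,1}\int_{\Om_i} j_i\,\peps\Delta u_i$ is bounded below by $c_1\sitilt^2\nu_{i^*}$, up to errors already absorbed in $R_0$. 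Here $\sitilt = \abs{d\beta(0)}$ measures the first-order discrepancy between $U_0$ and $U_1$ at the attaching point, which is comparable to $\abs{d\beta(0)}\sim\abs{dU_0^*(0)-dU_1^*(0)}$ and is bounded below by a positive constant exactly because $U_0^*\neq U_1^*$ under Assumption \ref{ass:Ui}: in case (i) the two maps agree identically or differ by a conjugation, so their differentials at $0$ genuinely differ (or we are in case (ii) where the tangent planes differ). Note $\modbb = d\beta(0)(q_0(z))$ and $\modbm = -d\beta(0)(q_1(\tfrac{1}{\mu z}))$ are the pieces of $v_0,v_1$ that carry this discrepancy, so a variation of $q_i$ moves exactly the term that one expects to see in the integral.

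Concretely, I would treat the case $i=1$ (so $\nu_{i^*}=\nu_0$) and, by symmetry of the construction (Remark \ref{rmk:symm}), deduce $i=0$ by interchanging base and bubble via $z\mapsto\frac{\mu_0}{\mu_1 z}$. Recall $\nu_0 = \max_{j=1,\dots,\dps}\abs{a_j(q_0)}\mu_1^{-j}$; pick the index $j_0$ realising this maximum and let $\zz_\eps$ be induced by the variation $q_1^{(\eps)}(w) = q_1(w) + \eps\, c\, a_{j_0}(q_0)\, \mu_1^{-j_0}\, w^{j_0}$ of the bubble's rational map, with $c$ a fixed constant (and unit modulus phase) chosen so that the leading interaction integral has the right sign; this clearly satisfies \eqref{ass:variations} since the coefficients are $O(\nu_0)=o(1)$. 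Substituting into $\int_{\Om_1} j_1\,\peps\Delta u_1$ over $\Om_1=\DD_{\hat r}$, the dominant contribution comes from the overlap of $\peps\Delta u_1$ (supported essentially near scale $\mu^{-1}$) against the component $j_1 = d\beta(0)(q_0(z))$ of $v_1$; expanding $q_0(z)=a_{j_0}(q_0)z^{j_0}+\dots$ near $z=0$ and changing variables to $w=\mu z$, the integral is computed explicitly as a convergent integral over $\C$ of an expression involving $dU_1^*$ and $d\beta(0)$, producing a term $\gtrsim \abs{d\beta(0)}^2\abs{a_{j_0}(q_0)}\mu_1^{-j_0}\cdot\abs{a_{j_0}(q_0)}\mu_1^{-j_0}\cdot(\dots)$—wait, more carefully, one factor of the coefficient comes from $j_1$ and one from $\peps q_1$, giving $\sitilt^2\abs{a_{j_0}(q_0)}^2\mu_1^{-2j_0}$ unless the normalisation is arranged to yield the stated $\sitilt^2\nu_0$; I would normalise the variation so that $\abs{\peps q_1^{(\eps)}}$ is of unit size at the relevant scale, i.e. scale the coefficient up by $\mu_1^{j_0}/\abs{a_{j_0}(q_0)}$ subject to \eqref{ass:variations}, which then cleanly gives the lower bound $c_1\sitilt^2\nu_0$. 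All remaining cross-terms (involving $\gabb$, $\modbb$ beyond leading order, the cut-off $\varphi_{\hat r}$, and the projection $\pi_N$) are lower order in $\mu^{-1}$ or carry an extra factor of $\de$ or $\TT$, hence land inside the error bracket of \eqref{claim:energy-expansion-case1}.

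The main obstacle I anticipate is twofold. First, making the sign of the leading interaction integral robust: the explicit integral over $\C$ of the product of the bubble profile's derivative against the polynomial perturbation must be shown to be a fixed nonzero quadratic form in $d\beta(0)$, uniformly over $U_1\in\HH_1^\si(U_1^*)$ and $q_1\in\RR^\si(q_1^*)$—this is where smallness of $\si$ (Remark \ref{rmk:mu-large-si-small}) and the openness of the nondegeneracy/transversality condition in Assumption \ref{ass:Ui} enter, guaranteeing $\sitilt\gtrsim 1$ and that the quadratic form stays definite. Second, one must be careful that the error terms generated by differentiating the nonlinear gluing (the projection $\pi_N$ in \eqref{def:zz}, the interpolation on $\hat A$, and the correction terms $\modbm,\modbb$) really are dominated by $R_0$ and do not eat into the main term; this is a matter of bookkeeping with the radii \eqref{def:ri} and the bounds \eqref{ass:fmu}, but the region where $v_0$ and $v_1$ are glued, at scale $\hat r = \mu_0^{1/2}\mu_1^{-1/2}$, is exactly tuned so that both $\abs{z}$ and $\abs{\mu z}^{-1}$ are of order $\mu^{-1/2}$ there, forcing those contributions to be $O(\Ctilt\nutot\mu^{-1/2})$ as required. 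I would organise the proof so that the clean integral identity is isolated as a sub-computation and the error estimation is reduced to invoking the pointwise bounds on $j_i$, $\peps\Delta u_i$ and the already-established control of $\norm{dE(\zz)}_*$.
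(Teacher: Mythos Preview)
Your overall strategy matches the paper's: reduce to \eqref{eq:en-exp-nice} with $\eta_0=\eta_\tau=0$, use the symmetry of Remark~\ref{rmk:symm} to treat only $i=1$, observe that $\int_{\Om_0}j_0\,\peps\Delta u_0$ is lower order, and then extract a lower bound for $\int_{\Om_1}j_1\,\peps\Delta u_1$ by choosing a variation of $q_1$ concentrated in a single Fourier mode $j_0$. Your self-corrected normalisation (unit-size coefficient for $\peps q_1$ at mode $j_0$) is also what the paper uses. However, two points need real work beyond what you have sketched.

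First, the sign issue is the heart of the matter and your proposal does not resolve it. The leading term is not a quadratic form in $d\beta(0)$; after Fourier decomposition on circles it factors as $I_j^H(\mu)\cdot I_j^\theta$ with
\[
I_j^\theta(\alpha,U_0,U_1)=\int_{S^1} d\beta(0)(e^{ij\theta})\cdot dU_1(0)(e^{-ij(\theta-\alpha)})\,d\theta,
\]
which is bilinear in $d\beta(0)$ and $dU_1(0)$ and whose sign depends on the phase $\alpha$ of the perturbation coefficient. The paper proves a separate Lemma~\ref{lemma:choosing-theta} showing that a single phase $\alpha^*$ can be chosen so that $-I_j^\theta(\alpha^*,U_0^*,U_1^*)\geq 2c^*>0$ for \emph{all} $j$, and this is precisely where Assumption~\ref{ass:Ui} enters: in case~(i) with $U_0^*(z)=U_1^*(\bar z)$ one computes $-I_j^\theta(\pi,U_0^*,U_1^*)=\pi|dU_0^*(0)|^2$ directly using conformality, while in the transversal $3$-manifold case~(ii) one chooses coordinates aligned with the intersection of the tangent planes and argues via the column structure of $dU_0^*(0)$. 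Without this lemma you cannot guarantee that your ``fixed constant $c$ with the right sign'' exists.

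Second, two details of your variation need adjustment. Picking $j_0$ as the plain maximiser of $|a_j(q_0)|\mu^{-j}$ is not enough: the perturbation $(r_1+\eps(e^{i\theta^*}z)^{j_0})/r_2$ produces nonzero $a_j(\peps q_1)$ for $j>j_0$, and for those $j$ the phase $\alpha_j\neq\alpha^*$, so the corresponding $I_j^\theta$ may have the wrong sign. The paper therefore chooses $j_0$ via the weighted condition $|a_j(q_0)|\mu^{-j}\Lambda_1^{j}\leq |a_{j_0}(q_0)|\mu^{-j_0}\Lambda_1^{j_0}$ with $\Lambda_1$ large, which forces the higher-$j$ contributions to be dominated. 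Also, your direct addition $q_1+\eps c\,w^{j_0}$ does not in general satisfy \eqref{ass:variations} (the variation is unbounded as $w\to\infty$ on $S^2$); the paper's numerator perturbation $q_1^{(\eps)}=(r_1+\eps(e^{i\theta^*}z)^{j_0})/r_2$ is chosen precisely so that the resulting variation of $\pi\circ q_1\circ\pi^{-1}$ is uniformly $C^k$-bounded.
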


Conversely, if $\frac{\de}{\log\mu}$ is large compared to $\norm{\beta}\nutot$ and $\TT$ we want to consider variations which reduce $\de$. To this end we can use 

\begin{lemma}
\label{rmk:de}
Let $U_{i}^*$ be as in Assumption \ref{ass:Ui} and let $U_{0,1}\in \HH_1^\si(U_{0,1}^*)$. Then there exists a variation of the form  $U_{i,\eps}=U_i(\cdot +\eps a)$, $a\in \R^2$ 
with $\abs{a}\leqs 1$ so that 
\beq
\tfrac{d}{d\eps}\de(U_{0},U_{1})^2=\de(U_{0},U_{1}),
\eeq
where for settings for which the first part of Assumption \ref{ass:Ui} is satisfied, we can freely choose which of $U_0$ or $U_1$ we want to vary, while  for settings satisfying the second part of Assumption \ref{ass:Ui} the above holds true for at least one of $i\in\{0,1\}$.
\end{lemma}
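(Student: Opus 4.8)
The plan is to reduce the statement to an elementary fact about the function $\delta(U_0,U_1)^2 = \dist_N(U_0(0),U_1(0))^2$ on a suitable finite-dimensional space of maps, and to exhibit the required variation by translating one of the two maps on the domain. First I would fix $U_{0,1}\in \HH_1^\si(U_{0,1}^*)$ and recall that, since $d U_i^*(0)\neq 0$ and the maps in $\HH_1^\si(U_i^*)$ are $C^k$-close to $U_i^*$, the differentials $dU_i(0)$ are still injective, so the maps $a\mapsto U_i(\cdot+\eps a)|_{\eps=0}$ move the base point $U_i(0)$ with nonvanishing speed: concretely $\tfrac{d}{d\eps}U_i(\eps a)(0)|_{\eps=0}=dU_i(0)(a)$, which ranges over the tangent plane $T_{U_i(0)}U_i(\Chat)$ as $a$ ranges over $\R^2$. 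If $\delta=\delta(U_0,U_1)=0$ there is nothing to prove (both sides vanish and any $a$ works, with the convention $0=0$), so we may assume $\delta>0$ and the two base points are distinct but $\delta$-close, hence joined by a unique minimising geodesic $\hat\gamma$ as already noted after \eqref{def:gamma-tilde}.

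The key computation is the first variation of the squared distance: for a curve $t\mapsto p(t)$ in $N$ with $p(0)=U_i(0)$ we have $\tfrac{d}{dt}\dist_N(p(t),U_{1-i}(0))^2|_{t=0} = -2\langle \dot p(0), v_i\rangle$, where $v_i\in T_{U_i(0)}N$ is the unit initial velocity at $U_i(0)$ of the minimising geodesic $\hat\gamma$ running from $U_i(0)$ to $U_{1-i}(0)$, times $\dist_N=\delta$; that is, the gradient of $\dist_N(\cdot,U_{1-i}(0))^2$ at $U_i(0)$ is $-2\delta\, v_i$. Therefore, taking the variation $U_{i,\eps}=U_i(\cdot+\eps a)$ we get
\[
\tfrac{d}{d\eps}\delta(U_{0},U_{1})^2\big|_{\eps=0}= -2\delta\,\langle dU_i(0)(a), v_i\rangle .
\]
To make this equal $\delta$ it suffices to choose $a\in\R^2$ with $\langle dU_i(0)(a), v_i\rangle = -\tfrac12$, which is solvable with $|a|\leqs 1$ precisely when the orthogonal projection of $v_i$ onto the plane $T_{U_i(0)}U_i(\Chat)=\operatorname{im}dU_i(0)$ is non-zero, with the bound $|a|$ depending only on the (uniformly bounded below) size of that projection and of $dU_i(0)$, hence only on $U_i^*$.

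It remains to verify that for at least one $i$ this projection is non-zero. Under the first part of Assumption \ref{ass:Ui}, $U_0$ and $U_1$ parametrise (locally near $0$) the \emph{same} minimal surface, possibly with opposite orientation, so $T_{U_0(0)}U_0(\Chat)=T_{U_1(0)}U_1(\Chat)$ up to the $O(\delta)$ error coming from the fact that the two base points differ; since the connecting geodesic $\hat\gamma$ stays in $N$ and its endpoints lie on this common surface, $v_i$ has a component of size $1-O(\delta)$ in the tangent plane, so the projection is non-zero for \emph{either} choice of $i$ once $\si$ (hence $\delta$) is small — this gives the "freely choose" clause. Under the second part (the $3$-manifold, transverse case), $T_{U_0(0)}N$ is $3$-dimensional and the two tangent planes $T_{U_i(0)}U_i(\Chat)$ are distinct $2$-planes, so their union spans $T N$; the geodesic direction $v_i$ cannot be simultaneously orthogonal to both planes (that would force $v_i=0$), hence its projection onto at least one of them is non-zero, which yields the weaker "at least one $i$" conclusion. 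The main obstacle is purely bookkeeping: one must check that all the "$O(\delta)$" perturbations (the tilt of the tangent plane between the two nearby base points, and the closeness of $U_i$ to $U_i^*$) are genuinely controlled by $\si$ uniformly, so that the lower bound on the projection — and hence the upper bound $|a|\leqs 1$ — depends only on $U_{0,1}^*$; this is exactly the regime guaranteed by Remark \ref{rmk:mu-large-si-small}, so no genuine difficulty arises beyond making these smallness statements precise.
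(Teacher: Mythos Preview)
Your proposal is correct and follows essentially the same strategy as the paper: compute the first variation of $\delta^2$ under a domain translation, reduce to showing that the unit tangent of the connecting geodesic has a projection onto $T_{U_i(0)}U_i(\Chat)$ that is bounded below, and then verify this case by case under Assumption~\ref{ass:Ui}. In case~(ii) your spanning argument (two distinct $2$-planes in a $3$-space cannot share a common normal) is exactly the paper's angle argument, just phrased qualitatively; the paper makes the uniform lower bound explicit via the fixed angle $\bar\alpha$ between the tangent planes of $U_0^*$ and $U_1^*$, which is precisely the quantitative input you defer to ``bookkeeping''.

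The one place where the paper proceeds a little differently is case~(i). Rather than arguing geometrically that the ambient geodesic between two nearby points on the common surface is almost tangent to it, the paper exploits the non-degeneracy assumption directly: since $\HH_1^\si(U_i^*)$ consists only of translates $U_0^*(\cdot+c_i)$, one has $U_i(0)=U_0^*(c_i)$ and can simply translate $c_1$ toward $c_0$ along the segment $\tfrac{c_0-c_1}{|c_0-c_1|}$, which makes the required derivative bound immediate from $dU_0^*(0)\neq 0$. Your route via the second fundamental form estimate on the geodesic works too and is slightly more robust (it does not use the explicit form of $\HH_1^\si$), at the cost of one extra $O(\delta)$ estimate.
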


For such variations we then obtain

\begin{lemma}\label{lemma:energy-expansion-rotation}
Let $U_{0,1}^*$ be as in Assumption \ref{ass:Ui}, let $q_{0,1}^*$ be any maps in $\RR$ and let $\ZZ_0$ be the corresponding set of singularity models defined in Section \ref{sect:Z}. Then 
\beqa
dE(\zz)(\peps \zz)\geq \tfrac{\de}{\log\mu}- C   \big(\Ctilt
\nutot+\TT
(\log\mu)^{-1}f_\mu^{-1}\big)
\eeqa
for 
 the 
 variation $\zz_\eps$ in $\ZZ_0$ that is induced by changing $U_i$ as described in Lemma \ref{rmk:de}.
\end{lemma}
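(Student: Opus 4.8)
\textbf{Proof proposal for Lemma \ref{lemma:energy-expansion-rotation}.}

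The plan is to apply the general energy expansion from Lemma \ref{lemma:general-energy-expansion} to the specific variation $\zz_\eps$ produced by Lemma \ref{rmk:de}, and to check that for this choice the main term $\pi c_{r_0,r_1}\frac{d}{d\eps}\de(U_0,U_1)^2$ is the dominant contribution, with everything else absorbed into the stated error. First I would recall that the variation is of the form $U_{i,\eps}=U_i(\cdot+\eps a)$ with $\abs{a}\leqs 1$ and $\peps q_0=\peps q_1=0$, so that $\eta_{rat}=0$ and $j^*$ plays no role. By Lemma \ref{rmk:de} this variation satisfies $\frac{d}{d\eps}\de(U_0,U_1)^2=\de(U_0,U_1)=\de$, and since $c_{r_0,r_1}=(\log(r_0/r_1))^{-1}\sim(\log\mu)^{-1}$ (using $r_1\sim\mu^{-1}f_\mu$, $r_0\sim\mu_0 f_\mu^{-1}$ and the bound $\frac{\log f_\mu}{\log\mu}\leq\si_1$ from \eqref{ass:fmu}), the middle term in \eqref{eq:en-exp-nice} is comparable to $\frac{\de}{\log\mu}$; I would keep track of the constant so that after possibly shrinking it this term is at least, say, $\frac{\de}{\log\mu}$ up to the error.

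The remaining terms in \eqref{eq:en-exp-nice} must all be shown to be $\leqs \Ctilt\nutot+\TT(\log\mu)^{-1}f_\mu^{-1}$. For the Euler--Lagrange-type terms $\sum_i \int_{\Om_i} j_i\,\peps\Delta u_i$ I would invoke the pointwise bound stated just after Lemma \ref{lemma:general-energy-expansion}, namely $\abs{\int_{\Om_i} j_i\,\peps\Delta u_i}\leqs\norm{\beta}(\nu_{i^*}+\mu^{\mhalf}\nutot)\leqs\Ctilt\nutot$ (here using $\norm{\beta}=\Ctilt$, that $\nu_{i^*}\leq\nutot$, and $\mu^{\mhalf}\leq 1$). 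For the term $\sum_i\deg(q_i)\frac{d}{d\eps}E(U_i)$, since the variation is a pure translation on the domain, I would use \eqref{est:var-E-always-Efixed}, or more directly the observation that $\frac{d}{d\eps}E(U_{i,\eps})=dE(U_i)(\peps U_{i})=\int\tauS(U_i)\cdot\peps U_i$ (in stereographic/$S^2$ coordinates), which is bounded by $\norm{\tauS(U_i\circ\pi^{-1})}_{L^2}\norm{\peps U_i}_{L^2}\leqs\TT$; this is slightly worse than the claimed $\TT(\log\mu)^{-1}f_\mu^{-1}$, so the real point is that this translation term must \emph{cancel} against a matching piece hidden in the error $R_0$ or in the $\int_{\Om_i}j_i\peps\Delta u_i$ terms — more precisely, the leading translation contribution to $dE(\zz)(\peps\zz)$ is itself of the form $\deg(q_i)\frac{d}{d\eps}E(U_i)$ plus genuinely small corrections, so in the final assembly the $\TT$-sized pieces coming from $\sum_i\deg(q_i)\frac{d}{d\eps}E(U_i)$ are exactly the ones already displayed in \eqref{eq:en-exp-nice} and do \emph{not} reappear in $R_0$; what is left is $R_0$ itself, and one reads off from \eqref{est:en-exp-nice-error}, using $\etazero\leqs\abs{a}\leqs1$ times the natural scale, $\eta_\tau\leqs\TT$ (valid for translation-induced variations, as noted after Lemma \ref{lemma:second-variation}), $\mu^{\mhalf}\leq1$, $\de\leq\si$ and $f_\mu^{-1}\leq\si_1$, that $R_0\leqs\nutot\Ctilt+\frac{\de}{\log\mu}\cdot(\text{small})+\TT[\mu^{-1}\log\mu+(\log\mu)^{-1}f_\mu^{-1}]\leqs\Ctilt\nutot+\tfrac12\tfrac{\de}{\log\mu}+\TT(\log\mu)^{-1}f_\mu^{-1}$, where the $\frac12\frac{\de}{\log\mu}$ is absorbed into the main term.

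Assembling: $dE(\zz)(\peps\zz)\geq \pi c_{r_0,r_1}\de - \abs{R_0} - \sum_i\abs{\int_{\Om_i}j_i\peps\Delta u_i} \geq \frac{\de}{\log\mu} - C(\Ctilt\nutot+\TT(\log\mu)^{-1}f_\mu^{-1})$, which is the claim. I expect the main obstacle to be the careful bookkeeping in the second step: one must be sure that the potentially $\TT$-sized contribution $\sum_i\deg(q_i)\frac{d}{d\eps}E(U_i)$ coming from the translation is genuinely separate from the error $R_0$ (i.e. that Lemma \ref{lemma:general-energy-expansion} has already isolated it), rather than hiding an uncancelled $\TT$ term that would violate the sharper $\TT(\log\mu)^{-1}f_\mu^{-1}$ weight; and one must verify that $\eta_\tau$ for the translation variation is controlled by $\TT$ with the right $(\log\mu)^{-1}f_\mu^{-1}$ gain, or else that the $\TT\cdot\eta_0(\log\mu)^{-1}f_\mu^{-1}$-type terms in $R_0$ are what actually produce the stated weight. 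The other steps — estimating $c_{r_0,r_1}$, bounding the $\int_{\Om_i}j_i\peps\Delta u_i$ terms, and invoking Lemma \ref{rmk:de} — are routine given the cited results.
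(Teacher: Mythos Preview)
Your overall strategy matches the paper's: apply Lemma \ref{lemma:general-energy-expansion}, identify the $\pi c_{r_0,r_1}\frac{d}{d\eps}\de^2$ term as dominant via Lemma \ref{rmk:de}, and bound everything else. The treatment of $\int_{\Om_i}j_i\,\peps\Delta u_i$ and of $R_0$ is essentially correct.

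However, there is a genuine gap in how you handle $\sum_i\deg(q_i)\frac{d}{d\eps}E(U_i)$. You bound this by $\TT$, notice this is too large for the claimed error, and then speculate about a cancellation with pieces of $R_0$ or of the $j_i$-integrals. No such cancellation exists, and your discussion of it does not lead anywhere. The correct observation---and the one the paper actually uses---is much simpler: the variation $U_{i,\eps}=U_i(\cdot+\eps a)$ is a precomposition with a translation of $\Chat$, which is a M\"obius transformation. By conformal invariance of the Dirichlet energy one has $E(U_{i,\eps})=E(U_i)$ for every $\eps$, and hence $\frac{d}{d\eps}E(U_{i,\eps})=0$ \emph{identically}, regardless of whether $U_i$ is harmonic. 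This is precisely why the final error carries only $\TT(\log\mu)^{-1}f_\mu^{-1}$ (coming from the term $\TT\,\eta_0(\log\mu)^{-1}f_\mu^{-1}$ in $R_0$ with $\eta_0\leqs 1$) rather than a bare $\TT$.

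Once this is seen, your residual concerns dissolve: there is no hidden $\TT$-sized term to track, and $\eta_\tau\leqs\TT$ enters $R_0$ only via $\nutot\Ctilt\eta_\tau\leqs\nutot\Ctilt$, so no additional $(\log\mu)^{-1}f_\mu^{-1}$ gain is needed there.
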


Finally, in non-integrable settings where we have to deal with maps $U_i$ which are not harmonic, we also want to consider variations which reduce $E(U_{i})$ and hence show

\begin{lemma}\label{lemma:energy-expansion-tension}
Let $U_{0,1}^*:S^2\to N$ be harmonic maps
 into an analytic Riemannian manifold of any dimension  with $U_{0}^*(0)=U_1^*(0)$. Then for any $\zz\in \ZZ_0$ there exists a variation $\zz_\eps$ in $\ZZ_0$ induced by a variation of $U_i$ in $\HH_1^\si(U_i^*)$ which satisfies \eqref{ass:variations} so that
\beqa
dE(\zz)(\peps\zz)\geq \TT_i-C \big[\Ctilt\nutot +
 \tfrac{\de}{\log\mu}+f_\mu^{-1}(\log\mu)^{-1}
\TT\big].
\eeqa
\end{lemma}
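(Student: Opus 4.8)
The plan is to feed the energy expansion of Lemma~\ref{lemma:general-energy-expansion} with the distinguished energy-decreasing variation of $U_i$ that is built into the definition of $\HH_1^\si(U_i^*)$, and then to check that for this variation every term of \eqref{eq:en-exp-nice} other than $\deg(q_i)\,\ddeps E(U_i)$ is already one of the admissible error terms on the right-hand side of the claim.

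First I would fix $i\in\{0,1\}$. If all Jacobi fields along $U_i^*$ are integrable then $\HH_1^\si(U_i^*)$ consists of harmonic maps, so $\TT_i=0$ and the statement is trivial with $\zz_\eps\equiv\zz$; I thus concentrate on the non-integrable case, in which $\HH_1^\si(U_i^*)$ inherits \eqref{est:key-HH} and \eqref{est:key-HH-Ck} from the construction in Section~\ref{sect:Z}. In particular, for the given $U_i\in\HH_1^\si(U_i^*)$ there is a variation $\eps\mapsto U_{i,\eps}$ inside $\HH_1^\si(U_i^*)$ with $\norm{\peps(U_{i,\eps}\circ\pi^{-1})}_{C^k}\leqs1$ and
\beq
\ddeps E(U_{i,\eps})\ \geq\ \norm{\tauS(U_i\circ\pi^{-1})}_{L^2(S^2,g_{S^2})}\ =\ \TT_i.
\eeq
I would then let $\zz_\eps\in\ZZ_0$ be the singularity model built from the pair $(U_{i,\eps},q_i)$ while keeping $U_{1-i}$, $q_{1-i}$ and $\mu$ fixed; since only $U_i$ varies this satisfies \eqref{ass:variations}, and as $u_{1-i}=U_{1-i}\circ q_\mu$ is unchanged we have $\peps\Delta u_{1-i}=0$.

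Inserting $\zz_\eps$ into \eqref{eq:en-exp-nice}, only the $i$-th summand of $\sum_{\ell}\int_{\Om_\ell}j_\ell\peps\Delta u_\ell$ survives, whereas $\sum_{\ell}\deg(q_\ell)\,\tfrac{d}{d\eps}E(U_\ell)=\deg(q_i)\,\ddeps E(U_i)\geq\TT_i$ because $\deg(q_i)=\deg(q_i^*)\geq n_i^*\geq1$. Hence $dE(\zz)(\peps\zz)\geq\TT_i-\babs{\int_{\Om_i}j_i\peps\Delta u_i}+\pi\,\cmul\,\tfrac{d}{d\eps}\de(U_0,U_1)^2-\abs{\err}$, and it remains to absorb the three error contributions. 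For the first I would invoke the bound $\babs{\int_{\Om_i}j_i\peps\Delta u_i}\leqs\norm{\beta}(\nu_{i^*}+\mu^\mhalf\nutot)\leqs\Ctilt\nutot$ recorded just after Lemma~\ref{lemma:general-energy-expansion}. For the second, $\cmul\sim(\log\mu)^{-1}$ and $\abs{\tfrac{d}{d\eps}\de(U_0,U_1)^2}\leqs\de\,\abs{\peps U_i(0)}\leqs\de$, so that term is $\geqs-\de/\log\mu$. For $\abs{\err}\leqs R_0$ I would use that here $\de$, $f_\mu^{-1}$, $\etazero$ and $\etatau$ are all $\leqs1$ — the last since $\norm{\peps(U_{i,\eps}\circ\pi^{-1})}_{C^k}\leqs1$ with $k\geq2$ controls $\peps\tauS(U_{i,\eps}\circ\pi^{-1})$ in $C^0$ — and that $\mu^{-1}\log\mu\leqs f_\mu^{-1}(\log\mu)^{-1}$, which holds because \eqref{ass:fmu} forces $f_\mu\leq\mu^{1/4}\ll\mu(\log\mu)^{-2}$ for $\mu>\bar\mu$ (Remark~\ref{rmk:mu-large-si-small}). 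Feeding these into \eqref{est:en-exp-nice-error} gives $R_0\leqs\Ctilt\nutot+\de/\log\mu+\TT f_\mu^{-1}(\log\mu)^{-1}$, and collecting the three estimates produces exactly the claimed lower bound.

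The only genuinely non-trivial ingredient is the construction underlying Section~\ref{sect:Z} (ultimately Lemma~2.1 of \cite{R}): producing, in the non-integrable case, a $C^k$-controlled variation inside $\HH_1^\si(U_i^*)$ that decreases the energy at rate $\TT_i$. Once that is granted, the rest is the bookkeeping above, the only real care being that the variation-dependent quantities $\etazero$ and $\etatau$ entering $R_0$ stay bounded, which the $C^k$ control of the chosen variation guarantees.
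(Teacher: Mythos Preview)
Your proof is correct and follows essentially the same route as the paper: invoke the general expansion \eqref{eq:en-exp-nice}, use the distinguished variation from \eqref{est:key-HH} so that $\deg(q_i)\,\ddeps E(U_i)\geq\TT_i$, bound $\babs{\int_{\Om_i}j_i\peps\Delta u_i}\leqs\Ctilt\nutot$ and $\pi\cmul\abs{\ddeps\de^2}\leqs\de/\log\mu$, and absorb $R_0$ into the stated error. The paper handles it slightly more tersely (bounding both $\int_{\Om_\ell}j_\ell\peps\Delta u_\ell$ terms uniformly rather than noting one vanishes, and not separating the integrable case), but the argument is the same.
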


\section{Proof of the main theorems}\label{sect:main-proofs}
Here we  give the proof of our main results based on  the lemmas stated in the previous section.
So let $(u_j)$ be a sequence of almost harmonic maps from $S^2$ to $N$ that converges to a bubble tree with bubble $\om_1(z)=U_1^*( q_1^*(\frac{1}{z}))$ and base $\om_0=U_0^*\circ q_0^*$ and let $\ZZ=\{\zz:S^2\to N\}$ be the corresponding set of singularity models constructed in Section \ref{sect:Z}.

For sufficiently large $j$ our construction 
and
 \eqref{eq:bt-conv-2} ensure that the function $\zz\mapsto \norm{u_j-\zz}_\zz$ achieves its minimum on $\ZZ$ and that 
the corresponding $\zz_j\in \ZZ$ with $\norm{u_j-\zz_j}_{\zz_j}=\dist(u_j,\ZZ):=\inf_{z\in \ZZ} \norm{u_j-\zz}_\zz$ are obtained from maps $U_{0,1}^{(j)}\to U_{0,1}^*$, rational functions $q_{0,1}^{(j)}\to q_{0,1}^*$ and 
scales $\mu(\zz_j)$ that are comparable to the numbers $\mu_j$ from \eqref{eq:bt-conv-2}. 
Furthermore, \eqref{eq:bt-conv-2} can be used to see that not only $\norm{u_j-\zz_j}_{\zz_j}\to 0$ but also $\norm{u_j-\zz_j}_{L^\infty(S^2)}\to 0$.

It hence suffices to prove 
that there exists a constant $\eps>0$ so that the claims of our main results hold true for all maps $u$ for which 
 \beq
\label{est:norm-less-eps}
\norm{u-\zz}_{\zz}=\dist(u,\ZZ)<\eps \text{ and } \norm{u-\zz}_{L^\infty(S^2)}<\eps \text{ for some }  \zz\in \ZZ,
\eeq
where after a rotation of the domain we can assume without loss of generality that $u$ is so that the above holds for a $\zz\in \ZZ_0$. 

Here and in the following we continue to use the convention that all claims are to be understood to hold true provided the numbers $\si, \si_1>0$ and $\bar \mu$ in the construction of $\ZZ$ are chosen sufficiently small respectively large. 

\begin{rmk}
\label{rmk:def-dual-norm}
Given any $u$ for which 
\eqref{est:norm-less-eps} holds we use the convention that 
\beq
\label{def:dE(u)-norm}
\norm{dE(u)}_{*}:=\sup\{dE(u)(w): \norm{w}_\zz=1\}
\eeq
is to be computed with respect to the weighted norm $\norm{\cdot}_\zz$ defined in \eqref{def:inner-product} for a $\zz\in \ZZ$ for which \eqref{est:norm-less-eps} holds.
\\
We do not claim that $\zz$ is uniquely determined by this relation but observe that $\norm{\cdot}_{\tilde \zz}\sim \norm{\cdot}_\zz$ 
if \eqref{est:norm-less-eps} is satisfied for both $\zz$ and $\tilde \zz$ and hence that it does not matter which such element of $\ZZ$ we use to define $\norm{dE(u)}_*$ in our main results.
\end{rmk}

We will combine the results from the previous section  with the following lemma that describes the basic estimates that we can obtain from our method of proof, compare also \cite[Section 2]{MRS}.

\begin{lemma} \label{lemma:first step}
Let $U_{0,1}^*:S^2\to N$ be any given harmonic maps into a smooth Riemannian manifold, let $q_{0,1}^*\in \RR$ and let $\ZZ$ be the corresponding set of singularity models.  Then there exist constants $\eps>0$, $\bar \La<\infty$ and $C$ so that the following holds true for any 
$u\in H^1(S^2, N)$ with $\norm{\tauS(u)}_{L^2(S^2,g_{S^2})}\leq 1$ 
for which there exists $\zz\in\ZZ$ so that
\eqref{est:norm-less-eps} is satisfied. 

We can bound $w:= u-\zz$ by 
\beq
\label{est:w}
\norm{w}_{\zz}\leq C \norm{dE(u)-dE(\zz)}_* 
\eeq
and furthermore get that 
\beq \label{est:first-step}
dE(\zz)(y_\zz)\leq C \abs{dE(u)(y_\zz)}+C\norm{dE(u)}_*^2
\eeq
for any 
$u, \zz$ for which there is a unit element 
$y_\zz\in T_\zz\ZZ$ with
\beq
\label{ass:quotient}
Q(\zz,y_\zz):= \frac{\norm{dE(\zz)}_*^2+\norm{d^2E(\zz)(y_\zz,\cdot )}_* \norm{dE(\zz)}_*}{ dE(\zz)(y_\zz)}
\leq  \bar \La^{-1} . \eeq


\end{lemma}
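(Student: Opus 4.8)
The plan is to follow the standard \L ojasiewicz-via-finite-dimensional-reduction scheme of \cite{MRS}, adapted to the weighted norms used here. First I would set $w := u - \zz$ and use the uniform invertibility of $d^2E(\zz)$ orthogonal to $T_\zz\ZZ$ (Lemma \ref{lemma:positive-def-2ndvar}) together with the near-orthogonality $\langle w, T_\zz\ZZ\rangle_\zz \approx 0$ coming from the fact that $\zz$ is (almost) the nearest point of $\ZZ$ to $u$. Writing $dE(u) - dE(\zz) = \int_0^1 d^2E(\zz + tw)(w,\cdot)\,dt$ and controlling the difference $d^2E(\zz+tw) - d^2E(\zz)$ by $\norm{w}_{L^\infty}\norm{w}_\zz$ (which is admissible once $\eps$ is small, using \eqref{est:norm-less-eps} and the uniform bound \eqref{est:sec-var-general}), one gets $\norm{dE(u)-dE(\zz)}_* \gtrsim \norm{w}_\zz - o(1)\norm{w}_\zz$, hence \eqref{est:w}. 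The point is that the coercivity estimate \eqref{est:2ndvar-pos-def} is uniform in $\zz \in \ZZ_0$, so $\eps$ and $C$ can be chosen independently of $\zz$.

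Next, to prove \eqref{est:first-step}, I would expand $dE(u)(y_\zz)$ around $\zz$: since $y_\zz \in T_\zz\ZZ$ and $\norm{y_\zz}_\zz = 1$,
\begin{equation*}
dE(u)(y_\zz) = dE(\zz)(y_\zz) + d^2E(\zz)(w, y_\zz) + O(\norm{w}_\zz^2).
\end{equation*}
The cross term is bounded by $\norm{d^2E(\zz)(y_\zz,\cdot)}_* \norm{w}_\zz$, and by \eqref{est:w} and the estimate $\norm{dE(u) - dE(\zz)}_* \leq \norm{dE(u)}_* + \norm{dE(\zz)}_*$ together with $\norm{dE(\zz)}_* \lesssim \norm{dE(u)}_*$ (which follows again from \eqref{est:w}, as $dE(\zz) = dE(u) - (dE(u)-dE(\zz))$ and the latter is $\lesssim \norm w_\zz$), we get $\norm{w}_\zz \lesssim \norm{dE(u)}_*$. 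Substituting,
\begin{equation*}
dE(\zz)(y_\zz) \leq \abs{dE(u)(y_\zz)} + C\norm{d^2E(\zz)(y_\zz,\cdot)}_* \norm{dE(u)}_* + C\norm{dE(u)}_*^2.
\end{equation*}
Now I would invoke the hypothesis \eqref{ass:quotient}: multiplying it through by $dE(\zz)(y_\zz)$ gives $\norm{dE(\zz)}_*^2 + \norm{d^2E(\zz)(y_\zz,\cdot)}_*\norm{dE(\zz)}_* \leq \bar\La^{-1} dE(\zz)(y_\zz)$. Using $\norm{dE(u)}_* \leq \norm{dE(\zz)}_* + C\norm w_\zz \lesssim \norm{dE(\zz)}_*$ (again from \eqref{est:w}), the middle term $C\norm{d^2E(\zz)(y_\zz,\cdot)}_*\norm{dE(u)}_*$ is absorbed into $\tfrac12 dE(\zz)(y_\zz)$ once $\bar\La$ is large enough, and the last term $C\norm{dE(u)}_*^2 \lesssim C\norm{dE(\zz)}_*^2$ is likewise absorbed; rearranging yields \eqref{est:first-step}.

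The main obstacle I expect is bookkeeping around the fact that $\zz$ is only an \emph{approximate} nearest point and that the weighted inner product $\langle\cdot,\cdot\rangle_\zz$ varies with $\zz$; one must be careful that the implied constants in ``$\lesssim$'' depend only on $U_{0,1}^*, q_{0,1}^*$ and not on $\mu$ or the particular $\zz$, which is exactly what the uniformity in Lemma \ref{lemma:positive-def-2ndvar} and \eqref{est:sec-var-general} is there to guarantee. A secondary technical point is justifying the Taylor expansions of $dE$ along the segment $\zz+tw$ inside $N$ — this uses that $\norm w_{L^\infty}$ is small so the segment stays in a tubular neighbourhood and the nearest-point projection is controlled, together with standard $W^{2,2}$/$L^\infty$ interpolation to handle the curvature terms $A(u)(\na u,\na u)$ appearing in \eqref{eq:second-var}. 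None of this requires analysing general almost-critical points; everything is driven by the structure of $\ZZ$, which is the whole point of the method.
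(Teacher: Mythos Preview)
Your plan for \eqref{est:w} is essentially the paper's: use the near-orthogonality of $w$ to $T_\zz\ZZ$ coming from the minimising property of $\zz$, then the uniform definiteness of $d^2E(\zz)$ on $T_\zz^\perp\ZZ$ from Lemma~\ref{lemma:positive-def-2ndvar}. The paper makes the invertibility step explicit by testing against $\tilde w_\zz:=P^{\VV_\zz^+}w_\zz-P^{\VV_\zz^-}w_\zz$ (since $d^2E(\zz)$ is definite, not positive, on $T_\zz^\perp\ZZ$), but your ``uniform invertibility'' captures this.

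For \eqref{est:first-step}, however, your bootstrap is circular. You argue $\norm{w}_\zz\lesssim\norm{dE(u)}_*$ via $\norm{dE(\zz)}_*\lesssim\norm{dE(u)}_*$, and justify the latter by $\norm{dE(\zz)}_*\le\norm{dE(u)}_*+\norm{dE(u)-dE(\zz)}_*\le\norm{dE(u)}_*+C'\norm{w}_\zz$; but plugging this back into $\norm{w}_\zz\le C(\norm{dE(u)}_*+\norm{dE(\zz)}_*)$ gives $(1-CC')\norm{w}_\zz\le 2C\norm{dE(u)}_*$, which does not close since the invertibility constant $C\sim c_0^{-1}$ and the Lipschitz constant $C'$ for $dE$ need not satisfy $CC'<1$. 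You then compound this by also claiming $\norm{dE(u)}_*\lesssim\norm{dE(\zz)}_*$ via the same route.

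The paper sidesteps this entirely: it never compares $\norm{dE(u)}_*$ with $\norm{dE(\zz)}_*$. Instead it substitutes the bound $\norm{w}_\zz\le C(\norm{dE(u)}_*+\norm{dE(\zz)}_*)$ directly and then uses the hypothesis \eqref{ass:quotient}, which gives $\norm{dE(\zz)}_*^2+\norm{d^2E(\zz)(y_\zz,\cdot)}_*\norm{dE(\zz)}_*\le\bar\La^{-1}dE(\zz)(y_\zz)$, to absorb every term containing $\norm{dE(\zz)}_*$ into $C\bar\La^{-1}dE(\zz)(y_\zz)$ on the right-hand side. Choosing $\bar\La$ large then lets you move this to the left. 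This is the step you should replace your bootstrap with.
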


For $C^2$-energies that are defined on a fixed Hilbert-space, the analogue of the above lemma can be obtained from a simple argument that is based on the fundamental theorem of calculus, compare \cite{MRS}. In more involved settings such as the present problem where we consider an energy on a suitable Hilbert-manifold of maps and where we work with distances that are calculated using a weighted norm that depends on the corresponding element of $\ZZ$ one needs to proceed with more care.
For almost harmonic maps from higher genus surfaces which are close to a simple bubble tree a similar result was obtained by the author in Section 4 of \cite{R}, and most of the arguments from \cite{R} are applicable also in the present situation. 
 For the convenience of the reader we hence include a sketch of the proof of this lemma, but omit the technical details except to explain why the estimate 
 \eqref{est:w} is simpler than the one obtained in \cite[Lemma 4.4]{R}.  
 
\begin{proof}[Sketch of the proof of Lemma \ref{lemma:first step}]
Given $u$, $\zz$ and $w=u-\zz$ as in the lemma we set  
 $w_\zz:= P_\zz w$, $P_p$ the projection onto $T_pN$. We first note that \eqref{est:norm-less-eps} ensures that  $\norm{w-w_\zz}_{\zz}\leq C\norm{w}_{L^\infty}\norm{w}_\zz$ where and in the following all $L^p$ norms are computed over $(S^2,g_{S^2})$ unless indicated otherwise.

We can easily check
that the variation of the weight $\rho_\zz$ in \eqref{def:inner-product} is controlled by 
$\norm{\peps \rho_\zz}_{L^2}\leqs 1$ whenever $\norm{\peps \zz}_{\zz}\leq 1$. As $\zz$ minimises the distance to $u$ and thus
$$0=\ddeps \norm{u-\zz}_{\zz}^2=-2\langle u-\zz,\peps \zz\rangle_\zz+2\int \rho_\zz\peps \rho_\zz \abs{w}^2$$ we hence get that
$\norm{P^{T_\zz\ZZ} w_\zz}_\zz\leq C\norm{w}_{L^\infty}\norm{w}_\zz$, i.e. that
 $w_\zz$ is nearly orthogonal to $T_\zz\ZZ$.

We then set $\tilde w_\zz:=
P^{\VV_\zz^+}w_\zz-P^{\VV_\zz^-}w_\zz ,$ where 
 $P^{\VV_\zz^\pm}:\Gamma(\zz^*TN)\to \VV_\zz^{\pm}$ denote the $\langle\cdot,\cdot\rangle_\zz$ orthogonal projections onto the subspaces $\VV_\zz^\pm$ of $T_\zz^\perp \ZZ$ from Lemma \ref{lemma:second-variation}. The uniform definiteness of $d^2E$ on these spaces ensures that
\beq
\label{est:first-step-proof-lambda}
(c_0-C\norm{w}_{L^\infty})\norm{w}_{\zz}^2\leq d^2E(\zz)(w_\zz,\tilde w_\zz) \leq (dE( u)-dE(\zz))(\tilde w_\zz) +\err_1,\eeq
where we can analyse the resulting error term as explained in detail 
in the proof of Lemma 4.4 of \cite{R}, resulting in an estimate of
\beq\label{est:err-first-step}
\abs{\err_1}\leqs \norm{w}_{L^\infty}\norm{w}_{\zz}^2+ \norm{w}_{L^\infty} \norm{\tauS(u)}_{L^2} \norm{\tilde w_\zz}_{L^4}^2.\eeq
 The only difference to the proof in \cite{R} is that in the present situation
 $$\norm{\tilde w_\zz}_{L^4}\leqs \norm{\tilde w_\zz}_{H^1}\leqs \norm{\tilde w_\zz}_\zz\leq \norm{w_\zz}_\zz\leqs \norm{w}_\zz$$ 
and that we can
hence bound the second term on the right hand side of \eqref{est:err-first-step} by a multiple of the first term. This was not possible in \cite{R}, as the correct weighted norm in that context does not control the $H^1$ norm in a uniform way. 

For $\eps>0$ sufficiently small we hence conclude that 
\beq 
\label{est:norm-w-squared}
\norm{w}_\zz^2\leq C (dE( u)-dE(\zz))(\tilde w_\zz)
\eeq
which yields the first claim of the lemma.

If there exists a unit element $y_\zz$ for which  \eqref{ass:quotient} is satisfied then we can use that
$$dE(\zz)(y_\zz)=dE(u)(y_\zz)+d^2E(\zz)(y_\zz,w_\zz)+\err_2$$
for an error of order $\abs{\err_2}\leqs \norm{w}_\zz^2$. We hence conclude that 
\beqa
\label{est:second-step}
dE(\zz)(y_\zz)&\leq \abs{dE(u)(y_\zz)}+ \abs{d^2E(\zz)(y_\zz,w_\zz)}+ C\norm{dE(u)}_*^2+C\norm{dE(\zz)}_*^2\\
&\leq C  \abs{dE(u)(y_\zz)}+C \norm{dE(u)}_*^2+C\bar \La^{-1} dE(\zz)(y_\zz)
\eeqa
for constants $C$ that only depend on $U_i^*$ and $q_i^*$. 
For $\bar \La=\bar\La(U_i^*,q_i^*) $ sufficiently large this yields the second claim of the lemma.   
\end{proof}

To prove our main results we will now apply the above lemma in directions $y_\zz$ that correspond to variations $\zz_\eps$ as considered in one of the three Lemmas \ref{lemma:energy-expansion-rational}, \ref{lemma:energy-expansion-rotation}, \ref{lemma:energy-expansion-tension}.

\subsection{Proofs of Theorem \ref{thm:1} and Corollary \ref{cor:thm1}}\label{sec:setting1}
$ $\\
We first consider settings in which the base and bubble are given by parametrisations of the same non-degenerate minimal sphere with the same orientation.

So let $U^*:\R^2\to N$, $q_{0,1}^*\in\RR$ and $\om_{0,1}$ be as in Theorem \ref{thm:1}, let $\ZZ$ be the corresponding set of singularity models and let 
$u$ be a map for which \eqref{est:norm-less-eps} holds true for $\eps>0$ chosen as in Lemma \ref{lemma:first step} and $\zz\in \ZZ_0$. 

We then consider the variation $\zz_\eps$ of $\zz$ that we obtain by fixing $U_0$, $q_{0,1}$ and $\mu$ and varying $U_1$ according to $U_{1,\eps}=U_1(\cdot+\eps a)$ 
for $\abs{a}\sim 1$ chosen as in Lemma \ref{rmk:de}.  
As   $\norm{\peps \zz_\eps}_\zz\sim 1$ we obtain from
Lemma \ref{lemma:energy-expansion-rotation} that 
 \beq
\label{est:case1-0}
dE(\zz)(y_\zz) \geq 2c\tfrac{\de}{\log\mu}- C   \Ctilt \nutot
\eeq
for the corresponding unit element 
$y_\zz=\frac{\peps \zz_\eps}{\norm{\peps \zz_\eps}_\zz}$ and 
for constants $C,c>0$ that only depend on $U^*$ and $q_{0,1}^*$. 
Here we use that $\TT=0$ as  the maps $U_i$ are given by  $U_i(\cdot)=U^*(\cdot+c_i)$ for some $\abs{c_i}<\si$ since
$U^*$ is assumed to be non-degenerate.  
 As $dU^*(0)\neq 0$ this furthermore implies that $\de\sim \norm{\beta}$ 
as both of these quantities scale like $\abs{c_0-c_1}$.
As $\nutot\leqs \mu^{-1}\ll (\log\mu)^{-1}$ we hence deduce from \eqref{est:case1-0} that 
 \beqs
dE(\zz)(y_\zz) \geq c\tfrac{\de}{\log\mu}
\eeqs
while Lemmas \ref{lemma:first-variation} and  \ref{lemma:second-variation} ensure that 
\beq 
\label{est:proof-1-1}
\norm{dE(\zz)}_* \leq C\tfrac{\de(\log\fmu)^\half}{\log\mu} \text{ and } 
\norm{d^2E(\zz)(y_\zz,\cdot)}_*\leq C \tfrac{(\log\fmu)^\half}{\log\mu},
\eeq
again for constants that only depend on the limiting configuration. 

We hence deduce that 
$$Q(\zz,y_\zz)\leq C \tfrac{\log\fmu}{\log\mu}\leq C \si_1\leq \bar \La^{-1},$$
$\bar \La$ as in Lemma \ref{lemma:first step}
since $\fmu$ satisfies \eqref{ass:fmu} for a constant $\si_1=\si(U_i^*,q_i^*)>0$ that can still be reduced if necessary. 
This lemma hence implies that
\beq\label{est:proof-1-3-1}
\norm{w}_\zz\leqs \norm{dE(u)}_{*}+\norm{dE(\zz)}_{*}
\eeq
and
\beq \label{est:proof-1-3-2}
\tfrac{\de}{\log\mu}\leq dE(u)(y_\zz)+C\norm{dE(u)}_*^2.\eeq
If $u$ is harmonic then \eqref{est:proof-1-3-2} 
ensures that $\de(\zz)=0$ and thus that $\zz(z)=U^*( q_0(z)+q_1(\frac{1}{\mu z}) +c_0)$. Thus $dE(\zz)=0$ so   \eqref{est:proof-1-3-1} ensures that $u=\zz$ and we obtain the claim about \textit{harmonic} maps made in the theorem. 

To prove the claims about almost harmonic maps made in 
the theorem and the subsequent Corollary \ref{cor:thm1}
 we now show that  
\beq \label{est:proof-1-2}
\abs{E(u)-E^*}\leqs \tfrac{\de^2}{\log\mu} +\norm{dE(u)}_*^2,
\eeq
where we write for short $E^*=E(\om_0)+E(\om_1)=[\deg(q_0^*)+\deg(q_1^*)]E(U^*)$. 

To see this
we first use \eqref{est:proof-1-1} and \eqref{est:proof-1-3-1} to bound 
\beqas
\abs{E(u)-E(\zz)}&\leqs \abs{dE(\zz)(w)}+\norm{w}_\zz^2
\leqs \norm{dE(\zz)}_*^2+\norm{w}_\zz^2
\leqs \norm{dE(\zz)}_*^2+\norm{dE(u)}_*^2\\
&\leqs \tfrac{\de^2 \log\fmu}{(\log\mu)^2}+\norm{dE(u)}_{*}^2\leqs \tfrac{\de^2}{\log\mu}+\norm{dE(u)}_{*}^2.
\eeqas
To obtain \eqref{est:proof-1-2} it hence suffices to check that  
\beq
\label{est:proof-1-5}
\abs{E(\zz)-E^*}\leqs  \tfrac{\de^2}{\log \mu} \text{ for all } \zz\in \ZZ_0.
\eeq
If $\de=0$, i.e.~if $c_0=c_1$, this is trivially true. 
Conversely if $c_0\neq c_1$ then
we can interpolate between 
$\zz$ and $\hat\zz =U^*\circ (q_\mu+c_0)$, which satisfies $E(\hat \zz)=E^*$, 
by considering the family $(\zz_t)_{t\in  [0,\abs{c_1-c_0}]}$ that is generated by $U_{1,t}=U^*(\cdot +c_1+t \tfrac{c_0-c_1}{\abs{c_1-c_0}})$. 
 As $\norm{\partial_t \zz_t}\leqs 1$ and $\norm{\beta(\zz_t)}\leqs \de(\zz_t)\leqs \de:=\de(\zz)$ 
we can use \eqref{est:var-E-always-Efixed} to bound 
 $\abs{\partial_t E(\zz_t)}\leqs \tfrac{\de}{\log \mu}$ which, when integrated over $0\leq t\leq \abs{c_0-c_1}\sim \de$, yields \eqref{est:proof-1-5} and hence \eqref{est:proof-1-2}.

From
 \eqref{est:proof-1-3-2} and  \eqref{est:proof-1-2} we now immediately deduce that
$$\abs{E(u)-E^*}\leqs \min(\norm{dE(u)}_*,\log\mu \norm{dE(u)}_*^2)$$
which completes the proof of Theorem \ref{thm:1}. 

It remains to prove the $L^2$-\Lojns-estimate \eqref{claim:thm1-Loj}  claimed in Corollary \ref{cor:thm1}. To this end we first combine
\eqref{est:proof-1-3-2} with the fact that $\norm{\cdot}_{L^2(S^2,g_{S^2})}\leqs \norm{\cdot}_\zz$, and hence $\norm{dE(u)}_*\leqs \norm{\tauS(u)}_{L^2(S^2,g_{S^2})}$, to bound   
\beqs
\label{est:proof-1-l2} 
\tfrac{\de}{\log\mu}\leqs \norm{\tauS(u)}_{L^2(S^2,g_{S^2})}\norm{y_\zz}_{L^2(S^2,g_{S^2})} + \norm{\tauS(u)}_{L^2(S^2,g_{S^2})}^2.
\eeqs
As $y_\zz$ is obtained from a variation of just the bubble $U_1$, it is 
mainly concentrated on a small ball around the point where the bubble is attached and hence its $L^2$ norm is small compared to $\norm{y_\zz}_\zz=1$.
 To be more precise, for variations satisfying \eqref{ass:variations} for which $q_0$ and $U_0$  are fixed
 we shall see in \eqref{est:norm-yz-proof} that
 \beq \label{est:norm-yz}
 \norm{\peps \zz}_{L^2(S^2,g_{S^2})}\leqs \mu^{-1}(\log\mu)^\half+\tfrac{\de+\eta_0}{f_\mu\log\mu }.
 \eeq
In the present situation where $\eta_0\sim 1$ we hence get that 
 $\norm{y_\zz}_{L^2(S^2,g_{S^2})}\leqs (\log\mu)^{-1} f_\mu^{-1}$ 
so 
\eqref{est:proof-1-3-2} allows us to conclude that
\beq
\label{est:proof-1-cor} 
\tfrac{\de^2}{\log\mu}\leq \norm{\tauS(u)}_{L^2(S^2,g_{S^2})}^2.
\eeq
Combined with
\eqref{est:proof-1-2} this immediately yields the claim  $\abs{E(u)-E^*}\leqs\norm{\tauS(u)}_{L^2(S^2,g_{S^2})}^2 $ made in Corollary \ref{cor:thm1}.

\begin{rmk} \label{rmk:spectral-gap}
As noted in the introduction, the reason we cannot obtain these results from the existing theory is that the spectral gap at zero for the Jacobi-operator at $U^*(q_\mu)$ tends to zero as $\mu\to \infty$ and that the size of the neighbourhoods on which the existing theory yields \Lojns-estimates scales like this spectral gap. 
To be more precise, as the energy defect scales like $\frac{\de^2}{\log\mu}$ we can see that
the Jacobi-operator must have an eigenvalue that scales like $\frac{1}{\log\mu}$.
 If we did not include these directions in our set of singularity models $\ZZ$ this would mean that the argument of Lemma \ref{lemma:first step} would break down even for harmonic maps $u$ unless we knew a priori that  $\norm{w}_\zz$ is bounded by a small enough multiple of $\frac{1}{\log\mu}$, compare \eqref{est:first-step-proof-lambda}.
\end{rmk}

 \subsection{Proofs of Theorems  \ref{thm:2} and \ref{thm:3} and of Corollary \ref{cor:thm2}} $ $\\
 In this section we always consider settings for which the assumptions of either Theorem \ref{thm:2} or of Theorem \ref{thm:3} are satisfied and let $\ZZ$ be the corresponding set of singularity models. We note that in these settings $\abs{d\beta(0)}$ and $\norm{\beta}$ are of order $1$ and that we have
 \begin{lemma}
 \label{lemma:energy-defect}
 The energy defect of elements $\zz\in\ZZ$ is bounded by 
 \beq
\label{claim:energy-defect}
 \abs{E(\zz)-E^*}\leqs \tfrac{\de^2}{\log \mu}+\min(\nu_0,\nu_1)+\mu^\mhalf \nutot + \TT_1^{\gamma_1}+ \TT_0^{\gamma_0}+ \TT \mu^{-1}\log\mu,
 \eeq 
where $\gamma_{0,1}\in (1,2] $ are  the exponents for which the classical \Lojns-Simon estimate \eqref{est:Simon} is valid near $U_{0,1}^*$ while $E^*:=E(\om_0)+E(\om_1)=\deg(q_0)E(U_0^*)+\deg(q_1)E(U_1^*)$.
 \end{lemma}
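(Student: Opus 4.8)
The plan is to estimate $E(\zz)-E^*$, with $E^*=\deg(q_0)E(U_0^*)+\deg(q_1)E(U_1^*)$ (recall $\deg(q_i)=\deg(q_i^*)$ since $q_i\in\RR^\si(q_i^*)$), in three stages; throughout one uses that $\norm{\beta}\sim 1$ and $\abs{d\beta(0)}\sim 1$ in the settings of Theorems \ref{thm:2} and \ref{thm:3}. First I would split
\beqs
E(\zz)-E^*=\Big(E(\zz)-\deg(q_0)E(U_0)-\deg(q_1)E(U_1)\Big)+\sum_{i=0,1}\deg(q_i)\big(E(U_i)-E(U_i^*)\big)
\eeqs
and dispose of the second sum immediately: since $U_i\in\HH_1^\si(U_i^*)$ is $C^k$-close to the harmonic map $U_i^*$ (hence lies in the neighbourhood $\UU(U_i^*)$ of \eqref{est:Simon} once $\si$ is small) and satisfies $\norm{\tauS(U_i\circ\pi^{-1})}_{L^2}=\TT_i$, the classical \Lojns--Simon estimate \eqref{est:Simon} gives $\abs{E(U_i)-E(U_i^*)}\leqs\TT_i^{\gamma_i}$, and $\deg(q_i)$ is a fixed constant. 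It remains to bound the pure gluing defect $G(\zz):=E(\zz)-\deg(q_0)E(U_0)-\deg(q_1)E(U_1)$ by $\tfrac{\de^2}{\log\mu}+\min(\nu_0,\nu_1)+\mu^{\mhalf}\nutot+\TT\mu^{-1}\log\mu$.

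In the second stage I would push $\de$ down to zero. Fixing $U_0,q_0,q_1,\mu$ and moving $U_1$ along $U_1(\cdot+\eps a)$, with $a$ as in Lemma \ref{rmk:de} so that $\de(\zz_\eps)$ decreases at unit speed, gives a path from $\zz$ to a model $\zz'$ with $\de(\zz')=0$ whose parameter interval has length $\eps_0\sim\de$; since translations on the domain leave each $E(U_i)$ and $\TT_i$ unchanged, $G(\zz)$ and $G(\zz')$ differ by $\int_0^{\eps_0}dE(\zz_\eps)(\peps\zz_\eps)\,d\eps$. In the energy expansion \eqref{eq:en-exp-nice} along this path the $\sum_i\deg(q_i)\tfrac{d}{d\eps}E(U_i)$ term vanishes, the $\pi c_{r_0,r_1}\tfrac{d}{d\eps}\de^2$ term integrates exactly to $-\pi c_{r_0,r_1}\de^2$ of size $\leqs\de^2/\log\mu$, and the remaining terms $\sum_i\int_{\Om_i}j_i\peps\Delta u_i$ and $\err$ are controlled uniformly in $\eps$ by $\babs{\int_{\Om_i}j_i\peps\Delta u_i}\leqs\norm{\beta}(\nu_{i^*}+\mu^{\mhalf}\nutot)$ and by \eqref{est:en-exp-nice-error} with $\eta_0\leqs 1$, $\eta_\tau\leqs\TT$, $\de_\eps\leq\de$. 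Because the path has length $\sim\de$, every contribution that is already $\de$-linear in \eqref{est:en-exp-nice-error} acquires a second factor $\de$, while the $\de$-free contributions become $\de$ times a small quantity; a few applications of Young's inequality (e.g. $\de\nutot\leqs\tfrac{\de^2}{\log\mu}+\mu^{\mhalf}\nutot$, using $\nutot\log\mu\ll\mu^{\mhalf}$, and $\de\TT\leqs\de^2+\TT^2\leqs\de^2+\TT_0^{\gamma_0}+\TT_1^{\gamma_1}$) keep this stage within the claimed bound. Thus it suffices to estimate $G(\zz')$ for a model with $\de=0$.

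In the third stage, with $\de=0$, I would let the concentration scale run to infinity. Along $s\mapsto\zz'_s$, obtained from $\zz'$ by keeping $U_i$ and $q_i$ fixed and increasing the scale $s$ from $\mu$ to $+\infty$, the quantities $\de$, $E(U_i)$, $\TT_i$ stay constant; and as $s\to\infty$ the construction of Section \ref{sect:Z} decouples with no neck — after rescaling, $\zz'_s$ converges to $U_0\circ q_0$ on $\Om_0$ and to the covering $U_1\circ q_1(\tfrac1{\cdot})$ on the rescaled $\Om_1$, with vanishing interpolation energy on $\hat A$ — so that $E(\zz'_s)\to\deg(q_0)E(U_0)+\deg(q_1)E(U_1)$. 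Representing $\partial_s$ as a variation of $q_1$, Lemma \ref{lemma:general-energy-expansion} gives $\tfrac{d}{ds}E(\zz'_s)=s^{-1}\big(\sum_{i=0,1}\int_{\Om_i}j_i\,s\partial_s\Delta u_i+\err_s\big)$ (the $\de^2$- and $E(U_i)$-terms now vanishing), and hence $G(\zz')=-\int_\mu^\infty s^{-1}\big(\sum_{i=0,1}\int_{\Om_i}j_i\,s\partial_s\Delta u_i+\err_s\big)\,ds$. Here $\err_s$ carries neither $\de$- nor $\eta_0$-terms (both $U_i$ fixed, $\de=0$, so also $\eta_\tau=0$), so $\abs{\err_s}\leqs\nutot(s)\,s^{\mhalf}+\TT\,s^{-1}\log s$; this, together with the $s^{\mhalf}\nutot(s)$-parts of the $\int_{\Om_i}j_i\,s\partial_s\Delta u_i$ terms, is integrable against $\tfrac{ds}{s}$ and contributes $\leqs\mu^{\mhalf}\nutot+\TT\mu^{-1}\log\mu$.

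The remaining, and genuinely delicate, point — the one I expect to be the main obstacle — is the leading part $\sum_i\int_{\Om_i}j_i\,s\partial_s\Delta u_i$ of this last integral. Bounding the two summands separately by $\norm{\beta}(\nu_{i^*}+\mu^{\mhalf}\nutot)$ (with $\nu_{0^*}=\nu_1$, $\nu_{1^*}=\nu_0$) and adding yields only $\nutot=\max(\nu_0,\nu_1)$ per scale, hence $G(\zz')\leqs\nutot$, whereas \eqref{claim:energy-defect} demands the sharper $\min(\nu_0,\nu_1)$. Obtaining it forces one to exploit the exact cancellation between the contributions on $\Om_0=\DD_{\hat r}^c$ and $\Om_1=\DD_{\hat r}$ — together with the boundary term coming from the $s$-dependence of $\hat r$ — which is where the fact that both halves of $\zz$ are built from the \emph{same} conformal map $q_\mu$ and the \emph{same} geodesic interpolation $\gamma$ (compare the remark after Lemma \ref{lemma:general-energy-expansion}) enters: after using it, the $\nu_{i^*}$-contributions combine to a quantity of size $\leqs\norm{\beta}\min(\nu_0,\nu_1)$ at each scale $s$, which integrates to $\min(\nu_0,\nu_1)$. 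Combining the three stages then yields \eqref{claim:energy-defect}.
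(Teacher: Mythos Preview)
Your three-stage structure — \Lojns--Simon for $E(U_i)-E(U_i^*)$, translate $U_i$ to kill $\de$, then send the scale to infinity — is exactly the paper's proof (the paper does the last two in the opposite order, first treating the special case $\de=0$ and then reducing to it, but that is cosmetic). Stages 1 and 2 are fine.

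The ``main obstacle'' you anticipate in stage 3, however, is not there, and the cancellation mechanism you propose is neither needed nor carried out. You apply the \emph{generic} bound $\abs{\int_{\Om_i}j_i\peps\Delta u_i}\leqs\norm{\beta}(\nu_{i^*}+\mu^{\mhalf}\nutot)$ to both summands and then worry that this only gives $\nutot$. But a scale variation is, as you say yourself, a variation of $q_1$ alone; for such variations the $\Om_0$-integral is \emph{not} of size $\nu_{0^*}=\nu_1$. On $\Om_0$ one has $\peps q_\mu=\peps q_{1,\mu}$, which is concentrated near the origin and hence small on $\Om_0$, and the direct estimate in the proof of Lemma \ref{lemma:energy-expansion-rational} gives $\abs{\int_{\Om_0}j_0\peps\Delta u_0}\leqs\norm{\beta}\nutot\,\mu^{\mhalf}$, already lower order. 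This is precisely why the improved bound \eqref{est:var-E-always-rational} has leading term $\norm{\beta}\nu_0$ rather than $\norm{\beta}\nutot$. The paper first uses the base--bubble symmetry of Remark \ref{rmk:symm} to assume without loss of generality that $\nu_0\leq\nu_1$, and then simply quotes \eqref{est:var-E-always-rational}: with $\de=0$ it reads
\[
\mu(t)\,\abs{\partial_\mu E(\zz(t))}\ \leqs\ \nu_0(t)+\mu(t)^{\mhalf}\nu_1(t)+\TT\,\mu(t)^{-1}\log\mu(t),
\]
which integrates against $dt$ to $\nu_0+\mu^{\mhalf}\nutot+\TT\mu^{-1}\log\mu=\min(\nu_0,\nu_1)+\mu^{\mhalf}\nutot+\TT\mu^{-1}\log\mu$. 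No cancellation between $\Om_0$ and $\Om_1$ is invoked; the $\min$ comes for free from the asymmetry of a pure $q_1$-variation together with the WLOG reduction.
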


\begin{proof}[Proof of Lemma \ref{lemma:energy-defect}]
It suffices to carry out the proof for elements of $\ZZ_0$ and using the symmetry described in Remark \ref{rmk:symm} we can assume without loss of generality that $\nu_0\leq \nu_1$.

We will first prove the lemma in the special case where $\de=0$. To this end, we consider  the family $\zz(t)$, $t>0$, which we obtain by varying the bubble scale according to $\mu(t)=\mu(\zz)e^t$ while keeping the maps $U_i$ and $q_i$ fixed and use that \eqref{est:var-E-always-rational} ensures that
$$\abs{\tfrac{d}{dt}E(\zz(t))}=\mu(t)\abs{\partial_\mu E(\zz(t))}\leqs \nu_0(t)+\mu(t)^\mhalf \nu_1(t)+ \TT \mu(t)^{-1}\log\mu(t) \text{ for } t\in [0,\infty).$$
As $\nu_i(t)=\max_{j\leq n_i^*}\abs{a_j(q_i)}\mu(t)^{-j}\leqs e^{-t} \nu_i(0)$ and as $E(\zz(t))\to \deg(q_0)E(U_0)+\deg(q_1)E(U_1)$ for $t\to \infty$ we hence deduce that 
\beqas
\abs{E(\zz)-E^*}&\leq \sum_i\deg(q_i)\abs{E(U_i)-E(U_i^*)}+C\big[\nu_0+\mu^\mhalf \nu_1+\TT \mu^{-1}\log\mu \big]
\eeqas
which, combined with \eqref{est:Simon}, yields the claim in this special case where $\de=0$. 

So suppose that $\de=\de(U_1,U_0)\neq 0$. In this case we can choose
$U_{i,t}=U_i(\cdot+c_i(t))$,  $t\in [0,\de]$ with  $\abs{\dot c_i(t)}\leqs 1$ in a way that $\de(U_{0,t},U_{1,t})=\de-t$, compare Lemma \ref{rmk:de}. 
The resulting family $\zz_t\in \ZZ $ satisfies 
\beqs
\abs{\tfrac{d}{dt} E(z_t)}\leqs \nutot+\tfrac{\de}{\log\mu}+\tfrac{\TT}{\log\mu},
\eeqs
compare 
\eqref{est:var-E-always-Efixed}, so turns $\zz$ into an element $\hat\zz=\zz_{t=\de}$ with 
$\de(\hat \zz)=0$ and
 \beqs
\abs{E(\zz)-E(\hat \zz)}\leqs \tfrac{\de^2}{\log\mu}+\de \nutot+\tfrac{\de}{\log\mu} \TT\leqs \tfrac{\de^2}{\log\mu} +\nutot^2\log\mu+\TT^2.
\eeqs 
As $\nutot\log\mu\leqs \mu^{-1}\log\mu\leqs \mu^\mhalf$ and as we have already shown that  
 \eqref{claim:energy-defect} holds  for $\hat\zz$ we hence obtain the claimed bound on the energy defect also for general elements of $\ZZ_0$. 
\end{proof}

Let now $u$ be a map with $\norm{\tauS(u)}_{L^2(S^2,g_{S^2})}\leq 1$  which satisfies  \eqref{est:norm-less-eps} for  
$\eps>0$ as in Lemma \ref{lemma:first step}. 

We can combine the above Lemma \ref{lemma:energy-defect} with 
the estimate 
 $$\abs{E(u)-E(\zz)}\leqs  \norm{dE(\zz)}_*^2+\norm{w}_\zz^2\leqs \nutot^2 \log\mu +\tfrac{\de^2 (\log\fmu)^2}{(\log\mu)^2}+\TT^2+\norm{dE(u)}_*^2
 $$
that we obtain from \eqref{est:w} and Lemma \ref{lemma:first-variation} to see that
 \beq
\label{est:energy-defect-general}
 \abs{E(u)-E^*}\leqs \tfrac{\de^2}{\log \mu}+\min(\nu_0,\nu_1)+\mu^\mhalf \nutot+ \TT^\gamma + \TT\mu^{-1}\log\mu+
 \norm{dE(u)}_*^2
 \eeq 
for any such $u$ and for $\gamma:=\min(\gamma_1,\gamma_2)\in(1,2]$. 

In order to prove the bounds on $\abs{E(u)-E^*}$ claimed in Theorems \ref{thm:2} and \ref{thm:3} we hence need to show that all these quantities are controlled by $\norm{dE(u)}_*$. For this it is convenient to choose $\fmu:=\log\mu$ which of course satisfies \eqref{ass:fmu} for $\bar \mu$ sufficiently large. 

We distinguish between three different cases, depending on whether terms involving $\de$, $\TT$ or $\nutot$ dominate, beginning with

\textbf{Case 1: } Suppose that $\zz$ is so that 
\beq
\label{ass:de-dominates}
\nutot (\log\mu)^\half\leq \tfrac{\de}{\log\mu} \text{ and } 
 \TT (\log\log\mu)^{-1/2}\leq \tfrac{\de}{\log\mu}.
\eeq
In this case we let $y_\zz$ be the unit element in the direction of a variation $\peps \zz_\eps$ as obtained in Lemma \ref{lemma:energy-expansion-rotation}.
 Inserting \eqref{ass:de-dominates} into the estimates obtained in Lemmas
 \ref{lemma:first-variation}, \ref{lemma:second-variation} and   \ref{lemma:energy-expansion-rotation}
 gives
 \beq
 \norm{dE(\zz)}_*\leqs \tfrac{\de(\log\log\mu)^\half}{\log\mu}, 
 \norm{d^2E(\zz)(y_\zz)}_*\leqs \tfrac{(\log\log\mu)^\half}{\log\mu}
  \text{ and } 
  dE(\zz)(y_\zz)\geq c \tfrac{\de}{\log\mu}.
 \eeq
 This ensures that $Q(\zz,y_\zz)\leqs \frac{\log\log\mu}{\log\mu}$ is small so  
 Lemma \ref{lemma:first step} yields
\beqs
\tfrac{\de}{\log \mu}\leqs \abs{dE(u)(y_\zz)}+\norm{dE(u)}_*^2\leqs  \norm{dE(u)}_*.
\eeqs
Combined with \eqref{ass:de-dominates} this 
immediately implies that $\mu^{-\min(n_0^*,n_1^*)}\leqs \nutot\leqs \norm{dE(u)}_*$  and inserting these estimates into 
 \eqref{est:energy-defect-general} gives
 $$\abs{E(u)-E^*}\leqs \tfrac{\de}{\log\mu} +\big(\tfrac{\de}{\log\mu} \big)^{\gamma} (\log\log\mu)^{\gamma/2}+\norm{dE(u)}_*^2\leqs \tfrac{\de}{\log\mu}+\norm{dE(u)}_*^2\leqs \norm{dE(u)}_*.
 $$

Having thus seen that the claims of Theorems \ref{thm:2} and \ref{thm:3} hold in this case where $\de$ dominates we now consider

\textbf{Case 2:} Suppose that $\zz$  is so that
\beq
\label{ass:tau-dominates}
\tfrac{\de (\log\log\mu)^{1/2}}{\log \mu}\leq \TT \text{ and } \bar \nu (\log\mu)^\half\leq \TT  .
\eeq 
In this situation  \eqref{est:energy-defect-general} 
ensure that $\abs{E(u)-E^*}\leqs \TT$ and we construct $y_\zz$ using a variation as considered in Lemma \ref{lemma:energy-expansion-tension} for $i$ so that $\TT_i=\TT$. Lemmas \ref{lemma:energy-expansion-tension} and  \ref{lemma:first-variation}  yield
\beq
\label{est:case2-1}
dE(\zz)(y_\zz)\geqs \TT \text{ and } \norm{dE(\zz)}_*\leqs \TT 
\eeq
while Lemma \ref{lemma:second-variation} and \eqref{est:eta-tau} ensure that we can make $\norm{d^2E(\zz)(y_\zz,\cdot)}_*$ smaller than any given constant by reducing $\si$ accordingly. Hence we can ensure that $Q(\zz,y_\zz)\leqs \bar\La^{-1}$ and 
apply Lemma \ref{lemma:first step} to get
$$\TT\leqs \norm{dE(u)(y_\zz)}+\norm{dE(u)}_{*}^2\leqs \norm{dE(u)}_{*}.
$$
Combined with \eqref{est:energy-defect-general} and  \eqref{ass:tau-dominates} this immediately yields the claims of the theorems.

Finally, if $\zz$ is so that neither of the above cases applies then we must have
 
\textbf{Case 3:}
 \beq
 \label{ass:nu-dominates}
 \tfrac{\de}{\log\mu}\leq \nutot (\log\mu)^\half \text{ and } \TT\leq \nutot (\log\mu).
 \eeq
%
 
In this case we can choose a variation as in Lemma \ref{lemma:energy-expansion-rational} to obtain $y_\zz$ with
\beqs
dE(\zz)(y_\zz)\geqs \nutot \text{ and } \norm{d^2E(\zz)(y_\zz,\cdot)}_*\leqs \mu^{-1}\log\mu. \eeqs
At the same time Lemma \ref{lemma:first-variation} and \eqref{ass:nu-dominates} ensure that  $\norm{dE(\zz)}_*\leq \nutot \log\mu$ and hence that $Q(\zz,y_\zz)\leqs \mu^{-1}(\log\mu)^2$ will be small. 

Lemma \ref{lemma:first step} hence implies that 
$\nutot\leqs \norm{dE(u)}_{*}$. This immediately yields the claimed bound on the bubble scale and, as \eqref{ass:nu-dominates} ensures that all terms that involve $\de$ or $\TT$ in \eqref{est:energy-defect-general} are small compared to $\nutot$, also that 
$\abs{E(u)-E^*}\leqs \norm{dE(u)}_*$. 

Having thus completed the proofs of both Theorems \ref{thm:2} and \ref{thm:3} we finally 
explain how the above arguments have to be modified to obtain the

\begin{proof}[Proof of Corollary \ref{cor:thm2}] $ $\\
Here it is convenient to choose $\fmu:=\mu^{\si_1}$ where we fix $\si_1>0$ small enough so that all results in Section \ref{sect:key-lemmas} hold for this $\si_1$ and for all sufficiently small $\si$ and large $\bar\mu$. 
 Compared to the previous proof this choice of $\fmu$ gives us more flexibility with regards to which type of variation we can choose  as we obtain better bounds on the error terms in the energy expansions. This is useful as variations of the bubble are preferable to variations of the base when proving $L^2$-\Lojns-estimates since they result in elements $y_\zz$ with smaller $L^2$-norm. 

To prove this corollary we first note that in this setting
 \eqref{est:energy-defect-general} reduces to
 \beq
\label{est:energy-defect-special}
 \abs{E(u)-E^*}\leqs \tfrac{\de^2}{\log \mu}+\min(\nu_0,\nu_1)+\mu^\mhalf \nutot
+\norm{\tauS(u)}_{L^2(S^2,g_{S^2})}^2
 \eeq 
as all elements of $\HH_1^\si(U_i^*)$ are harmonic and as $\norm{dE(u)}_*\leqs \norm{\tauS(u)}_{L^2(S^2,g_{S^2})}$.  

We fix $c_1=\frac{\si_1}4 \in (0,\frac1{16})$ and distinguish between cases dependent on whether
 $
 \mu^{c_1}\nutot\leq \tfrac{\de}{\log\mu}. 
$

If  $
 \mu^{c_1}\nutot\leq \tfrac{\de}{\log\mu} 
$  then we can argue exactly as in the proof of Corollary \ref{cor:thm1} to see that 
$$\tfrac{\de}{(\log\mu)^2}\leqs \norm{\tauS(u)}_{L^2(S^2,g_{S^2})}^2,$$ 
compare \eqref{est:proof-1-cor}.
As $\nutot\geqs \mu^{-n^*}$, $n^*:=\min(n_1^*,n_0^*)$, we furthermore know that in this case
$$\nutot^{1-\tfrac{c_1}{n^*}}\leqs \mu^{c_1}\nutot\leq \tfrac{\de}{\log\mu}\leqs \norm{\tauS(u)}_{L^2(S^2,g_{S^2})} $$
 and 
hence immediately obtain from \eqref{est:energy-defect-special} that
$$\abs{E(u)-E^*}\leqs \norm{\tauS(u)}_{L^2(S^2,g_{S^2})}^\al \text{ for } \al:= (1-\tfrac{c_1}{n^*})^{-1}>1.$$

It hence remains to consider the case
$\tfrac{\de}{\log\mu}\leq \mu^{c_1}\nutot$ where 
 \eqref{est:energy-defect-special} and Lemma \ref{lemma:first-variation} give
\beq
\label{est:proof_cor-2-1}
\abs{E(u)-E^*}\leqs \min(\nu_0,\nu_1)+\mu^\mhalf \nutot +\norm{\tauS(u)}_{L^2(S^2,g_{S^2})}^2 \text{ and } 
\norm{dE(z)}_{*}\leqs \mu^{c_1}(\log\mu)^\half \nutot,
\eeq
while Lemmas \ref{lemma:energy-expansion-rational} and \ref{lemma:second-variation} give
\beq \label{est:proof_cor-2-2}
dE(\zz)(y_\zz)\geq c\nu_{i^*}-C\mu^{-2c_1}\nutot \text{ and } \norm{d^2E(\zz)(y_\zz,\cdot)}_*\leqs \mu^{-1+c_1}(\log\mu)^\half 
\eeq
for the unit element $y_\zz$ that is generated by a variation of $q_i$ as considered in Lemma \ref{lemma:energy-expansion-rational}.

If 
 $\mu^{c_1}\nu_0\geq \nu_1$ we can vary the rational map of the bubble to get a  $y_\zz$ with
 $$\norm{y_\zz}_{L^2(S^2,g_{S^2})}\leqs \mu^{-1}(\log\mu)^{\half} 
 \text{ and } dE(\zz)(y_\zz)\geqs \nu_{0}\geqs \mu^{-c_1}\nutot,
  $$
 compare \eqref{est:norm-yz} and \eqref{est:proof_cor-2-2}. Hence $Q(\zz)(y_\zz)\leqs \mu^{-1+3c_1}\log\mu$ is small and we can apply the estimate \eqref{est:first-step} from  Lemma \ref{lemma:first step} to see that 
$$\nu_0\leqs \mu^{-1}(\log \mu)^\half\norm{\tauS(u)}_{L^2(S^2,g_{S^2})} + \norm{dE(u)}_*^2.
$$
We can thus bound 
$$\mu^{-\half}\nutot+\nu_0\leqs \nu_0\leqs \norm{\tauS(u)}_{L^2(S^2,g_{S^2})}^\al \text{ for any } \al\in (1, (1-\tfrac{1}{n^*})^{-1})$$ and 
\eqref{est:proof_cor-2-1} implies that 
the claim of the corollary holds true for any such exponent.

In the final case where 
$\nu_0\leq \mu^{-c_1} \nu_1$ and 
$\tfrac{\de}{\log\mu}\leq \mu^{c_1}\nutot$
we instead consider a variation of $q_0$ and use the above estimates and Lemma \ref{lemma:first-variation} to see that 
$\nutot\leqs \nu_1\leqs \norm{dE(u)}_*\leqs \norm{\tauS(u)}_{L^2(S^2,g_{S^2})}$. From \eqref{est:proof_cor-2-1} we hence obtain that 
$$\abs{E(u)-E^*}\leqs 
\mu^{-c_1} \nutot+\norm{\tauS(u)}_{L^2(S^2,g_{S^2})}^2 \leqs  \nutot^\al +\norm{\tauS(u)}_{L^2(S^2,g_{S^2})}^2
\leqs \norm{\tauS(u)}_{L^2(S^2,g_{S^2})}^\al .
$$
for any $ \al\in (1,1+ \frac{c_1}{n^*})$.
\end{proof}

\section{Variations of the energy along $\ZZ_0$}\label{section:proof-of-lemmas}
In this section we carry out the proofs of the key lemmas on the behaviour of the energy  on the manifold $\ZZ_0$ of singularity models, i.e.~of Lemmas \ref{lemma:general-energy-expansion}, \ref{lemma:energy-expansion-rational}, \ref{lemma:energy-expansion-rotation} and \ref{lemma:energy-expansion-tension}, that we stated in Section \ref{sect:key-lemmas}. 

Before we turn to these proofs we
collect a number of estimates, which will be used both throughout this section as well as in the subsequent Section \ref{sect:variations} where we will prove the properties of $dE(\zz)$ and $d^2E(\zz)$ claimed in Lemmas \ref{lemma:positive-def-2ndvar}, \ref{lemma:first-variation} and  \ref{lemma:second-variation}.

 \subsection{Technical estimates}
 \label{subsec:technical} $ $\\
Given any $\zz\in \ZZ_0$ we work in fixed stereographic coordinates which we can assume to be scaled in a way that the corresponding function $q_0$ satisfies $\abs{q_{n_0^*}(q_0)}=1$. As we represent $q_1(\frac{1}{\mu\cdot})$ for any fixed $\zz$ by a function 
$q_1$ satisfying $\abs{a_{n_1^*}(q_1)}=1$ we hence get $\mu_1(\zz)=\mu$, $\mu_0(\zz)=1$ and
\beq
\label{eq:nice-radii}
r_1(\zz)=\fmu \mu^{-1}\ll \hat r(\zz)=\mu^\mhalf\ll r_0(\zz)=\fmu^{-1}.\eeq
To simplify the notation we will in the following write for short  $\phi_\mu=\phimul$
for the cut-off function \eqref{def:phi-mu-la} used in the definition of $\zz$ and also use the  slight abuse of notation of writing 
 $\peps \phi_\mu:= \peps\vert_{\eps=0} \phi_{r_0(\zz_\eps),r_1(\zz_\eps)}$ even though \eqref{eq:nice-radii} is only applicable at $\eps=0$.

For most of the proofs it will be more convenient to 
 view the rational maps $q_i$ as maps into the sphere $S^2_{p^*}:=S^2-(0,0,1)$, which is shifted in a way that elements of $\RR$ map $0$ to the origin in $\R^3$.  
We hence associate to any given $U:\Chat\to N$ and any rational map $q:\Chat\to \Chat$ the maps $\hat U:= U\circ \pi_{p^*}^{-1}:S^2_{p^*}\to N$ and $\hat q= \pi_{p^*} \circ q :\Chat\to S^2_{p^*}$ for
  $\pi_{p^*}:=\pi-(0,0,1):\R^2\to S^2_{p^*}$. 

As at least one of $q_0(z)$ or $q_{1,\mu}(z):= q_1(\frac{1}{\mu z})$ will be small for every $z\in\C$ we can easily check that the map $\hat q_\mu= \pi_{p^*} \circ (\hat q_0\circ \pi_{p^*}^{-1}+\hat q_{1,\mu}\circ \pi_{p^*}^{-1})$ that represents $q_\mu=q_0+q_{1,\mu}$ is so that 
\beq
\label{est:qmu-by-q}
 \abs{\rmmuhat}\leqs \abs{\rmbbhatmu}+\abs{\rmbmhat} \text{ and }
\abs{\na \hat q_\mu}\leqs  \abs{\na \rmbmhat}+\abs{\na \rmbbhatmu}.
 \eeq

Here and in the following we use the convention that unless specified otherwise all quantities are computed with respect to the euclidean metric on $\R^2$, rather than with respect to the metric that is induced by $g_{S^2}$.

We can hence estimate the maps 
$\tilde u_i:=
u_i-U_i(0)=\hat U_i\circ \hat q_\mu-U_i(0)$ by
 \beq\label{est:ui-tilde}
  \abs{\tilde u_i}
  \leqs \abs{\rmbbhatmu}+\abs{\rmbmhat},\quad 
 \abs{\na u_i} 
 \leqs \abs{\na \rmbmhat}+\abs{\na \rmbbhatmu} 
 \text{ and } \abs{\Delta u_i}\leqs \abs{ \na \rmbbhatmu}^2+\abs{\na \rmbmhat}^2,
 \eeq
where the last estimate follows as the maps $\hat q_i$ are harmonic maps into the sphere.

Since the maps $\hat q_i\circ \pi_{p^*}^{-1}:S_{p^*}^2\to S_{p^*}^2$ and their variations are uniformly bounded in $C^k$, we have 
\beq\label{est:base-simple}
\abs{\rmbmhat(z)} +\abs{\peps\rmbmhat}
 \leqs \abs{\pi(z)-\pi(0)}\leqs 
\tfrac{\abs{z}}{1+\abs{z}}, \quad \abs{\na \rmbmhat}+ \abs{\peps\na \rmbmhat}\leqs\abs{\na \pi(z)}\leqs 
 \tfrac{ 1}{1+\abs{z}^2}
 \eeq
 and 
 \beq
 \label{est:bubble-simple}
 \abs{\rmbbhatmu}+\abs{\peps\na \rmbmhat}\leqs \abs{\pi(\tfrac{1}{\mu z})-\pi(0)}\leqs 
 \tfrac{1}{1+\abs{\mu z}}, \quad \abs{\na \rmbbhatmu}+\abs{\peps \na \rmbbhatmu}\leqs \abs{\na (\pi(\tfrac{1}{\mu z}))}\leqs 
 \tfrac{\mu}{1+\abs{\mu z}^2}.
\eeq
In particular we can always estimate $\abs{\na \hat q_\mu}\leqs \rho_\zz$, though will later need more refined bounds in settings  where we deal with  rational maps $q_{0,1}^*$ that are branched at $z=0$, compare   \eqref{est:rmbm-expansion} and \eqref{est:tilde-om-expansion} below.

The above estimates imply in particular that 
\beq
\label{est:standard-est-annulus}
\norm{\abs{\rmbbhatmu}+\abs{\rmbmhat}+\abs{\peps \rmbbhatmu}+\abs{\peps\rmbmhat}}_{L^\infty(A)}+
\norm{\abs{\na \rmbbhatmu}+\abs{\na \rmbmhat}}_{L^2(A)}\leqs \fmu^{-1},
\eeq
where we recall that the annuli $A$, $\hat A$ and $A^*$ are as defined in \eqref{def:annuli}. 
Similarly we have  
\beq
\label{est:I-phimut} 
 \int_{\Om_1} \phimu^2\rho_\zz^2\leqs \fmu^{-2}(\log\mu)^{-2},\,  \int_{\Om_1} \phimu\rho_\zz^2\leqs \fmu^{-2}(\log\mu)^{-1},\, \int_{A}\abs{\na\phimu} \rho_\zz\leqs \fmu^{-1}(\log\mu)^{-1} \eeq
for the cut-off function $\phi_\mu$ that is defined in \eqref{def:phi-mu-la} and that satisfies 
 \beqa
 \label{est:phi}
 \abs{\na \phi_\mu}\leqs c_\mu r^{-1} \mathbbm{1}_A, \quad 
\abs{\Delta \phi_\mu} \leqs c_\mu r^{-2} \mathbbm{1}_{A^*} 
\text{ for } c_\mu:=\log(\tfrac{r_0}{r_1}).
\eeqa
As $\hat \gamma$ is a geodesic with $\hat \gamma(0)=U_1(0)$ and $\abs{\hat\gamma'}=\de$ we have
 \beq
 \label{est:gamma-tilde}
 \abs{ \tgabb} \leq \de \phi_\mu 
 \text{ with }\abs{\na \gamma}\leqs \de \abs{\na \phimu}\leqs c_\mu \de r^{-1} \mathbbm{1}_A
 \eeq 
and
\beqa
\label{eq:tension-gamma}
\tau(\gamma)=\Delta \phimu\cdot\hat\gamma'\circ \phimu \text{ while }
P_{\gamma}^\perp(\Delta \gamma)= \abs{\na \phimu}^2 \cdot A(\hat\gamma)(\hat \gamma',\hat\gamma')\circ \phimu
\eeqa
so
\beqa
\label{est:der-gamma}
    \abs{\tau(\gamma)}\leqs \de \abs{\Delta \phimu}\leqs \de c_\mu r^{-2}  \mathbbm{1}_{A^*}\quad
  \text{ and } 
  \quad \abs{P_{\gamma}^\perp(\Delta \gamma)}\leqs \de^2 \abs{\na \phimu}^2\leq c_\mu^2r^{-2}\de^2 \mathbbm{1}_A.
\eeqa
As $\cmu\sim (\log\mu)^{-1}$ we hence get
\beq
\label{est:norm-tension-gamma}
\norm{\tau(\gamma)}_{L^1}\leqs \de (\log\mu)^{-1},\quad 
\norm{P_{\gamma}^\perp(\Delta \gamma)}_{L^1}\leqs \de^2 (\log\mu)^{-1}
\text{ and } \norm{\Delta \gamma}_{L^1}\leqs  \de (\log\mu)^{-1}.
\eeq
To obtain suitable bounds on the variations of $\phimu$ we recall that $\abs{\peps \mu_i}\leqs 
\mu_i$ and that $\abs{\partial_\mu \fmu}\mu\leq \fmu$. This ensures that 
\beq
\label{est:var- radii}
\abs{\peps r_i}\leqs r_i, \quad \abs{\peps \hat r}\leqs \hat r \text{ and } \abs{\peps c_{r_0,r_1}}\leqs \cmu^2\leqs (\log\mu)^{-2}
\eeq 
which in turn imply that
\beqa \label{est:peps-phi}
\abs{\peps \phi_\mu} &\leqslant c_\mu  \mathbbm{1}_{A} ,\quad 
\abs{\peps \na \phi_\mu}\leqslant c_\mu^2r^{-1}\mathbbm{1}_A+c_\mu r^{-1}\mathbbm{1}_{A^*},\quad 
\abs{\peps \Delta \phi_\mu}\leqslant c_\mu r^{-2}\mathbbm{1}_{A^*}.
\eeqa
We also observe that the definition of $\phimu$ and the above estimates ensure that  
\beq
\label{est:norm-phi}
\norm{\na \phimu}_{L^2}^2=
2\pi \cmu +O((\log\mu)^{-2}) \text{ and that }
\abs{\peps \norm{\na \phi_\mu}_{L^2(\R^2)}^2} \leqs (\log\mu)^{-2}.
\eeq
As $\abs{\peps \hat \gamma }+\abs{\peps \na \hat \gamma} \leqs \etazero$, $\eta_0$ defined in \eqref{def:quantities-var},
we can furthermore  bound
\beq 
\label{est:var-gamma}
\abs{\peps \gamma}\leqs \etazero+c_\mu \delta\mathbbm{1}_{A}, \quad
\abs{\peps \tgabb}
 \leqs \etazero \phimu+c_\mu \delta \chiA, \quad \abs{\peps \tgabm}\leqs \etazero(1-\phimu) + c_\mu \delta\chiA
 \eeq
and
 \beq
 \label{est:var-gamma-der}
\abs{\peps \na \gamma}\leqs (\etazero+\de\cmu)\cmu r^{-1}\chiA + \de\cmu r^{-1}\chiAstar
\text{ so } \norm{\peps \na \gamma}_{L^2}\leqs \etazero(\log\mu)^\mhalf +\de (\log\mu)^{-1}.
\eeq
We also note that 
 $\abs{q_0(z)}\leqs \abs{z}$ is small 
on the set $\{z: \abs{z}\leq 2\mu^\mhalf\}$ where we consider $v_1$ and hence $j_1=d\beta(0)(q_0(z))$. On this set we can hence bound 
 $\abs{q_0(z)}\leqs \abs{\hat q_0(z)}$ and thus have
 \beqs 
  \abs{\modbb}\leqs \sitilt \abs{\rmbmhat},\quad  \abs{\na \modbb}\leqs \sitilt \abs{\na \rmbmhat}, \quad \Delta\modbb=0 
\eeqs
as well as 
\beq
\label{est:var-maps-rat}
\abs{\peps \modbb}\leqs \sitilt \abs{\peps \rmbmhat} +
\etatilt\abs{\rmbmhat},  \,
 \abs{\peps \na \modbb}\leqs 
\sitilt \abs{\peps\na  \rmbmhat} + 
\etatilt\abs{\na \rmbmhat}. 
\eeq
Analogue estimates hold for 
$\modbm$ on the set $\abs{z}\geq \half \mu^\mhalf$ where $v_0$, and hence $j_0$, are considered.

 We also observe that we can combine the above estimates with  
\eqref{est:var-phicut} to see that if  $\zz_\eps$ is a variation along which 
$q_0$ and $U_0$ are fixed then
\beqa
\norm{\peps \zz}_{L^2(S^2,g_{S^2})}&\leqs \norm{\peps v_1 \abs{\na \pi}}_{L^2(\DD_{2\hat r})}+
 \norm{\peps v_0 \abs{\na \pi}}_{L^2(\R^2\setminus \DD_{\frac{\hat r}2})}+\abs{\hat A}^\half \norm{v_1-v_0}_{L^\infty}\\
 &\leqs \norm{z}_{L^2(\DD_{2\hat r})}+\norm{\tfrac{1}{
 1+\mu\abs{z}}
 \abs{\na \pi}}_{L^2(\R^2)}+\tfrac{\de+\eta_0}{\log\mu}(\norm{\phi_\mu}_{L^2(\R^2)}+\abs{\DD_{2\hat r}}^\half)+\mu^{-1}\\
 &\leqs \mu^{-1}(\log\mu)^\half+\tfrac{\de+\eta_0}{f_\mu\log\mu }
 \label{est:norm-yz-proof}
 \eeqa
as claimed in the proofs of Corollaries \ref{cor:thm1} and \ref{cor:thm2}.

\subsection{General variations of the energy along $\ZZ$} $ $

The purpose of this section is to derive the expression 
 for $\peps E(\zz_\eps)$ claimed in Lemma 
\ref{lemma:general-energy-expansion}.

Thanks to the symmetry of the construction and the fact that on  $\Om_1=\{z:\abs{z}\leq \mu^\mhalf\}$ our map
 $\zz$ is essentially described by $\zz_1=\pi_N(v_1)$
the main step in this proof is to show

\begin{lemma}\label{lemma:general-energy-expansion-proof}
If $\zz_\eps$ is a variation in $\ZZ_0$ so that \eqref{ass:variations} is satisfied then  
\beqa 
\label{eq:en-exp-main-lemma}
\int_{\Om_1} \peps \zz_1\Delta \zz_1 &=-\int_{\Om_1}\modbb\peps \Delta \bb - \peps \bb \tau( \bb)
+\int_{A\cap \Om_1}  \Delta \gamma\peps \gamma
+\err_1
\eeqa 
for 
an error term which is bounded by 
$\abs{\err_1}\leq R_1$ for 
\beqa
\label{est:main-error}
 R_1:=& \,\delta (\log\mu)^{-1} \fmu^{-2}+\de^2(\log \mu)^{-1} + \etazero [\sitilt I_{3} +\de (\log\mu)^{-1}\fmu^{-1}]\\
&+
\sitilt^2 (I_1+I_2)+ \sitilt\etatilt I_1
+\sitilt I_{\partial \Om_1}\\
&+\TT [\sitilt (I_3+I_4)+\eta_0(\log\mu)^{-1}\fmu^{-2}+\abs{\peps d\beta(0)}I_3]+\eta_\tau \sitilt I_3.
\eeqa

\end{lemma}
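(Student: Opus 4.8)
The plan is to substitute the explicit ansatz $\zz_1=\pi_N(v_1)$, $v_1=\bb+\gabb+\modbb$, into $\int_{\Om_1}\peps\zz_1\,\Delta\zz_1$ and to extract, one at a time, the three leading contributions, estimating everything else by the integrals assembled in Section~\ref{subsec:technical}. First I would record the structure of $\zz_1-v_1$. Writing $N_1:=\pi_N(v_1)-v_1$ and Taylor-expanding $\pi_N$ around $\bb\in N$ gives $N_1=-P^\perp_{\bb}(\gabb+\modbb)+O((\abs{\gabb}+\abs{\modbb})^2)$, with analogous expressions for $\na N_1$ and $\peps N_1$ (now involving $d^2\pi_N$, $d^3\pi_N$, and hence also $\etatilt$ and $\eta_0$). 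The key point is that the linear parts of $\gabb$ and $\modbb$ are tangent to $N$ at $\bbu(0)=U_1(0)$: indeed $\hat\gamma$ is a curve in $N$, so $\gamma-U_1(0)=\hat\gamma'(0)\,\phi_\mu+O(\phi_\mu^2)$ with $\hat\gamma'(0)\in T_{U_1(0)}N$, and $d\beta(0)$ maps into $T_{U_1(0)}N=T_{U_0(0)}N$ since $U_0(0)=U_1(0)$. Hence $P^\perp_\bb(\gabb+\modbb)$ is the product of $\de\phi_\mu$ (resp.\ $\sitilt\abs{\rmbmhat}$) with $\abs{\bb-U_1(0)}\leqs\abs{\rmbmhat}+\abs{\rmbbhatmu}$, i.e.\ $N_1$ is of genuinely quadratic size, which is precisely what produces the $\de^2$, $\sitilt^2(I_1+I_2)$ and $\sitilt\etatilt I_1$ summands of $R_1$. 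Since $\peps\zz_1$ is tangent to $N$ at $\zz_1$ and $d\pi_N(v_1)$ differs from the orthogonal projection onto $T_{\zz_1}N$ by $O(\dist(v_1,N))$, the integrand $\peps\zz_1\cdot\Delta\zz_1$ equals $\peps v_1\cdot\Delta v_1$ up to terms controlled pointwise by $\abs{N_1}$, $\dist(v_1,N)$ and $\abs{\peps v_1}$ times $\abs{\na v_1}^2+\abs{\Delta v_1}$, all of which are bounded by the $I_k$ entering $R_1$ after inserting \eqref{est:ui-tilde}, \eqref{est:base-simple} and \eqref{est:bubble-simple}.

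Next I would expand $\peps v_1\cdot\Delta v_1=(\peps\bb+\peps\gabb+\peps\modbb)(\Delta\bb+\Delta\gamma)$, using $\Delta\modbb=0$, $\Delta\gabb=\Delta\gamma$, and that $\gamma$ is harmonic off $A^*$ so that $\Delta\gamma$ is supported on $A^*\cap\Om_1\subset A\cap\Om_1$. The diagonal term $\peps\gamma\,\Delta\gamma$ is exactly the claimed $\int_{A\cap\Om_1}\Delta\gamma\,\peps\gamma$, while in $\peps\bb\,\Delta\bb$ we may replace $\Delta\bb$ by its tangential part $\tau(\bb)$ (legitimate since $\peps\bb\in T_\bb N$), producing the claimed $-\peps\bb\,\tau(\bb)$ once the overall sign bookkeeping is fixed by the integrations by parts below. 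The cross term $\peps\modbb\,\Delta\bb$ is converted into $-\modbb\,\peps\Delta\bb$ via $\peps(\modbb\,\Delta\bb)=\peps\modbb\,\Delta\bb+\modbb\,\peps\Delta\bb$ together with the elementary bound $\int_{\Om_1}\abs{\modbb}\abs{\Delta\bb}\leqs\sitilt\mu^{-1}$ (and its variation), which follows from \eqref{est:ui-tilde}--\eqref{est:bubble-simple} after the rescaling $w=\mu z$. The remaining cross terms $\peps\bb\,\Delta\gamma$, $\peps\modbb\,\Delta\gamma$, $\peps\gabb\,\Delta\bb$ are treated by integration by parts: moving the Laplacian onto the harmonic factor $\modbb$, or onto $\gamma$ away from $A^*$, leaves a bulk term supported on the thin annulus $A^*\cap\Om_1$ and a boundary flux through $\partial\Om_1=\partial\DD_{\hat r}$; these are estimated using \eqref{est:phi}, \eqref{est:gamma-tilde} and \eqref{est:der-gamma} together with $c_\mu\sim(\log\mu)^{-1}$, $r_1=\fmu\mu^{-1}$ and $\hat r=\mu^{\mhalf}$, which is exactly where the terms $\de(\log\mu)^{-1}\fmu^{-2}$, $\de^2(\log\mu)^{-1}$ and $\de\etazero(\log\mu)^{-1}\fmu^{-1}$ of $R_1$ originate.

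Two further points complete the bookkeeping. Since $\Om_1=\DD_{\hat r(\zz_\eps)}$ itself depends on $\eps$, every exchange of $\peps$ with $\int_{\Om_1}$ produces a boundary flux through $\partial\DD_{\hat r}$ weighted by $\peps\hat r$; by \eqref{est:var- radii} one has $\abs{\peps\hat r}\leqs\hat r$, and the restrictions of $v_1$, $\Delta v_1$ and their variations to $\abs{z}=\hat r=\mu^{-1/2}$ are small precisely because $\hat r$ is the geometric mean of the base scale and the inverse bubble scale, so the "base tail" ($\abs{\na\rmbmhat}\sim1$ at $\abs{z}=\hat r$ but multiplied by $\abs{\rmbmhat}\sim\hat r$) and the "bubble tail" ($\abs{\na\rmbbhatmu}\sim\mu^{-1/2}$, $\abs{\rmbbhatmu}\sim\mu^{-1/2}$ at $\abs{z}=\hat r$) both contribute powers $\mu^{-1/2}$, i.e.\ the $\nutot\Ctilt\mu^{\mhalf}$-type terms; here one also sees where $\etazero$ enters. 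Finally, the appearance of $\eta_\tau$ in $R_1$ is traced to $\modbb\,\peps\Delta\bb=\modbb\,\Delta\peps\bb$: differentiating $\bb=\bbu\circ q_\mu$ and using the conformal transformation law \eqref{eq:trafo-tension}, $\peps\Delta\bb$ contains $\thalf\abs{\na\hat q_\mu}^2\,\peps\tau(\bbu\circ\pi^{-1})$, whose $C^0$ norm is controlled by $\eta_\tau$ through \eqref{est:key-HH-Ck} and \eqref{est:eta-tau}, the remaining pieces being covered by $\TT$ and the $C^k$-bounds \eqref{ass:variations}. The main obstacle is not any single step but the sheer amount of bookkeeping: there are many cross terms and boundary fluxes, and one must verify that each is dominated by one of the explicitly listed summands of $R_1$, keeping the $\de$-linear, $\de^2$, $\sitilt^2$, $\sitilt\etatilt$ and $\TT$-weighted contributions carefully separated, since in the applications (Lemmas~\ref{lemma:energy-expansion-rational}--\ref{lemma:energy-expansion-tension}) exactly one of these will be the dominant term and the estimate must be sharp in it.
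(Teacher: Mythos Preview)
Your outline has the right shape but contains a factual error and a genuine gap that together derail the estimate.

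The factual error: you assert $U_0(0)=U_1(0)$ so that $d\beta(0)$ maps into $T_{U_1(0)}N$. This holds only for the \emph{limiting} maps $U_i^*$; for general $U_i\in\HH_1^\si(U_i^*)$ one has $\delta=\dist_N(U_0(0),U_1(0))\neq 0$, and consequently $P_{U_1(0)}^\perp j_1=(P_{U_0(0)}-P_{U_1(0)})\,dU_0(0)(q_0)$ is of order $\delta\,|\hat q_0|$, i.e.\ linear in $|\hat q_0|$ with a $\delta$-small coefficient rather than quadratic. The paper records this in \eqref{est:Pu1-j} and \eqref{est:Pperp-u}, and it is essential for controlling the boundary flux $B_1$ and the bulk error $T_1^{(3)}$ in the paper's decomposition. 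Also, the domain $\Om_1$ does \emph{not} move with $\eps$ here (the quantity is $\int_{\Om_1}\peps\zz_1\,\Delta\zz_1$, not $\peps\int_{\Om_1}\dots$), so your moving-boundary discussion is a red herring; the $\eta_0$ contributions to $R_1$ come from $\peps\gamma$ and $\peps\tgabb$ via \eqref{est:var-gamma}.

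The gap: your conversion of $\int\peps j_1\,\Delta u_1$ into $-\int j_1\,\peps\Delta u_1$ via ``$\int|j_1||\Delta u_1|\leqs\sitilt\mu^{-1}$ and its variation'' fails, because $|\Delta u_1|\sim\rho_q^2$ (the second-fundamental-form part $A(u_1)(\na u_1,\na u_1)$ is of order one, not $\TT$), so the discarded term $\int|\peps(j_1\Delta u_1)|$ is of the \emph{same} order $\sitilt I_3\sim|d\beta(0)|\nutot$ as the main term you wish to isolate; such an error is not in $R_1$ and would swamp the leading contribution in Lemma~\ref{lemma:energy-expansion-rational}. The paper avoids this by working with the expansion $\zz_1=u_1+\tgabb+j_1-P_{u_1}^\perp(\tgabb+j_1)+\err_{u_1}$ and splitting $j_1=P_{u_1}j_1+P_{u_1}^\perp j_1$: for the tangential half one has $\int\peps(P_{u_1}j_1)\,\Delta u_1=-\int P_{u_1}j_1\,\peps\Delta u_1+T_2$ with $T_2=\int\peps(j_1\,\tau(u_1))$, which is small because $|\tau(u_1)|\leqs\TT\rho_q^2$ (this is the actual source of the $\TT\sitilt(I_3+I_4)$, $\TT|\peps d\beta(0)|I_3$ and $\eta_\tau\sitilt I_3$ summands of $R_1$); for the normal half one integrates $\int\peps u_1\,\Delta(P_{u_1}^\perp j_1)$ by parts to get $-\int P_{u_1}^\perp j_1\,\peps\Delta u_1$ plus a boundary term $B_1$, controlled via the correct bound \eqref{est:Pperp-u}. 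Adding the two halves reproduces the full $-\int j_1\,\peps\Delta u_1$. The same projection-and-cancellation mechanism (the paper's $T_1^{(4)}$, using \eqref{eq:d2pi-A}) is what would make your first reduction $\peps\zz_1\,\Delta\zz_1\to\peps v_1\,\Delta v_1$ precise: as you stated it, the remainder includes $|\peps v_1|\,(|\na v_1|^2+|\Delta v_1|)\sim\rho_q^2$, which is order one.
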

Here $\de,  \tau, \eta_0$ and $\eta_\tau$ are as in \eqref{def:quantities-z} and \eqref{def:quantities-var} and the integrals 
$I_\cdot=I_{\cdot}(q_0,q_{1,\mu})$ are as in

\begin{lemma} \label{lemma:errors-energy-expansion}
For any variations $q_{0,1}^{(\eps)}$ in $\RR^\si(q_{0,1}^*)$ satisfying \eqref{ass:variations} we can bound 
\begin{eqnarray}
\label{est:I1}
& &I_1:= \int_{\Om_1} 
\abs{\rmbmhat}^2\rho_q^2+\abs{\rmbbhatmu}^2\abs{\na \rmbmhat}^2 \leq \nutot^2\log\mu \\
\label{est:I2}
& &I_2:=\int_{\Om_1} 
\abs{\rmbmhat}\abs{\peps \rmbmhat} \rho_q^2+\abs{\peps \hat q_0}\abs{\hat q_{1,\mu}}\abs{\na \hat q_0}\abs{\na q_{1,\mu}}
+\abs{\rmbbhatmu}\abs{\na \rmbmhat}^2+\abs{\rmbmhat}\rho_q \abs{\na \rmbmhat}
\leq \mu^\mhalf \nutot\\
\label{est:I3}
& &I_3:= \int_{\Om_1} \abs{\rmbmhat}\rho_q^2+ \abs{\na \rmbmhat}\abs{\rmbbhatmu}\rho_q \leq \nutot\\
\label{est:I4}
 & &I_4:= \int_{\Om_1} \abs{\peps \rmbmhat}\rho_q^2+\abs{\rmbmhat}\abs{\peps\na \rmmuhat}\rho_q\leqs \mu^{-1}\\
 \label{est:Ibdry}
& &I_{\partial \Om}:=
\int_{\partial \Om_1}\abs{\rmbmhat}^2+\abs{\rmbbhatmu}^2  dS
\leq \mu^\mhalf \nutot
\end{eqnarray}
where we write for short 
for $\rho_q:=\abs{\na \rmbmhat}+\abs{\na \rmbbhatmu}.$
\end{lemma}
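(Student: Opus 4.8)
The plan is to prove each of the five bounds directly, by the same recipe: insert sharp pointwise estimates for $\hat q_0,\na\hat q_0,\hat q_{1,\mu},\na\hat q_{1,\mu}$ and their $\peps$--variations, decompose $\Om_1=\DD_{\hat r}=\DD_{\mu^{\mhalf}}$ into the bubble core $\DD_{\mu^{-1}}$ and the transition region $\DD_{\hat r}\setminus\DD_{\mu^{-1}}$ (the latter dyadically if needed), and evaluate the resulting integrals in polar coordinates after the substitution $s=\mu\abs z$, which ranges over $s\leq\mu\hat r^2=\mu^{1/2}$ on $\Om_1$. For $I_4$ the crude estimates \eqref{est:base-simple} and \eqref{est:bubble-simple} already suffice: using $\abs{\peps\hat q_0}\leqs\abs z$, $\abs{\hat q_0}\leqs\abs z$ and $\abs{\peps\na\hat q_\mu}\leqs\rho_q$, both terms of $I_4$ are dominated by $\int_{\Om_1}\abs z\,\rho_q^2$, which after the substitution becomes a convergent one--dimensional integral of size $O(\mu^{-1})$. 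For $I_1,I_2,I_3$ and $I_{\partial\Om}$, by contrast, one has to feed in the refined expansions \eqref{est:rmbm-expansion}, \eqref{est:tilde-om-expansion}, which express $\hat q_0$ near $z=0$ through the Taylor coefficients $a_j(q_0)$ and $\hat q_{1,\mu}$ outside $\DD_{\mu^{-1}}$ through the $a_j(q_1)$, so that on each dyadic shell $\abs{\hat q_0}$ is controlled by $\nu_0$ times a power of $\mu\abs z$, $\abs{\hat q_{1,\mu}}$ by $\nu_1$ times a negative power of $\mu\abs z$, with matching bounds for the gradients.

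With these pointwise inputs the three bulk estimates become a bookkeeping of power integrals. On the core $\DD_{\mu^{-1}}$ the area factor $\sim\mu^{-2}$ absorbs the large gradient $\abs{\na\hat q_{1,\mu}}\leqs\mu$, and the refined bounds on $\hat q_0$ and $\na\hat q_0$ there make the core contribute $\leqs\nu_0^2\leq\bar\nu^2$ to $I_1$ and correspondingly less to $I_2,I_3$. On the transition region each integrand of $I_2,I_3$, after $s=\mu\abs z$, is an integrable expression of the form $s^k(1+s^2)^{-\ell}$, and summing over the dyadic shells gives $I_2\leqs\mu^{\mhalf}\bar\nu$ and $I_3\leqs\bar\nu$; for $I_1$ the scale--invariant integrand $\tfrac{ds}{s}$ appears, whose integral over $1\leq s\leq\mu^{1/2}$ produces the factor $\log(\mu\hat r^2)\sim\log\mu$ and hence $I_1\leqs\bar\nu^2\log\mu$. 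A short case analysis of the scale at which $\rho_q$ switches from being dominated by $\abs{\na\hat q_{1,\mu}}$ to $\abs{\na\hat q_0}$ (when it falls inside $\Om_1$ it lies between $\mu^{-1}$ and $\hat r$) confirms that the dominant shell is the expected one in each case. Finally $I_{\partial\Om}$ is immediate: evaluating \eqref{est:rmbm-expansion}, \eqref{est:tilde-om-expansion} on $\{\abs z=\hat r=\mu^{\mhalf}\}$ gives $\abs{\hat q_0}^2+\abs{\hat q_{1,\mu}}^2\leqs\nu_0+\nu_1\leqs\bar\nu$, and multiplying by the circumference $2\pi\hat r\leqs\mu^{\mhalf}$ yields the claim.

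The one genuinely delicate point, I expect, is the branched case where $n_0^*\geq2$ or $n_1^*\geq2$: there the crude bounds $\abs{\hat q_0}\leqs\abs z$ and $\abs{\hat q_{1,\mu}}\leqs\tfrac1{1+\abs{\mu z}}$ are too weak to yield the powers of $\bar\nu$ demanded in $I_1,I_2,I_3,I_{\partial\Om}$, so the whole estimate rests on extracting from \eqref{est:rmbm-expansion}, \eqref{est:tilde-om-expansion} a bound that \emph{uniformly} over all dyadic shells of $\Om_1$ controls both the leading term of $q_i^*$ at its zero of order $n_i^*$ and the small perturbation $q_i-q_i^*$ by $\nu_i$ times the appropriate power of $\mu\abs z$ (resp.\ $(\mu\abs z)^{-1}$). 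Here the normalisation $\abs{a_{n_i^*}(q_i)}=1$ together with $\abs{a_j(q_i)}\leqs\si$ for $j<n_i^*$ (from $C^k$--closeness of $q_i$ to $q_i^*$) are precisely what make this possible. Once this is in place the remaining computations are routine, and one checks along the way that all five bounds depend only on the symmetric quantities $\bar\nu$ and $\mu$, consistently with the rescaling invariance of Remark~\ref{rmk:symm}.
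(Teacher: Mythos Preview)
Your plan is correct and matches the paper's approach: both rest on the refined expansions \eqref{est:rmbm-expansion}, \eqref{est:tilde-om-expansion} followed by the substitution $t=\mu\abs{z}$, treat $I_4$ via the crude bounds \eqref{est:base-simple}--\eqref{est:bubble-simple}, and evaluate $I_{\partial\Om}$ pointwise on $\partial\DD_{\mu^{-1/2}}$. The only difference is organisational: the paper expands each integrand directly as a finite double sum over the Taylor indices $j\leq n_0^*$, $k\leq n_1^*$ and computes the resulting one-dimensional integrals term by term, so your dyadic decomposition and the case analysis of where $\rho_q$ switches dominance are unnecessary---one simply bounds $\rho_q^2\leq 2\abs{\na\hat q_0}^2+2\abs{\na\hat q_{1,\mu}}^2$ and handles each piece.
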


This lemma ensures that $R_1$ defined by \eqref{est:main-error} is bounded by the quantity $R_0$ defined in \eqref{est:en-exp-nice-error} and hence that $\err_1$ can be included in the error term in Lemma \ref{lemma:general-energy-expansion}. 

Similarly the contributions of $\zz-\zz_1$ to $\peps E(\zz_\eps)$ are of lower order, namely

\begin{lemma}
\label{lemma:errphi}
For any variation in $\ZZ_0$ as considered above we can bound 
\beqa \label{claim:errcut}
\babs{  \int_{\Om_1} \peps \zz \Delta \zz-\peps \zz_1\Delta \zz_1 }
\leq \Ctilt \nutot(\mu^\mhalf+\eta_0 )+\de \mu^{-1}+\TT\nutot\mu^{-1}(\norm{\beta}+\norm{\peps \beta}).
\eeqa
\end{lemma}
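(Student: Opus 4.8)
The plan is to localise the integral to the central annulus, exploit that there $\zz$ and $\zz_1$ differ only by a second-order Taylor remainder of $\beta=U_0-U_1$, and then carry out a careful termwise estimate using the refined pointwise and $L^2$ bounds for the rational maps from \S\ref{subsec:technical}. Since $\varphi_{\hat r}\equiv0$ on $\DD_{\hat r/2}$ we have $\zz\equiv\zz_1=\pi_N(v_1)$ there, and the same persists after differentiating in $\eps$, so $\peps\zz\,\Delta\zz-\peps\zz_1\,\Delta\zz_1$ is supported on $B:=\DD_{\hat r}\setminus\DD_{\hat r/2}$, which in the normalised coordinates \eqref{eq:nice-radii} has $|B|\leqs\hat r^2=\mu^{-1}$ and satisfies $B\cap A^*=\emptyset$. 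Thus $\Delta\gamma=0$ on $B$; and since $q_0$ is holomorphic near $z=0$ and $q_1(\tfrac1{\mu\,\cdot})$ is holomorphic on $B$, also $\Delta q_\mu=0$ and $\Delta\modbb=d\beta(0)(\Delta q_0)=0$, so that $\Delta v_1=\Delta u_1$ on $B$ with $|\Delta u_1|\leqs|\na q_\mu|^2$. The crucial inputs, obtained from the refined expansions of $q_{0,1}$ around $z=0$ in \S\ref{subsec:technical} (the same argument that gives \eqref{est:I1}), are the gains over the crude bounds \eqref{est:base-simple}--\eqref{est:bubble-simple}:
\[
\|q_\mu\|_{L^\infty(B)}\leqs\nutot^{1/2},\qquad \int_B|\na q_\mu|^2\leqs\nutot,\qquad \|\na q_\mu\|_{L^\infty(\partial\Om_1)}\leqs(\mu\nutot)^{1/2},
\]
together with the elementary fact $\nutot\leqs\mu^{-1}$, so that $\nutot^{1/2}\leqs\mu^{-1/2}$.

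Next I would record the second-order structure. From \eqref{def:v1}, \eqref{def:v2}, the identities $\tilde\gamma_{U_0}-\tilde\gamma_{U_1}=U_1(0)-U_0(0)$ and $j_0-j_1=-d\beta(0)(q_\mu)$, and $q_\mu=q_0+q_1(\tfrac1{\mu\,\cdot})$, one computes directly that
\[
v_0-v_1=\beta(q_\mu)-\beta(0)-d\beta(0)(q_\mu),\qquad \beta:=U_0-U_1,
\]
a genuine second-order Taylor remainder of $\beta$ at $z=0$. Hence on $B$, using $\|q_\mu\|_{L^\infty(B)}\leqs\nutot^{1/2}$ and $\Delta q_\mu=0$,
\[
|v_0-v_1|\leqs\Ctilt\nutot,\quad|\na(v_0-v_1)|\leqs\Ctilt\nutot^{1/2}|\na q_\mu|,\quad|\Delta(v_0-v_1)|\leqs\Ctilt|\na q_\mu|^2,
\]
and the $\eps$-variations obey the same bounds with $\Ctilt$ replaced by $\Ctilt+\norm{\peps\beta}$ and with extra terms carrying the factors $\etazero$ (from $\peps$ hitting the attaching point $U_i(0)$) and $\etatilt\leqs\norm{\peps\beta}$, as in \eqref{est:var-maps-rat}. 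Setting $g:=\varphi_{\hat r}(v_0-v_1)$ and $f:=\zz-\zz_1=\pi_N(v_1+g)-\pi_N(v_1)$, and applying $\|q_\mu\|_{L^\infty(B)}\leqs\nutot^{1/2}$ twice in the term $\na\varphi_{\hat r}\cdot(v_0-v_1)$ (where $|\na\varphi_{\hat r}|\leqs\hat r^{-1}=\mu^{1/2}$), smoothness of $\pi_N$ then yields on $B$
\[
|f|\leqs\Ctilt\nutot,\quad|\na f|\leqs\Ctilt\big(\mu^{1/2}\nutot+\nutot^{1/2}|\na q_\mu|\big),\quad|\Delta f|\leqs\Ctilt\big(|\na q_\mu|^2+\mu\nutot\big),
\]
together with the analogous bounds for $\peps f,\na\peps f$ and the bounds on $v_1,\na v_1,\Delta v_1$ on $B$, in which $|\na\gamma|\leqs\de\,c_\mu r^{-1}$ is the only genuinely large contribution.

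For the termwise estimate one splits $\peps\zz\,\Delta\zz-\peps\zz_1\,\Delta\zz_1=\peps\zz_1\,\Delta f+\peps f\,\Delta\zz_1+\peps f\,\Delta f$ on $B$ and integrates the first two pieces by parts over $B$: the boundary term on $\partial\DD_{\hat r/2}$ vanishes since $f,\na f$ and — because $\bar\varphi$ is flat on $[0,\tfrac12]$ — also $\peps f,\na\peps f$ vanish there, while the $\partial\Om_1$-boundary term is $\leqs\Ctilt\nutot(\etazero+\nutot^{1/2})+\de\mu^{-1}$ by the bounds above together with $\|\na q_\mu\|_{L^\infty(\partial\Om_1)}\leqs(\mu\nutot)^{1/2}$. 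The remaining volume integrals are controlled using $|B|\leqs\mu^{-1}$, $\int_B r^{-1}\leqs\hat r=\mu^{-1/2}$, $\int_B r^{-2}\leqs1$ and $\int_B|\na q_\mu|^2\leqs\nutot$; the terms that account for the stated bound are: $\Ctilt\|q_\mu\|_{L^\infty(B)}\int_B|\na q_\mu|^2\leqs\Ctilt\nutot^{3/2}\leqs\Ctilt\nutot\mu^{-1/2}$; the products of the $\mu^{1/2}\nutot$-part of $\na f$ (resp.\ $\na\peps f$) with the $\etazero c_\mu r^{-1}$- and $\de\,c_\mu r^{-1}$-parts of $\na\peps\zz_1$ (resp.\ $\na\zz_1$), giving $\Ctilt\nutot\etazero(\log\mu)^{-1}$ and $\Ctilt\nutot\de(\log\mu)^{-1}\leqs\de\mu^{-1}$ (here $\nutot\leqs\mu^{-1}$); and the contribution of the non-harmonicity of $U_i$ entering through $|\Delta u_1|\leqs|\na q_\mu|^2$ and its variation, which produces $\TT\nutot\mu^{-1}(\norm{\beta}+\norm{\peps\beta})$. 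Collecting these gives the claim.

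The hard part is not any single estimate but the bookkeeping: one has to track the three genuinely large quantities living on $B$ — $\na\varphi_{\hat r}\sim\mu^{1/2}$, $\Delta\varphi_{\hat r}\sim\mu$ and $\na\gamma\sim\de\,c_\mu r^{-1}$ — and check that each occurs only multiplied by a factor that is quadratically small in $q_\mu$ (so that, after the Taylor cancellation $v_0-v_1=\beta(q_\mu)-\beta(0)-d\beta(0)(q_\mu)$, the product survives $\int_B$ only at the claimed order) or is removed by an integration by parts; and one must genuinely use the refined branched-case bounds $\|q_\mu\|_{L^\infty(B)}\leqs\nutot^{1/2}$, $\int_B|\na q_\mu|^2\leqs\nutot$ in place of the crude $\mu^{-1/2},\mu^{-1}$, since only then does the final estimate carry the sharp $\nutot$-dependence required here and in Lemma~\ref{lemma:general-energy-expansion} rather than a mere power of $\mu^{-1}$.
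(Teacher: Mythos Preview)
Your localisation to $B=\DD_{\hat r}\setminus\DD_{\hat r/2}$ and the second-order Taylor identity $v_0-v_1=\beta(q_\mu)-\beta(0)-d\beta(0)(q_\mu)$ are correct, and the direct estimate of $\int_B\peps\zz_1\,\Delta\errcut$ via $|\peps\zz_1|\leqs\mu^{-1/2}+\eta_0+\de$ and $|\Delta\errcut|\leqs\Ctilt\mu\nutot$ already gives the first two pieces of the claimed bound without any integration by parts. The gap is in your treatment of $\int_B\peps\errcut\cdot\Delta\zz_1$. Whether you estimate this directly or pass to $\int_B\na\peps\errcut\cdot\na\zz_1$ by parts, the pairing of the $\|\peps\beta\|\nutot$-part of $|\peps\errcut|$ (coming from the Taylor remainder of $\peps\beta$) with the $\mu\nutot$-part of $|\Delta\zz_1|$ (coming from $A(u_1)(\na u_1,\na u_1)$) yields a contribution of order $\|\peps\beta\|\nutot^2\sim\|\peps\beta\|\nutot\mu^{-1}$ after integrating over $|B|\sim\mu^{-1}$. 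This is \emph{not} bounded by the stated right-hand side when $\|\beta\|=\de=\TT=0$ but $\|\peps\beta\|\neq0$, and integration by parts does nothing to remove it.

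The paper's proof handles exactly this term by a tangential/normal split with respect to $T_{\zz_1}N$, which you do not invoke. The point is that the large part $A(u_1)(\na u_1,\na u_1)$ of $\Delta\zz_1$ is normal at $u_1$ (hence at $\zz_1$ up to $O(|\zz_1-u_1|)$), so it only sees $P_{\zz_1}^\perp(\peps\errcut)$; but since $\peps\zz\in T_\zz N$ and $\peps\zz_1\in T_{\zz_1}N$ one has $P_{\zz_1}^\perp(\peps\errcut)=(P_{\zz_1}^\perp-P_\zz^\perp)\peps\zz$, which is bounded by $|\errcut|\,|\peps\zz|\leqs\Ctilt\nutot(\mu^{-1/2}+\eta_0+\de)$ with \emph{no bare $\|\peps\beta\|$ factor}. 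The tangential part $P_{u_1}(\Delta\zz_1)$, on the other hand, contains only $\tau(u_1)\leqs\TT\mu\nutot$ plus pieces already carrying $\|\beta\|$ or $\de$; pairing this with the full $|\peps\errcut|\leqs(\|\beta\|+\|\peps\beta\|)\nutot$ is precisely what produces the $\TT\nutot\mu^{-1}(\|\beta\|+\|\peps\beta\|)$ term. Your sentence about ``the contribution of the non-harmonicity of $U_i$'' points in the right direction but does not name this mechanism, and your bound $|\Delta u_1|\leqs|\na q_\mu|^2$ does not isolate $\tau(u_1)$ from the second-fundamental-form part, which is the whole issue.
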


Combining these three lemmas with their analogues on $\Om_0$, which follow by symmetry, we get 
\beqa
\label{est:hallo-0}
\peps E(\zz)&=-\int_{\Om_1} \peps \zz\Delta \zz-\int_{\Om_0} \peps \zz\Delta \zz\\
&=\int_{\Om_1}\modbb\peps \Delta \bb - \peps \bb \tau( \bb) +\int_{\Om_0}\modbm\peps \Delta \bm - \peps \bm \tau( \bm) 
-\int_{A}  \Delta \gamma\peps \gamma+\err_2
\eeqa
for an error term that satisfies the required bound $\abs{\err_2}\leqs R_0$. 

As 
 $\abs{\hat\gamma'}=\de$ we can use \eqref{est:norm-phi} to see that
\beq
\label{est:hallo-1}
-\int_A\peps \gamma \Delta \gamma =\tfrac12 \tfrac{d}{d\eps} \norm{\na \gamma}_{L^2}^2= \tfrac12 \tfrac{d}{d\eps} 
\big( 
\abs{\hat \gamma'}^2 \norm{\na \phimu}_{L^2}^2\big) = \pi \cmu \tfrac{d}{d\eps}\delta(U_0,U_1)^2 +\err_3 
\eeq
for an error term that is bounded by $\abs{\err_3}\leqs \tfrac{\de (\de+\etazero)}{(\log\mu)^{2}}\leqs R_0$. 

Finally, we  
can use that 
 \beqa
 \label{est:hallo-2}
 \text{deg}(q_1) 
 \tfrac{d}{d\eps}E(\hat U_1)&= 
\tfrac{d}{d\eps} E(\hat U_1\circ \rmbbhatmu)=-\int_{\R^2} \tau(\hat U_1\circ \rmbbhatmu) \cdot \peps (\hat U_1\circ \rmbbhatmu)\\
&=-\int_{\Om_1} \tau(u_1) \cdot \peps u_1+\err_3+\err_4.
\eeqa
where we use \eqref{est:bubble-simple} to bound
$$\abs{\err_3}\leqs \abs{\int_{\Om_0} \tau(\hat U_1\circ \rmbbhatmu) \cdot \peps (\hat U_1\circ \rmbbhatmu)}\leqs\TT\int_{\Om_0}\abs{\na \rmbbhatmu}^2\leqs \mu^{-1}\TT\leqs R_0
$$
and
 set $\err_4:= \int_{\Om_1}\peps \bb\tau(\bb)-\peps (\hat U_1\circ \rmbbhatmu)  \tau(\hat U_1\circ \rmbbhatmu)$. To bound this term we can use that 
 $\abs{\peps (\bb-\hat U_1\circ \rmbbhatmu)}\leqs \abs{\rmbmhat}+\abs{\peps\rmbmhat}$, while  \eqref{eq:trafo-tension} ensures that 
\beqas
\abs{\tau(\bb)-\tau(\bbu\circ \rmbbhatmu)}&=\abs{\tau(\hat U_1)\circ \rmmuhat\abs{\na \rmmuhat}^2-\tau(\hat U_1)\circ \rmbbhatmu \abs{\na \rmbbhatmu}^2}
\leqs \TT\cdot (\abs{\rmbmhat}\rho_q^2
+\abs{\na \rmbmhat}\rho_q).\eeqas
Combined with \eqref{est:base-simple} and \eqref{est:bubble-simple} this yields
\beqas
\abs{\err_4}\leqs 
\TT \int_{\Om_1}(\abs{\rmbmhat}+\abs{\peps \rmbmhat})\rho_q^2+\abs{\na \rmbmhat}\rho_q\leqs \mu^{-1}\log\mu \TT\leqs R_0.
\eeqas
This reduces the proof of 
Lemma \ref{lemma:general-energy-expansion} to the proofs of the three lemmas stated above, which we carry out in the remainder of this section.

\newcommand{\rhoz}{\rho_\zz}

To prove Lemma \ref{lemma:general-energy-expansion-proof}, and later on also Lemmas \ref{lemma:first-variation} and \ref{lemma:second-variation}, it is useful to expand 
\beq \label{eq:expansion-z-Om1}
\zz_1=\pi_N(v_1)
=\bb+P_{\bb}(\tilde \gamma_\bb+\modbb)+\errom=\bb+\tgabb +\modbb-P_{\bb}^\perp( \tgabb+\modbb)+\errom
 \eeq
 and note that the resulting error term can be written as 
 \beq
 \label{eq:err}
 \errom=\int_0^1 \big(d\pi_N(\bb+t(\tgabb+\modbb))-d\pi_N(\bb)\big)(\tgabb+\modbb) dt\eeq
and hence bounded by
\beqa
\label{est:err}
\abs{\errom}\leqs 
\abs{\modbb}^2+\abs{\tgabb}^2\leqs \sitilt^2\abs{\rmbmhat}^2+\de^2\phimu
\eeqa
since $d\pi_N(p)$ agrees with the projection $P_p$ onto the tangent space $T_pN$ whenever $p\in N$.

 \begin{proof}[Proof of Lemma \ref{lemma:general-energy-expansion-proof}]
Writing $\zz_1$ as in \eqref{eq:expansion-z-Om1} and using that $\Delta \modbb=0$ 
we find that 
 \beqa \label{eq:main-expansion-Om1}
 \int_{\Om_1}\peps \zzphi\Delta \zzphi 
 &=\int_{\Om_1}\peps \bb \,\Delta \bb+\peps(P_{\bb}\modbb)\,\Delta \bb  -\peps \bb\,\Delta(P_{\bb}^\perp\modbb) +\peps \gamma \,\Delta \gamma +T_1\\
 &= \int_{\Om_1}\peps \bb \,\tau(\bb)- P_\bb\tu \peps \Delta \bb -\peps \bb\,\Delta(P_{\bb}^\perp\modbb) +\peps \gamma \,\Delta \gamma +T_1+T_2
 \eeqa
 for $T_2:=  \int_{\Om_1}\peps(P_\bb \modbb \Delta \bb) =\int_{\Om_1}\peps(\modbb\tau(\bb))$ and
 \beqa \label{est:T1}
\abs{ T_1}&\leq \int_{\Om_1} \abs{\peps (P_{\bb}\tgabb) \,\Delta \bb}+\abs{\peps \errom}\, \abs{\Delta\bb}+\abs{\peps (\zzphi-\bb)\,\Delta(P_\bb^\perp\modbb)}
 \\& \qquad +\int_{\Om_1} \abs{\peps \zzphi\, \Delta \errom}+
 \abs{\peps \zzphi\Delta(P_\bb^\perp\tgabb)}+\abs{ \int_{\Om_1}\peps(\zzphi-\gamma)\Delta \gamma } \\
 &=: T_1^{(1)}+...+T_1^{(6)}.
 \eeqa
We can rewrite the third term in \eqref{eq:main-expansion-Om1} using integration by parts, which results in a boundary term of the form 
\beq
\label{def:B1}
B_1:=-\int_{\partial \Om_1} \pn (P_\bb^\perp\modbb)\cdot\peps \bb- P_\bb^\perp\modbb\cdot\pn\peps \bb dS,\eeq
to obtain the desired expression
 \beqa \label{eq:main-expansion-Om1-2}
 \int_{\Om_1}\peps \zzphi\Delta \zzphi =
\int_{\Om_1}\peps \bb \,\tau(\bb)- \int_{\Om_1} \modbb \peps \Delta \bb +\int_{\Om_1} \peps \gamma\Delta \gamma +T_1+T_2+B_1 .
 \eeqa
 This reduces the proof of the lemma to showing that all  
 error terms $T_1^{(i)}$, $T_2$ and $B_1$ obtained above are controlled by the quantity $R_1$ defined in \eqref{est:main-error}.

To bound $T_1^{(1)}$ we consider $A(p)$ as a bilinear form 
 $A(p):\R^N\times \R^N\to T_p^\perp N$ on the whole space which vanishes in  normal directions and write 
 \beqas
 \peps (P_\bb\tgabb ) A(\bb)(\na \bb,\na \bb)&=- P_\bb(\tgabb) \peps \big(A(\bb)(\na \bb,\na \bb)\big)\\
 &=- P_\bb(\tgabb)\cdot (\peps A(\bb))(\na \bb,\na\bb).\eeqas
 Combined with \eqref{eq:trafo-tension}, \eqref{est:gamma-tilde} and \eqref{est:var-gamma} we
can hence bound 
\beqas
T_1^{(1)} &\leqs \int \abs{\peps(P_\bb(\tgabb)}\abs{\tau(\bb)}+\int \abs{P_\bb(\tgabb)}\abs{(\peps A(\bb))(\na \bb,\na\bb)}\\
  &\leqs \big[(\de +\etazero)\TT +\de\big] \int_{A\cap \Om_1}(\phimu+c_\mu)\abs{\na \rmmuhat}^2\leqs (\etaz \TT+\de)(\log\mu)^{-1}\fmu^{-2},
  \eeqas  where we use \eqref{est:standard-est-annulus} and \eqref{est:I-phimut}  in the last step. 
 Hence $T_1^{(1)}\leqs R_1$ as required. 
     
Next we 
use that 
that the formula \eqref{eq:err} for $\err_{u_1}$, combined with  \eqref{est:var-gamma} and 
 \eqref{est:var-maps-rat},  gives
\beqa
\label{est:peps-err-1}
\abs{\peps \errom}&\leqs \abs{\modbb}^2+\abs{\tgabb}^2+(\abs{\modbb}+\abs{\tgabb})(\abs{\peps\modbb}+\abs{\peps \tgabb})\\
&\leqs \sitilt^2\abs{\rmbmhat}^2+\de^2\phimu+(\sitilt \abs{\rmbmhat}+\de\phimu)(\sitilt\abs{\peps \rmbmhat}+\etatilt\abs{\rmbmhat}+\etaz\phimu+c_\mu\de\chiA).
\eeqa
Using additionally 
\eqref{est:ui-tilde}, \eqref{est:standard-est-annulus} and \eqref{est:I-phimut} we hence see that
\beqas
T_1^{(2)}\leqs& \sitilt^2(I_1+I_2)+ \sitilt\etatilt I_1 + 
(\log\mu)^{-1}\fmu^{-3} \de +\eta_0 \abs{d\beta(0)} I_3
\\&+\eta_0\de (\log\mu)^{-2}\fmu^{-2}+\de^2 (\log\mu)^{-1}\fmu^{-2}
\eeqas
 and thus that $T_1^{(2)}\leqs R_1$. 
 
We then split
\beqs
T_1^{(3)}
 \leq \int_{\Om_1}\abs{P_\bb^\perp(\peps(\zzphi-\bb))}\abs{\Delta P_\bb^\perp\modbb}+\int_{\Om_1} \abs{\peps (\zzphi-\bb)}\abs{P_\bb(\Delta P_\bb^\perp\modbb)}.
\eeqs
As $\abs{\Delta \bb}+\abs{\na \bb}^2\leqs \abs{\na \rmmuhat}^2\leqs \rho_q^2$ and as $\Delta j_1=0$ we can certainly estimate
\beqas 
\abs{\Delta(P_{\bb}^\perp\tu)}&\leqs \abs{\tu}(\abs{\na \bb}^2+\abs{\Delta \bb})+\abs{\na \modbb}\abs{\na \bb}\leqs \sitilt (\abs{\rmbmhat}\rho_q+
\abs{\na \rmbmhat}) \rho_q.
\eeqas 
To obtain an improved bound on the tangential part of this quantity we write 
 $P_{\bb}^\perp\tu =\sum_j 
\langle \tu ,\nu_\bb^j\rangle \nu_\bb^j$ to see that  $
\abs{P_{\bb}(\Delta(P_{\bb}^\perp \tu)} \leqs \abs{\tu}\rho_q^2+\rho_q \abs{P_{u_1}^\perp( \na \tu)},
$ and write
\beq
\label{eq:writing-om}
\modbb=d\bmu(0)(\rmbm)-d\bbu(0)(\rmbm)=P_{\bbu(0)}( \modbb)+(P_{\bmu(0)}-P_{\bbu(0)})(d\bmu(0)(\rmbm))
\eeq
to see that
\beqa
\label{est:Pu1-j}
\abs{P_{\bb}^\perp(\na \modbb)}&\leqs \abs{u_1-U_1(0)}\abs{\na \modbb}+\abs{\bmu(0)-\bbu(0)}\abs{\na \rmbmhat}
\leqs \sitilt(\abs{\rmbmhat}+\abs{\rmbbhatmu})\abs{\na \rmbmhat}+\de \abs{\na \rmbmhat}.
\eeqa
Combined we hence get
\beqas
\abs{P_{\bb}(\Delta(P_{\bb}^\perp \tu))}&\leqs\sitilt \abs{\rmbmhat}\rho_q^2 
+\sitilt \abs{\rmbbhatmu}\abs{\na \rmbmhat} \rho_q+\de \abs{\na \rmbmhat}\rho_q.
\eeqas 
On the other hand, while
we can only bound 
\beqas
\abs{\peps(\zzphi-\bb)}&=\abs{\peps(P_{\bb}(\tgabb+\modbb)+\errom)}
\leqs \abs{\modbb}+\abs{\peps \modbb}+\abs{\tgabb}+\abs{\peps \tgabb}\\
& \leqs (\sitilt+\etatilt)\abs{\rmbmhat}+\sitilt \abs{\peps \rmbmhat}+ \de(\phimu+c_\mu \mathbbm{1}_A)+\eta_0\phimu,
\eeqas
compare \eqref{est:peps-err-1} and \eqref{est:var-gamma}, 
we get a stronger bound of
\beqas
\abs{P_\bb^\perp(\peps(\zz_1-\bb)}=\abs{(P_\bb^\perp-P_\zzphi^\perp)(\peps \zzphi)}\leq \abs{\bb-\zzphi}\abs{\peps \zzphi}\leqs \abs{\tu}+\abs{\tgabb}\leqs  \sitilt\abs{\rmbmhat}+\de \phimu.
\eeqas
Combined we hence get  
\beqas
T_1^{(3)}&
\leqs
\int_{\Om_1}\sitilt\abs{\rmbmhat}
\abs{\Delta P_\bb^\perp\tu}+\de \int_{A\cap \Om1}
(\phimu+c_\mu)\abs{\Delta P_\bb^\perp\tu}\\
&+\int_{\Om_1} (\etatilt\abs{\rmbmhat}+\sitilt \abs{\peps \rmbmhat} )
\abs{P_\bb(\Delta P_\bb^\perp\tu)}+\etaz \int_{A \cap \Om_1}\phimu \abs{P_\bb(\Delta P_\bb^\perp\tu)}
\\
&\leqs  \sitilt^2 (I_1+I_2)+
\sitilt\etatilt I_1 +\de (\etatilt +\sitilt+\etaz)\mu^\mhalf 
+\etaz\sitilt I_3
\eeqas
and thus $T_1^{(3)}\leqs R_1$. 
Here we simply use that $\norm{\rmbmhat}_{L^\infty(\Om_1)}+\norm{\na \rmbmhat}_{L^2(\Om_1)}\leqs \mu^\mhalf$ to deal with the terms involving $\de$ as these only give lower order contributions.

Next we estimate 
\beqas
T_1^{(4)}&\leq 
\thalf\int_{\Om_1} \abs{P_{\zz_1} (d^2\pi_N(\bb)(\na j_1,\na j_1))}+
\int_{\Om_1}\abs{\Delta\errom-\thalf d^2\pi_N(\bb)(\na j_1,\na j_1)}.
\eeqas
To deal with the first term we exploit that 
 $A(x)=-dP_x$ and thus that
\beq 
\label{eq:d2pi-A} 
d^2\pi(x)(v,w)=- A(x)(v,w)\in T_x^\perp N \text{ for all } x\in N, v,w\in T_xN.
\eeq
Combined with  \eqref{est:Pu1-j} this allows us to bound 
$$\abs{P_{\zz_1}(d^2\pi_N(\bb)(\na j_1,\na j_1))}\leqs \abs{\zzphi-\bb}\abs{\na \modbb}^2+\abs{P_{\bb}^\perp \na \modbb}\abs{\na \modbb}\leqs \abs{d\beta(0)}^2 (\abs{\hat q_0}+\abs{\hat q_{1,\mu}} + \abs{\tgabb}))\abs{\na \hat q_0}^2$$
while the formula \eqref{eq:err} for $\err_{u_1}$ allows us to check that 
\beqas
\abs{\Delta\errom-\half d^2\pi_N (\bb)(\na \modbb,\na\modbb)}&\leqs 
(\abs{\modbb}+\abs{\tgabb})
\big[\abs{\na \bb}(\abs{\na \modbb}+\abs{\na \gamma})+\abs{\na \modbb}^2 +\abs{\na\gamma}^2 +\abs{\tau(\gamma)}\big]\\
&\quad +(\abs{\modbb}^2+\abs{\tgabb}^2)\rho_q^2+\abs{\na\gamma}^2+\abs{\na \gamma} \abs{\na \modbb}.
\eeqas
Combined 
we hence get
\beqas
T_1^{(4)}
&\leqs \sitilt^2 (I_1+I_2)+\de\sitilt \mu^\mhalf
 +\de^2(\norm{\phimu^2\rho_q^2}_{L^1(\Om_1)}+\norm{\na \phi_\mu}_{L^2}^2)+
 (\norm{j_1}_{L^\infty(\Om_1)} + \de) \norm{\tau(\gamma)}_{L^1}\\
 &\leqs 
  \sitilt^2 (I_1+I_2)+\de\sitilt \mu^\mhalf +\de^2(\log\mu)^{-1}
\eeqas
where we also use \eqref{est:standard-est-annulus}, \eqref{est:I-phimut}, \eqref{est:phi} and \eqref{est:norm-tension-gamma} in the second step. Thus also $T_1^{(4)}\leqs R_1$.

Next we estimate 
\beqas 
T_1^{(5)}&\leqs \int_{A\cap \Om_1}\abs{P_\zzphi(\Delta (P_\bb^\perp \tgabb))}\\
&\leqs \int_{A\cap \Om_1} \abs{u_1-\zz_1} (\abs{\Delta \gamma}+ \abs{\na \gamma}\rho_q) +
 \abs{P_{u_1}^\perp(\na\gamma)}\rho_q+\abs{\tgabb}\rho_q^2\\
 &\leqs (\mu^\mhalf+\de) \de (\log\mu)^{-1} +\int_{A\cap \Om_1}\abs{u_1-\gamma} \abs{\na\gamma}\rho_q+\de(\log\mu)^{-1} \fmu^{-2}
\eeqas
where we used \eqref{est:norm-tension-gamma}, $\abs{u_1-\zz_1}\leq \mu^\mhalf+\de$ and 
\eqref{est:I-phimut} in the last step.
Using furthermore that $\abs{u_1-\gamma}\leqs \abs{u_1-U_1(0)}+\abs{\tgabb}\leq \fmu^{-1} +\de \phimu$ on $A$ 
we hence get 
\beqa 
T_1^{(5)}
&\leqs \mu^\mhalf\de +\de^2(\log\mu)^{-1} + \de(\log\mu)^{-1} \fmu^{-2}\leqs R_1.
\eeqa
Finally, to bound $T_1^{(6)}$ we  write 
$\zzphi$ instead  as 
\beqas
\label{exp:z-by-gamma}
\zzphi=\gamma+P_{\gamma}(\tilde u_1+\modbb)+\errga \text{ for } \errga=\int_0^1 \big(d\pi_N(\gamma+t(\modbb+\tilde \bb)\big)-d\pi_N(\gamma))(\tilde \bb+\modbb)
\eeqas
and
use \eqref{est:standard-est-annulus} and \eqref{est:var-gamma}
to see that 
$$\abs{\peps(\zz_1-\gamma)}\leqs \abs{\peps \gamma} \fmu^{-1}+\fmu^{-2}\leqs (\eta_0+\de(\log\mu)^{-1})\fmu^{-1}+\fmu^{-2} \text{ on } A.$$ 
As $\norm{\Delta \gamma}_{L^1}\leqs \de(\log\mu)^{-1}$ we hence immediately deduce that  
\beqas
T_1^{(6)}&\leq \de (\log\mu)^{-1} \fmu^{-2}+\de^2(\log\mu)^{-2}\fmu^{-1}+\eta_0\de (\log\mu)^{-1}\fmu^{-1}\leqs R_1.
\eeqas
Since  $T_2=\int_{\Om_1}\peps (\modbb \tau(\bbuhat)\circ \rmmuhat \abs{\na \rmmuhat}^2)$, compare \eqref{eq:trafo-tension}, we furthermore have 
\beqas 
\abs{T_2}&
\leqs \TT \int\abs{\peps \modbb}\rho_q^2+\abs{\modbb}(\abs{\peps \rmmuhat}\rho_q^2+\rho_q\abs{\peps\na \rmmuhat} )
+\eta_\TT
\int\abs{\modbb} \abs{\na \rmbbhatmu}^2\\
&\leqs \TT\sitilt (I_3+I_4) +\TT\abs{\peps d\beta(0)} I_3+
\eta_\TT\sitilt  I_3\leqs R_1.
\eeqas
Finally, to estimate $B_1$ we use 
\eqref{eq:writing-om} to see that
\beqa
\label{est:Pperp-u}
\abs{P_{\bb}^\perp\modbb}&
\leqs \abs{\rmmuhat}\abs{\modbb}+\de \abs{\rmbmhat}\leqs \sitilt \abs{\rmmuhat}\abs{\rmbmhat}+\de \abs{\rmbmhat}. 
\eeqa 
Since 
$\abs{\na u_1}+\abs{\peps \na u_1}\leqs 1$ on  $\partial \Om_1$ and hence 
$\abs{P_\bb (\pn P_\bb^\perp\modbb)}\leqs \abs{\pn \bb}\abs{P_\bb^\perp \modbb}\leqs \abs{P_\bb^\perp \modbb}$
we can hence bound 
\beqa
\abs{B_1}\leqs \int_{\partial \Om_1}\abs{P_\bb^\perp\modbb}dS
\leqs \abs{d\beta(0)}I_{\partial \Om}+\mu^{-1}\de,
\eeqa
where the last step follows since 
$\abs{\rmbmhat}$ and $\abs{\partial \Om_1}$ are of order $O(\mu^{\mhalf})$. 

We have thus shown that all error terms are controlled by $R_1$ which completes the proof of Lemma \ref{lemma:general-energy-expansion-proof}.
 \end{proof}

\begin{proof}[Proof of Lemma \ref{lemma:errors-energy-expansion}]
To prove this lemma we will use that 
\beq \label{est:rmbm-expansion}
\abs{\hat q_0(z)}+\abs{z}\abs{\na \hat q_0(z)}\leqs  \sum_{j=1}^{\dps} \abs{a_j(q_0)} \abs{z}^j  \text{ for every } z\in \C
\eeq
and for $q_0 \in \RR^\si(q_0^*)$, $\hat q_0=\pi_{p^*}^{-1}\circ q_0$.
To see this we note that our choice of $\si$ ensures that the right hand side is at least $\half \abs{z}^{n_0^*}$ and that \eqref{est:rmbm-expansion} is  hence trivially satisfied outside of any fixed size disc $\DD_{\bar r}(0)$ since the maps $\hat q_0$ satisfy uniform $C^k$ bounds. 
As $q_0^*(0)=0$ and as the maps $\hat q_0$ are uniformly close to $\hat q_0^*$ we can fix $\bar r=\bar r(q_0^*)>0$ small enough so that $\abs{q_0(z)}\leq 1$ for all $\abs{z}\leq 2\bar r$. Standard estimates from complex analysis then ensure that $\abs{a_j(q_0)}\leqs (2\bar r)^{-j}$ for all $j$ and hence that 
$\sum_{j\geq n_0^*+1} j \abs{a_j(q_0)} \abs{z}^j\leqs \abs{z}^{n_0^*}$ for all 
 $\abs{z}\leq \bar r$ which ensures that \eqref{est:rmbm-expansion} also holds on $\DD_{\bar r}(0)$. 

Similarly, given $q_1\in \RR^\si(q_i^*)$ and $\mu> \bar \mu$ we can use that $\hat q_{1,\mu}=\pi_{p^*}\circ q_1(\tfrac{1}{\mu\cdot})$ is so that  
\beq
\label{est:tilde-om-expansion}
\abs{\rmbbhatmu(z)}\leqs \sum_{k=1}^{\dqs} \tfrac{\abs{\akbb}}{1+\abs{\mu z}^k} \text{ and } \abs{\na \rmbbhatmu(z)}\leqs \mu \sum_{k=1}^{\dqs} \tfrac{\abs{\akbb}}{1+\abs{\mu z}^{k+1}}. 
\eeq
The proof of all claims made in the lemma now follow from short explicit calculation 
based these two estimates \eqref{est:rmbm-expansion} and \eqref{est:tilde-om-expansion}. To begin with we note that
\beqa
\label{est:int-quadratic-u-om}
\int_{\Om_1} \abs{\rmbmhat}^2 \abs{\na \rmbbhatmu}^2+\abs{\rmbbhatmu}^2\abs{\na \rmbmhat}^2 
&\leqs 
\sum \abs{\ajbm}^2\abs{\akbb}^2 \mu^{-2j}\mu^2 \int_{\Om_1}
 \tfrac{ \abs{\mu z}^{2j}}{1+\abs{\mu z}^{2k+2}} +  \tfrac{ \abs{\mu z}^{2j-2}}{1+\abs{\mu z}^{2k}}
 \\
 &\leqs \sum  \abs{\ajbm}^2\abs{\akbb}^2\mu^{-2j}\int_0^{\sqrt{\mu}} 
 \tfrac{t^{2j-1}}{1+t^{2k}} dt \\
 &\leqs \sum  \abs{\ajbm}^2\abs{\akbb}^2
 (\mu^{-2j}+\mu^{-j}\mu^{-k}\log\mu)\leq \nutot^{2}\log\mu
 \eeqa
where here and in the following we only need to sum over $j=1,\ldots, \dps$ and $k=1,\ldots,\dqs$ and can use that the corresponding coefficients $a_j(q_i)$ are uniformly bounded as we only consider elements of $\RR^\si(q_i^*)$. 
As we furthermore have 
\beq
\label{est:int-bm-squared}
\int_{\Om_1}\abs{\hat q_0}^{2\alpha} \abs{\na\rmbmhat}^2\leqs \sum_j \abs{\ajbm}^{2(1+\alpha)}
\int_{\Om_1}\abs{z}^{2(1+\alpha)j-2} \leqs \nutot^{1+\alpha}
\text{ for any } \alpha\geq 0,
\eeq
so in particular for $\al=1$, we obtain the claimed estimate \eqref{est:I1} on $I_1$. 

Using \eqref{est:rmbm-expansion}, \eqref{est:tilde-om-expansion} and \eqref{est:base-simple} 
 we then get
\beqas
\int_{\Om_1}\abs{\rmbmhat}\abs{\peps \rmbmhat}\abs{\na \rmbbhatmu}^2+\abs{\peps \hat q_0}\abs{\hat q_{1,\mu}}\abs{\na \rmbbhatmu}\abs{\na q_{1,\mu}}
&\leqs \sum \abs{\ajbm}\abs{\akbb}^2 \mu^{-j-1}\int_0^{\sqrt{\mu}}\tfrac{t^{j+1}}{1+t^{2k+1}} dt
\\
&\leqs \nutot\mu^{-1}\log\mu.
\eeqas
As the other contributions to $I_2$ are bounded by 
\beqas
\int_{\Om_1}\abs{\rmbbhatmu}\abs{\na \rmbmhat}^2\leqs \sum \abs{\akbb}\abs{\ajbm}^2\mu^{-2j}\int_0^{\sqrt{\mu}}\tfrac{t^{2j-1}}{1+t^k}\leqs \nutot^{3/2}\leqs \mu^\mhalf \nutot
\eeqas
and 
$$\int \abs{\hat q_0} \rho_q\abs{\na \hat q_0}\leq (\nutot^2\log\mu)^\half \nutot^\half\leqs \mu^{-1}(\log\mu)^\half \nutot,$$
compare \eqref{est:int-quadratic-u-om} and \eqref{est:int-bm-squared}, we hence obtain the claimed bound \eqref{est:I2} on $I_2$. 

Combining \eqref{est:int-bm-squared}
with 
\beqas
\int_{\Om_1}\abs{\rmbmhat}\abs{\na\rmbbhatmu}^2+\abs{\na\rmbmhat} \abs{\rmbbhatmu} \abs{\na\rmbbhatmu} &\leqs \sum \abs{\ajbm}\abs{\akbb}^2\mu^{-j}\int_{0}^{ \sqrt{\mu}} \tfrac{t^{j+1}}{1+t^{2k+2}}\\
&\leqs \sum \abs{\ajbm}\abs{\akbb}^2(\mu^{-j}+\mu^{-j/2}\mu^{-k}\log\mu)\leqs \nutot
\eeqas
then shows that $I_3$ satisfies \eqref{est:I3}, while 
\eqref{est:base-simple} and \eqref{est:bubble-simple} give the claimed bound  on
\beqas
I_4\leq  \int_{\Om_1} \abs{z} (1+\tfrac{\mu^2}{1+\abs{\mu z}^4})
\leqs\mu^{-\tfrac32}+ \mu^{-1}\int_0^{\sqrt{\mu}} \tfrac{t^2}{1+t^4} dt\leqs \mu^{-1}.
\eeqas
Finally, as 
$\abs{\rmbmhat}^2+\abs{\rmbbhatmu}^2\leqs  \nutot$  on $\partial\Om_1=\partial \DD_{\mu^{-\half}}$, we get that 
$
\abs{I_{\partial\Om_1}}\leq  \abs{\partial\Om_1} \nutot \leqs \nutot\mu^{-\half}
$.
\end{proof}

In Section \ref{sect:variations} we will also use that a very similar calculation gives
\beq
\label{est:int-for-later}
\int \abs{\peps\hat q_0}^2\rho_q^2 +\abs{\hat q_0}^2 (\abs{\peps\na \hat q_0}^2+\abs{\peps \na\hat q_{1,\mu}}^2) \leqs \eta_{rat} (\mu^{-j^*}+\nutot)^2\log\mu.
\eeq 
We finally need to prove that the contribution of the error terms $\errcut=\zz-\zz_1$ to $\peps E(\zz_\eps)$ is controlled as described in Lemma \ref{lemma:errphi}. 

As  $\errcut$ is supported on $\hat A$ we first collect some estimates that are valid on this set where $\abs{z}\sim \mu^\mhalf$ and where we can hence bound
\beq
\label{est:A-cut-q}
\abs{q_0}+\abs{q_{1,\mu}}+\mu^\mhalf (\abs{\na q_0}+\abs{\na q_{1,\mu}}) \leqs \nutot^\half 
\eeq
and 
\beq
\label{est:A-cut-var-q}
\abs{\peps q_0}+\abs{\peps q_{\mu,1}}+\mu^\mhalf 
\abs{\peps \na q_0}+\abs{\peps \na q_{\mu,1}}
\leqs \mu^\mhalf.
\eeq
Since 
 $v_0-v_1=\beta(\rmmu)-\beta(0)-d\beta(0)(\rmmu)$ 
we have 
\beq
\label{est:v0-v1}
\abs{v_0-v_1}+\mu^\mhalf \abs{\na (v_0-v_1)}+\mu^{-1} \abs{\Delta(v_0-v_1)}\leqs \norm{\beta} (\abs{q_\mu }^2+\mu^{-1}\abs{\na q_\mu}^2) \leq \norm{\beta}\nutot\eeq
as well as 
\beq
\label{est:v0-v1-var}
\abs{\peps(v_0-v_1)}+\mu^\mhalf \abs{\peps\na (v_0-v_1)}+\mu^{-1} \abs{\peps\Delta(v_0-v_1)} \leq (\norm{\beta}+\norm{\peps \beta})\nutot+\eta_{rat} \norm{\beta}\mu^{-j^*}.\eeq
We furthermore note that $\gamma$ is harmonic on $\Acut$ so we can bound 
\beq
\label{est:nav-middle} 
\abs{\na v_i}^2+\abs{\Delta v_i} \leqs \rho_q^2+\abs{\na\gamma}^2 \leqs \mu (
 \nutot+\tfrac{\de^2}{(\log\mu)^2}).
 \eeq

Since the error term is given by 
\beq
\label{eq:errphi}
\errcut=\pi_N( v_1+t\varphi(v_0-v_1))-\pi_N(v_1)=
\int_0^1 d\pi_N(v_1+t\varphi(v_0-v_1))(\varphi(v_0-v_1)) dt
\eeq
for a function $\varphi$ that vanishes for $\abs{z}\leq \half \mu^\mhalf$ and that satisfies $\abs{\na\varphi}^2+\abs{\Delta \varphi}\leqs\mu$  
we have
\beqa
\label{est:errcut}
\abs{\errcut}\leqs \Ctilt\nutot \qquad \text{ and } \qquad 
 \abs{\Delta  \errcut}\leqs\Ctilt \mu \nutot.
\eeqa 
As $\abs{\peps \mu_i}\leqs \mu_i$ we have
\beq
\label{est:var-phicut}
\abs{\peps(\varphi_\mu(z))}\leqs\mu^{-\half}\abs{\peps \tfrac{\mu_1}{\mu_0}}\abs{\phi'(\mu^\half \abs{z}})\abs{z}\leqs \mathbbm{1}_{\Acut} 
\eeq
allowing us to bound
\beqa
\label{est:errcut-eps-rough}
\abs{\peps \errcut}&\leqs \abs{v_0-v_1}+\abs{\peps(v_0-v_1)}
\leqs (\Ctilt +\norm{\peps \beta})\nutot+\eta_{rat} \norm{\beta} \mu^{-j^*}.
\eeqa 
We will later also use that the above estimates also allow us to bound
\beq
\label{est:errcut-peps-Delta}
\mu^{-1}\abs{\peps \Delta\errcut}
\leqs  (\Ctilt +\norm{\peps \beta})\nutot+\eta_{rat} \norm{\beta} \mu^{-j^*}.
\eeq

Based on these estimates we can now complete the

\begin{proof}[Proof of Lemma \ref{lemma:errphi}]
Since 
$\abs{\peps \zz_1}\leqs  \mu^\mhalf +\de+\etaz$ on $\Acut$ and since $\abs{\Acut}\leqs \mu^{-1}$ we can bound 
\beqas
\abs{\int_{\Om_1}\peps \zz\Delta \zz-\peps \zz_1\Delta \zz_1}&\leq \int_{\hat A\cap \Om_1} (\abs{\peps \zz_1}+\abs{\peps \errcut})\abs{\Delta\errcut}+ \int_{\hat A\cap \Om_1}\abs{\peps \errcut \Delta \zz_1} \\
&\leqs
 \Ctilt( \mu^\mhalf +\de+\etaz)\nutot +\int_{\hat A\cap \Om_1}\abs{\peps \errcut \cdot \Delta \zz_1}.
\eeqas
To bound this last term we split $\peps \errcut \Delta \zz_1$ into the 
contributions of the tangential and the normal parts. 
Since $P_{\zz_1}=d\pi(\zz_1)$ 
 and since $\abs{v_1-\zz_1}\leq \de+\norm{\beta} \nutot^\half$ and $\abs{\peps v_1}\leqs \eta_0+\mu^\mhalf$ on $\Acut$ 
we get 
\beqa 
\label{est:errcut-eps-perp}
\abs{P_{\zz_1}^\perp(\peps \errcut)} 
&\leqs  (\abs{v_1-\zz_1}+\abs{v_0-v_1})\abs{\peps(\varphi (v_0-v_1))}+\abs{\peps v_1}\abs{\varphi(v_0-v_1)}
\\
&\leq (\de+\norm{\beta}\nutot^\half)(\norm{\beta}+\norm{\peps \beta})\mu^{-1}
+(\eta_0+\mu^{\mhalf}) \nutot\norm{\beta} \\
&\leqs 
\norm{\beta}\mu^{-3/2}
+\de(\norm{\peps \beta}+\norm{\beta})\mu^{-1} +\eta_0 \norm{\beta} \nutot
\eeqa
and can thus in particular bound $\abs{P_{\zz_1}^\perp(\peps \errcut)} \leqs \mu^{-1}(\norm{\beta}+\de)$. 

Since \eqref{est:nav-middle} ensures that $\abs{\Delta \zz_1}=\abs{\Delta(\pi_N(v_1))}\leqs \mu(\nutot+\de^2)$ on $\Acut$ and since  $\abs{\Acut}\leqs \mu^{-1}$ 
this suffices to estimate 
\beqas
\int\abs{P_{\zz_1}^\perp \peps \errcut}\abs{\Delta \zz_1}&\leq 
(\nutot+\de^2) 
\mu^{-1}\big[\norm{\beta}+\de] 
\eeqas
and hence to see that this term
is bounded by the right hand side of \eqref{claim:errcut}.

From \eqref{eq:expansion-z-Om1} we furthermore get that 
\beqas
\abs{P_{u_1}(\Delta \zz_1)}&\leqs 
\abs{\tau(u_1)}+\abs{\na \gamma}^2+\abs{\na j_1}^2+\rho_q( \abs{\na\gamma}+\abs{\na j_1})\leq \TT\nutot \mu+\de^2\mu +\de\nutot^\half \mu +
 \norm{\beta}\nutot \mu.
 \eeqas
 As  \eqref{est:errcut-eps-rough} ensures that
 $\abs{\peps \errcut}\leqs (\norm{\beta}+\norm{\peps\beta})\mu^{-1}$ and as 
 $\abs{\Acut}\leqs \mu^{-1}$
 we immediately conclude that 
also   
$\int\abs{\peps \errcut}\abs{P_{u_1}\Delta \zz_1}$ is controlled by the right hand side of \eqref{claim:errcut}, which completes the proof of the lemma. 
\end{proof}

\subsection{Dominating terms in the energy expansions for variations of the rational maps}
\label{sec:main-term} $ $\\
In this section we explain how the general energy expansion proven in the previous section can be used to identify a variation 
for which 
 Lemma \ref{lemma:energy-expansion-rational} holds true.

By symmetry it suffices to prove the claim of the lemma for $i=1$, i.e.~to identify a variation of  $q_1$ so that 
\eqref{claim:energy-expansion-case1} holds for
the resulting family $\zz_\eps$  for which we keep $U_{0.1}$, $q_1$ and $\mu$ fixed. 

For such variations we know from Lemma \ref{lemma:general-energy-expansion}  that
\beq\label{est:ddE-rat-proof}
dE(\zz)(\peps \zz)=\int_{\Om_1}j_1\peps \Delta \bb +\int_{\Om_0}j_0\peps \Delta \bm  +\err_1
\eeq
now for an error that is bounded by $\abs{\err_1}\leqs R_2$ for 
\beq
\label{est:error-rational}
R_2:=
 \nutot 
 \Ctilt \mu^\mhalf
 +\tfrac{\de^2}{\log\mu}+ 
\tfrac{\de}{f_\mu^{2}\log\mu} 
 +\TT\mu^{-1}\log\mu
\eeq 
as $\eta_0=\eta_\tau=0$. As $\peps U_0=0$ and as $\abs{\peps \hat q_\mu}\leqs \abs{\peps q_{1,\mu}}$ is small on $\Om_0$ we can easily check that 
\beq
\abs{\int_{\Om_0}j_0\peps \Delta u_0}\leqs \nutot\norm{\beta} \mu^{-\half}\leq R_2.
\eeq
To prove the lemma it hence suffices to show that we can find a variation of  $q_1$ 
for which 
\beq
\label{eq:to-show-rational}
\int_{\Om_1}j_1 \peps\Delta u_1 \geq c_1\abs{d\beta(0)}^2 \nu_0-\err_2
\eeq
for an error term that is bounded by $\abs{\err_2}\leqs R_2$. 

To this end we will use that away from $z=0$ the functions $2(\mu z)^{-j}$ are well approximated by the first 
two components  $h_{j,\mu}(z):=\tfrac{2(\mu\bar z)^j}{1+\abs{\mu \bar z}^{2j}}$
of the harmonic map $\bar h_{j,\mu}(z):=\pi((\mu \bar z)^j):\Chat\to S^2$, where we continue to identify $(x,y)\in \R^2$ with $x+i y \in \C$ whenever convenient. 
We hence write 
$u_1(z)= U_1(q_0(z)+q_{1,\mu}(z))$ as 
\beqas
u_1(z)&=\bbu(0)+
 d\bbu(0)\bigg( \thalf\sum_{j=1}^{\dqs} \ajbb h_{j,\mu}
(z)+\rmbm(z)
\bigg) +f_1(z)+f_2(z)+f_3(z)
\eeqas
for 
\beqas
f_1(z)&:= \bb(z)-(\bbu(0)+d\bbu(0))(\rmmu)= \int_0^1 (d\bbu(t q_\mu)-d\bbu(0))( q_\mu) dt\\
f_2(z)&:= d\bbu(0))(\sum_{1\leq j\leq \dps} \ajbb ((\mu z)^{-j}-\thalf h_{j,\mu}
(z))= 
\thalf d\bbu(0) \big( \sum_{1\leq j\leq \dps}\ajbb
\tfrac{(\mu\bar z)^j}{\abs{\mu z}^{2j}(1+\abs{\mu z}^{2j})}\big)\\
f_3(z)&:=
d \bbu(0) \big( \sum_{j\geq \dps+1} \ajbb (\mu z)^{-j}\big).
\eeqas 
As $\Delta j_1=0$ we get that
 $T_{B}^i:=  \int_{\Om_1}j_1\Delta \peps f_i= 
 \int_{\partial \Om_1} \modbb \pn \peps f_i-\pn j_1 \peps f_i dS$ and will use this to show that all these terms are controlled by $R_2$.

We can use that 
$\abs{\partial \Om_1}\sim \mu^\mhalf$ and that the rational functions are bounded by \eqref{est:A-cut-q} and their variations by \eqref{est:A-cut-var-q} at points with $\abs{z}\sim\mu^\mhalf$. This immediately implies that 
\beqas
\abs{T_B^1}&\leqs \abs{d\beta(0)}\int_{\partial \Om_1}\abs{q_0}(\abs{\na q_\mu}\abs{\peps q_\mu}+\abs{q_\mu}\abs{\na\peps q_\mu})+\abs{\na q_0} \abs{\peps q_\mu}\abs{q_\mu} \leqs \abs{d\beta(0)} \mu^{-\half} \nutot\leqs R_2,
\eeqas
 while 
$\abs{T_B^3}\leqs \sitilt \mu^{-\half}\nu_{\rmbmhat}\leqs R_2$ follows since 
$\abs{\peps f_3}+\mu^\mhalf \abs{\peps \na f_3}\leqs \mu^{-(\dps+1)/2}\leqs \mu^\mhalf \nutot^\half$.

We can now use that terms of the form $d\beta(0)(a_j z^j)$, $a_j\in \C$, and their radial derivatives are $L^2(\partial \DD_r)$ orthogonal to terms of the form $dU_1(0)(b_k (\bar z)^k)$, $b_k\in \C$, and their radial derivatives on any circle $\partial \DD_r=\partial \DD_r(0)$ if $j\neq k$. 
This implies that 
\beqas
\abs{T_B^2}&\leqs \abs{d\beta(0)} \sum_{j=1}^{\dps}\abs{\ajbm} \abs{a_j(\peps q_1)} \int_{\partial \Om_1} \big[ \abs{z^j} 
\abs{\pn \tfrac{(\mu\bar z)^j}{\abs{\mu z}^{2j}(1+\abs{\mu z}^{2j})}}
+\abs{\pn z^j} \abs{\tfrac{(\mu\bar z)^j}{\abs{\mu z}^{2j}(1+\abs{\mu z}^{2j})}}\big] \\
&\leqs \sitilt \sum_{j\leq \dps} \abs{\ajbm} \abs{\peps \ajbb} \mu^{-2j} \leqs  \sitilt \mu^{-1}\nutot\leqs R_2.\eeqas 
At this stage we hence know that
\beqa\label{eq:E-E-case1-step1}
\int_{\Om_1}\modbb\peps\Delta \bb
&= \thalf 
\sum_{j=1}^{\dqs} \int_{\Om_1} \modbb \cdot  dU_1(0)(\peps \ajbb\Delta h_{j,\mu})+\err_3
\eeqa
for $\abs{\err_3}\leqs R_2$ since we know that $\Delta q_0=0$.

As
$\bar h_{j,\mu}$ is a harmonic map into $S^2$ 
and as 
$$\abs{\na \bar h_{j,\mu}}^2(z)=\abs{\na \pi((\mu \bar z)^j)}^2\cdot (j\mu)^2 \abs{\mu z}^{2(j-1)} =\mu^2 \tfrac{8j^2 \abs{\mu z}^{2j-2}}{(1+\abs{\mu z}^{2j})^2}$$
we can write 
\beqs 
\Delta h_{j,\mu}=- \abs{\na \bar h_{j,\mu}}^2 h_{j,\mu}=
-2\mu^2 H_j(\mu \abs{z})\cdot (\mu\bar z)^j
\eeqs
for  $H_j(t):= \tfrac{4 j^2t^{2j-2}}{(1+t^{2j})^3}$. 
The orthogonality of the different Fourier modes on circles hence allows us to write the main terms in \eqref{eq:E-E-case1-step1} as 
\beqa \label{est:main-term-rat}
\thalf 
\int_{\Om_1} \modbb \cdot  dU_1(0)(\peps \ajbb\Delta h_{j,\mu}) &=
-
 I_j^H(\mu)\int_{S^1}d\beta(0)(a_{j}(q_0)e^{ij\theta} )\cdot dU_1(0)(a_j(\peps q_1) e^{-ij\theta})\\
&= - \abs{a_{j}(q_0)} \abs{a_{1}(\peps q_1)} I_j^H(\mu)
I_j^\theta(\al_j, U_0,U_1)
\eeqa
for $\al_j=\al_j(\peps q_1,q_0):=j^{-1}\big[\Arg(a_{j}(\peps q_1))-\Arg(a_{j}(q_0))\big]$, 
\beq
\label{def:I-theta}
I_j^\theta(\al, U_0,U_1):= \int_{S^1}d(U_0-U_1)(0)(e^{ij\theta} )dU_1(0)( e^{-ij(\theta-\alpha)}) d\theta 
\eeq 
and
\beq
\label{eq:Ij-H} I_j^H(\mu):=\mu^2\int_0^{\mu^{-\half}} r^jH_j(\mu r) (\mu r)^j r dr
=4j^2\mu^{-j}\int_0^{\sqrt{\mu}}\tfrac{t^{4j-1}}{(1+t^{2j})^3} dt\sim \mu^{-j}.
\eeq
We note that the above calculation applies for any variation 
of $q_i$ that satisfies \eqref{ass:variations} and hence shows that $\peps E(\zz)$ is always controlled by \eqref{est:var-E-always-rational}.

To find a specific variation of $q_1$ for which  Lemma \ref{lemma:energy-expansion-rational} holds we now use the following lemma, a proof of which is included below.
\begin{lemma}\label{lemma:choosing-theta}
Let $U_{0,1}^*:\Chat\to N$ be distinct harmonic maps which satisfy Assumption \ref{ass:Ui}. 
Then there exists $\al^*\in [0,2\pi)$ and $c^*>0$ so that 
$-I_j^\theta(\al^*, U_0^*, U_1^*)>2c^* $ for all $j\in \N$ 
and hence so that 
$$I_j^\theta(\al^*, U_0, U_1)\geq c^* \text{ for all } U_i\in \RR^\si(U_i^*), \,j\in\N.$$
\end{lemma}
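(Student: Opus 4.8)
The plan is to make $I_j^\theta$ completely explicit in terms of the $1$-jets of $U_0^*,U_1^*$ at the origin and then extract the required sign from the geometry near $U_0^*(0)=U_1^*(0)$.

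First I would reduce the integral. Substituting $\psi=j\theta$ shows $I_j^\theta(\alpha,U_0,U_1)=I_1^\theta(j\alpha,U_0,U_1)$, and, as the angle $\alpha_j$ enters \eqref{est:main-term-rat} only through $j\alpha_j$ — a phase difference of Taylor coefficients of $q_1$ and $q_0$ that we are free to prescribe — it is enough to control $I_1^\theta$ for a suitable value of its argument. Writing $V_i:=2\partial U_i^*(0)=\partial_xU_i^*(0)-i\,\partial_yU_i^*(0)\in\C^N$ — a nonzero vector since $dU_i^*(0)\neq0$ — one has $d(U_0^*-U_1^*)(0)(e^{i\theta})=\Rea\!\big(e^{i\theta}(V_0-V_1)\big)$ and $dU_1^*(0)(e^{-i(\theta-\alpha)})=\Rea\!\big(e^{-i(\theta-\alpha)}V_1\big)$. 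Multiplying and integrating over $S^1$, the $e^{\pm2i\theta}$-modes are orthogonal to constants, so only one term survives and $I_1^\theta(\alpha,U_0^*,U_1^*)=\pi\,\Rea\!\big(e^{i\alpha}\,(V_0-V_1)\!\cdot\!V_1\big)$, the dot being the complex-bilinear product $\sum_k(\cdot)_k(\cdot)_k$. Since $U_i^*$ is a harmonic map from $S^2$ it is (weakly) conformal: its Hopf differential $\langle\partial U_i^*,\partial U_i^*\rangle\,dz^2$ is a holomorphic quadratic differential on $S^2$ and hence vanishes identically, so $V_1\!\cdot\!V_1=0$ and
$$
I_j^\theta(\alpha,U_0^*,U_1^*)=\pi\,\Rea\!\big(e^{ij\alpha}z_0\big),\qquad z_0:=V_0\!\cdot\!V_1\in\C .
$$
Granting $z_0\neq0$, one picks the phase $\alpha^*$ rotating $z_0$ onto the negative real axis, so that the relevant value is $-\pi|z_0|=:-2c^*<0$ for every $j$; and since $z_0$ depends continuously on the $1$-jets and $V_i\!\cdot\!V_i=O(\si)$ for $U_i\in\HH_1^\si(U_i^*)$ (being $C^k$-close to the conformal $U_i^*$), the inequality $-I_j^\theta(\alpha^*,U_0,U_1)\geq c^*$ persists on a small $\si$-neighbourhood. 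Thus everything reduces to $z_0\neq0$.

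If we are in case \textup{(i)} of Assumption~\ref{ass:Ui}, then, as $U_0^*\neq U_1^*$ by hypothesis, we must have $U_1^*(z)=U_0^*(\bar z)$, and the chain rule gives $\partial_z\!\big(U_0^*(\bar z)\big)\big|_0=\overline{\partial U_0^*(0)}$, i.e.\ $V_1=\overline{V_0}$. Hence $z_0=V_0\!\cdot\!\overline{V_0}=\sum_k|(V_0)_k|^2=|V_0|^2>0$, so $z_0\neq0$ (and here already $\alpha^*=\pi$ works). If instead we are in case \textup{(ii)}, then $N$ is $3$-dimensional and $T_i:=T_{U_i^*(0)}U_i^*(\hat\C)=\mathrm{span}\{\partial_xU_i^*(0),\partial_yU_i^*(0)\}$ are two distinct $2$-planes inside $T_{U_0^*(0)}N\cong\R^3$. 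Conformality of $U_i^*$ says precisely that $V_i$ is a nonzero \emph{isotropic} vector, so $[V_i]$ lies on the smooth conic $\Qu:=\{[V]\in\mathbb{CP}^2:\ V\!\cdot\!V=0\}$, and $T_i=\mathrm{span}\{\Rea V_i,\Ima V_i\}$; since the correspondence (oriented $2$-plane with unit normal $n$) $\longleftrightarrow$ $[V]$, with $n=\tfrac1{2i}V\times\overline V$, is a bijection onto $\Qu$, two distinct unoriented planes force $[V_0]\neq[V_1]$. Finally, for a smooth plane conic the locus $\{[V]:V_0\!\cdot\!V=0\}$ is the tangent line to $\Qu$ at $[V_0]$, which meets $\Qu$ only at $[V_0]$; hence $[V_1]\neq[V_0]$ gives $z_0=V_0\!\cdot\!V_1\neq0$, completing the argument.

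The substantive step is the last one: turning ``the tangent planes do not coincide'' into the algebraic non-degeneracy $V_0\!\cdot\!V_1\neq0$ via the polarity of the isotropic conic in $\mathbb{CP}^2$; everything else is bookkeeping of Fourier modes and of the phase $\alpha$ versus $j\alpha$. A minor point to keep in mind is that the immersion condition $dU_i^*(0)\neq0$ is exactly what makes the conformal picture genuine (not merely ``weakly conformal'') at the origin, so that $T_i$ is an honest $2$-plane and $[V_i]$ an honest point of $\Qu$.
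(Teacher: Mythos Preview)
Your argument is correct and takes a different, more conceptual route than the paper. Both reduce to showing that $I_1^\theta$ can be made strictly negative, but where the paper fixes adapted orthonormal frames in each case and computes the trigonometric integral by hand, you pass to the complex tangent vectors $V_i=2\partial U_i^*(0)$, use conformality to get $V_1\cdot V_1=0$, and reduce everything to the non-vanishing of $z_0=V_0\cdot V_1$. In case~(i) the two computations are essentially the same. In case~(ii) the paper argues by cases on the entries of the matrix $A=c_{U_0^*}^{-1}dU_0^*(0)$ to select $\alpha^*\in\{0,\pi,\tfrac{\pi}{2},\tfrac{3\pi}{2}\}$, whereas your polarity observation --- that on the smooth conic $\{V\cdot V=0\}\subset\mathbb{CP}^2$ the tangent line at $[V_0]$ meets the conic only at $[V_0]$, so distinct isotropic directions pair nontrivially --- dispenses with coordinates and case distinctions entirely and makes the role of the three-dimensionality of $N$ transparent.

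One caveat. Your identity $I_j^\theta(\alpha)=I_1^\theta(j\alpha)$ is correct for the exponent $e^{-ij(\theta-\alpha)}$ as literally written in~\eqref{def:I-theta}, but then a \emph{single} $\alpha^*$ cannot make $\Rea(e^{ij\alpha^*}z_0)$ negative for all $j$, so the lemma does not follow verbatim from your formula. You deal with this correctly by noting that only the value $j\alpha_j$ is needed in~\eqref{est:main-term-rat} and that it is freely prescribable, yet your concluding sentence ``the inequality $-I_j^\theta(\alpha^*,U_0,U_1)\geq c^*$ persists'' slips back to a fixed $\alpha^*$ and is inconsistent with your own computation. The paper's proof in fact treats $I_j^\theta(\alpha^*)$ as independent of $j$ (getting $\cos\alpha^*$, $\sin\alpha^*$ rather than $\cos(j\alpha^*)$, $\sin(j\alpha^*)$), which indicates the intended exponent in~\eqref{def:I-theta} is $e^{-i(j\theta-\alpha)}$; under that reading both your argument and the paper's go through as stated.
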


Given a rational map $q_0$ 
we will apply this lemma for 
$j_0\in \{1,\ldots, \dps\}$
chosen so that 
\beq
\label{def:choice-J-p}
\abs{\ajbm}\mu^{-j} \La_1^{j}\leq \abs{a_{j_0}(q_0)}\mu^{-j_0} \La_1^{\jrmbm} \text{ for } j=1,\ldots, \dps
\eeq
where  $\La_1\geq 1$ is a large number that only depends on $U_{0,1}^*$ and $q_{0,1}^*$ and that is fixed below.

Writing 
 $\rmbb$ in the form $q_1=\tfrac{r_1}{r_2}$ for polynomials $r_{1,2}$ which are normalised by $r_2(0)=1$, we finally define the desired variation of $q_1$ by 
\beqs
\rmbbeps(z):=
\frac{r_1(z)+\eps (e^{i\th^*} z)^{\jrmbm}}{r_2(z)} 
\eeqs
where we choose 
$\theta^*:=\al^*+\frac{1}{j_0}\Arg(a_{\jrmbm})$ so that $\al_{j_0}(\peps q_1,q_0)$ is given by the number 
 $\al^*$ from Lemma \ref{lemma:choosing-theta}.

As  $a_j(\peps q_1)=0$ for $j< j_0$ while $\abs{ a_{j_0}(\peps q_1)}=1$ 
we hence get from \eqref{eq:E-E-case1-step1} and 
\eqref{est:main-term-rat} 
that 
\beqas
\int_{\Om_1}\modbb\peps\Delta \bb
&\geq c^*\abs{a_{j_0}(q_0)}  I_{j_0}^H(\mu)-C  \sum_{j=\jrmbm+1}^{\dps}\abs{a_{j}(q_0)}I_{j}^H(\mu) -CR_2\\
&\geq 2c \abs{a_{j_0}(q_0)} \mu^{-j_0}-C  \sum_{j=\jrmbm+1}^{\dps}\abs{a_{j}(q_0)}\mu^{-j} -CR_2\\
&\geq (2c-C\La_1^{-1}) \abs{a_{j_0}(q_0)} \mu^{-j_0}-CR_2\geq c \abs{a_{j_0}(q_0)} \mu^{-j_0}-CR_2
\eeqas
for a constant $c>0$ that only depends on $U_{i}^*$ and $q_{0,1}^*$,
where the last step holds true provided  $\La_1=\La_1(U_{i}^*,q_{i}^*)$ in \eqref{def:choice-J-p} is chosen sufficiently large.  

Since  \eqref{def:choice-J-p} ensures that $\nu_{q_0}\leqs \abs{a_{j_0}(q_0)} \mu^{-j_0}$
 this completes the proof of 
 \eqref{eq:to-show-rational} and hence of 
 Lemma \ref{lemma:energy-expansion-rational} up to the 

\begin{proof}[Proof of Lemma \ref{lemma:choosing-theta}]
We use that harmonic spheres are weakly conformal and that $dU_i^*(0)\neq 0$.

If $U_1^*(z)=U_0^*(\bar z)$ we can hence 
choose 
coordinates on the target so that
$dU_{0,1}^*(0)$ are given
 by the matrices with columns $\tfrac{1}{\sqrt{2}}\abs{dU_0^*(0)}(1,0,0_{N-2})^T$ and $\tfrac{1}{\sqrt{2}}\abs{dU_0^*(0)}(0,\pm 1,0_{N-2})^T$ when viewed as maps from $\R^2$ to $\R^N$.
Setting $\al^*:=\pi$ hence gives 
$$-I_j^\theta(\al^*, U_0^*,U_1^*)=\thalf\abs{dU_0^*(0)}^2 \int_{S^1}\sin^2(j\theta)+\cos^2(j\theta)=\pi \abs{dU_0^*(0)}^2$$ which establishes the claim in this case. 

So suppose instead that $N$ is three dimensional and that $U_{0,1}^*$ are minimal spheres whose tangent spaces at $U_0^*(0)=U_1^*(0)$ intersect transversally.  
In this case we choose the coordinates on the domain so that 
$\partial_{x_1} U_1^*(0)$ is contained in the intersection of these tangent spaces 
and then use the conformality of $U_1^*$ to choose coordinates on the target so that $\partial_{x_{1,2}} U_1^*=c_{U_1^*}e_{1,2}$ and so that $T_{U_0^*(0)}N=\R^3\times \{0\}$. Here $e_{i}$ is the standard basis of $\R^N$ and $c_{U_1^*}:= \tfrac{1}{\sqrt{2}}\abs{dU_1^*(0)}$.

The matrix $A=c_{U_0^*}^{-1} d\bmu^*(0):\R^2\to \R^N$, whose columns are orthonormal, is hence so that 
 $A_{31}=0$, $A_{32}\neq 0$ and so that $A_{ij}=0$ for $i\geq 4$. 
As 
\beqas
\int dU_1^*(0)(e^{ij\theta}) dU_1^*(0)(e^{-ij(\theta-\al^*)})
&=\int \cos(j\theta)\cos(j\theta+\theta^*)-\sin(j\theta)\sin(j\theta+\al^*)
=0,\eeqas
for any $\al^*$  we hence get that 
\beqas
I_j^\theta(\al^*, U_0^*, U_1^*)
& =c_{U_1^*} c_{U_0^*}
\int [A_{11}\cos(j\theta) +A_{12}\sin(j\theta)]\cos (j\theta+\al^*)\\
&\quad -c_{U_1^*} c_{U_0^*}\int[A_{21}\cos(j\theta)+A_{22} \sin(j\theta)]\sin(j\theta+\al^*) d\theta\\
&=
\pi c_{U_1^*} c_{U_0^*}[ (A_{11}-A_{22})\cos\al^*-(A_{12}+A_{21})\sin\al^*].
\eeqas
If $A_{11}\neq A_{22}$ we can hence choose $\al^*$ to be either $0$ or $\pi$. Conversely, if $A_{11}=A_{22}$ then we cannot have that also $A_{12}=-A_{21}$ as both columns of $A$ have unit length and as 
$A_{31}=0$ while $A_{32}\neq 0$. In this second case we can thus choose $\al^*$ as either $\frac\pi2$ or $\tfrac{3\pi}2 $.   
\end{proof}

\subsection{Dominating terms in the energy expansion for variations of the underlying maps $U_i$}\label{sect:dom-other}
$ $\\
We now want to prove that we can always find variations of the maps $U_i$ in $\HH_1(U_i^*)$ so that the induced variation in $\ZZ_0$ is as described in Lemmas \ref{lemma:energy-expansion-rotation} respectively \ref{lemma:energy-expansion-tension}. 

For such a variation we can always bound $\abs{\peps \Delta u_1}\leqs \rho_ q^2$ and hence have 
$$\abs{\int_{\Om_1}j_1\peps \Delta u_1}\leqs \abs{d\beta(0)}\int_{\Om_1}\abs{\hat q_0} \rho_q^2\leqs  \abs{d\beta(0)}I_3\leqs \norm{\beta} \nutot,
$$
compare Lemma \ref{lemma:errors-energy-expansion}, while the analogue estimate for $\abs{\int_{\Om_0}j_0\peps \Delta u_0}$ follows by symmetry.

The general formula \eqref{eq:en-exp-nice} for the variation of the energy on $\ZZ_0$ hence tells us that 
\beqa 
dE(\zz)(\peps \zz)&=\pi c_{\mu} \tfrac{d}{d\eps}\de(U_{0},U_{1})^2
+ \deg(\rmbb) \tfrac{d}{d\eps}E(\bbu) + \deg(\rmbm) \tfrac{d}{d\eps} E(\bmu) + \err_4
\eeqa 
for $\abs{ \err_4}\leqs \norm{\beta} \nutot 
+\frac{\de}{\log\mu}[\de+\fmu^{-1}+(\log\mu)^{-1}]+\TT(\log\mu)^{-1}\fmu^{-1}$.  

As $c_\mu\sim (\log\mu)^{-1}$ we immediately get that Lemma \ref{lemma:energy-expansion-tension} holds true if we choose 
a variation 
 $U_{i,\eps}$ in $\HH_1^\si(U_i^*)$ for which \eqref{est:key-HH} holds. 

 On the other hand, if we consider 
variations of $U_i$ that are induced by translations on the domain then we know that $\ddeps E(U_i)=0$ as the energy is conformally invariant. To prove Lemma 
 \ref{lemma:energy-expansion-rotation}
 it hence suffices to prove 
Lemma \ref{rmk:de}, i.e. that there always exist variations 
 of the form $U_{i,\eps}=U_i(\cdot +\eps e^{i\theta_0})$ so that 
\beq\label{claim:dd-delta} 
\tfrac{d}{d\eps}\de^2(U_{0},U_{1})\geqs \de(U_{0},U_{1}).
\eeq
 If we are in a non-degenerate setting for which $U_{0}^*=U_1^*$ then we know that 
 $U_{i}(z)=U_0^*(z + c_{i})$ for some $\abs{c_i}\leq \si$.
If $c_0=c_1$ then $\de=0$ so there is nothing to prove, while for $c_0\neq c_1$ 
we can consider $U_{1,\eps}(z)=U_{0}^*(z+c_1+\eps \tfrac{c_0-c_1}{\abs{c_0-c_1}})= U_{1}(z+\eps \tfrac{c_0-c_1}{\abs{c_0-c_1}})$ and use that $dU_0^*(0)\neq 0$ to get a variation for which \eqref{claim:dd-delta} holds. The analogue argument also applies in settings where $U_0^*(z)=U_1^*(\bar z)$ and in both these cases we could just as well have constructed a variation of $U_0$.

So suppose that the maps $U_{0,1}^*$ are instead  as in part (ii) of Assumption \ref{ass:Ui}. In this case we can exploit that  the 
 tangent spaces $T_{U_i^*(0)}U_i^*(\Chat)$, which are 2 dimensional subspaces of the same 3 dimensional space $T_{U_i^*(0)}N$, intersect transversally
 and that the length  $\de$ of the geodesic $\hat\gamma_{U_1,U_0}$ that connects the points $U_i(0)$ can be made smaller than any given constant by reducing $\si$.
 
We hence obtain that  at least one of the angles 
$\al_i\in[0,\frac{\pi}{2}]$ at which $\hat\gamma$ intersects $T_{U_i(0)}U_i(\Chat)$ must be so that $\abs{\frac{\pi}{2}-\al_i}\geq \tfrac13 \bar\al$, where 
 $\bar \al\in (0,\frac{\pi}2]$ denotes
the 
angle between the spaces $T_{U_i^*(0)}U_i^*(\Chat)$.
For this $i$ we hence know that 
$\norm{P^{T_{U_i(0)}U_i(\Chat)}(\tfrac{\hat \gamma'(i^*)}{\norm{\hat \gamma'(i^*)}})}=\cos(\al_i)\geq \sin(\bar \al/3) $ 
is bounded away from zero, where we recall that $\hat\gamma(i^*)=U_i(0)$, $0^*=1, 1^*=0$. 

As $U_i^*$ is  weakly conformal and $dU_i^*(0)\neq 0$ and as the maps $U_i$ are $C^k$ close to $U_i^*$ we hence deduce that $U_i(\cdot +\eps e^{i\theta_0})$ has the desired property \eqref{claim:dd-delta} if $\theta_0$ is chosen so that 
$\peps U_i(0)( e^{i\theta_0})$ points in the direction of $ \pm P^{T_{U_i(0)}U_i(\Chat)}(\tfrac{\hat \gamma'(i^*)}{\norm{\hat \gamma'(i^*)}})$ . 

This completes the proof of Lemma \ref{rmk:de} and hence the proof of Lemma \ref{lemma:energy-expansion-rotation}.

\section{Estimates on the first and second variation of the energy}\label{sect:variations}
We conclude this paper with the proofs of the claims on the first and second variation of the energy at points $\zz\in\ZZ_0$ made in Lemmas \ref{lemma:positive-def-2ndvar}-\ref{lemma:second-variation}. We will continue to work in stereographic coordinates which are scaled so that $\abs{a_{n_0^*}(q_0)}=1$ and hence so that $\mu_0=1$, but we now need to 
 consider variations in general directions $w\in \Gamma (\zz^*TN)$. It is hence useful to observe that for any such $w$ we can bound 
\beq
\label{est:w-on-circle}
\int_{S^1} \abs{w(r e^{i\th})} d\th  \leqs [\log(\min(r^{-1},\mu r))]^\half\norm{w}_\zz \text{ for any } r\in [r_1,r_0]=[\mu^{-1}\fmu,\fmu^{-1}],
\eeq
as $\norm{\na w}_{L^2(\DD_1)}$, $
\int_{S^1}\abs{w (e^{i\theta})}$ and $\int_{S^1}\abs{w (\mu^{-1} e^{i\theta})}$ are all controlled by $\norm{w}_\zz$. 
In the following we can thus use that  
\beq
\label{est:weighted-norm-annulus}
\int_{A}\rho_\zz^2 \abs{w}^2 dx\leqs \fmu^{-2} \log\fmu \norm{w}_\zz^2 \text{ and } \int_A\tfrac1{r^2} \abs{w}^2 dx\leq C (\log\mu)^2\norm{w}_{z}^2
\eeq
which in particular implies that 
\beq
\label{est:v1-der-norm-w}
\int [\rho_z^2+\abs{\peps \na \gamma}^2+\abs{\peps \na v_i}^2]\abs{w}^2\leqs \norm{w}_{z}^2 \,\text{ while }\int_{A}\abs{\na \gamma}^2 \abs{w}^2\leqs \de^2\norm{w}_\zz^2,
\eeq
and we note that combined with \eqref{eq:second-var} this immediately gives \eqref{est:sec-var-general}.

In the proofs of Lemmas \ref{lemma:first-variation} and \ref{lemma:second-variation} below we will furthermore use that \eqref{est:w-on-circle} ensures that
\beq
\label{est:weighted-norm-A-star}
\int_{A^*}\tfrac1{r^2} \abs{w} dx\leq C (\log\fmu)^{\half} \norm{w}_\zz \text{ and that } \norm{w}_{L^1(\Acut)}\leq C \mu^{-1}(\log\mu)^\half.
\eeq

\subsection{Uniform definiteness of the second variation orthogonal to $\ZZ$}
\label{sec:definiteness-proof}
$ $\\
In this section we want to prove that the second variation is 
uniformly definite orthogonal to our set of singularity models as claimed in Lemma \ref{lemma:positive-def-2ndvar}.

The main step in the proof of Lemma \ref{lemma:positive-def-2ndvar} is to show that there exists a constant $c_0>0$ so that 
for any 
 $w\in T_\zz^\perp \ZZ$ with $\norm{w}_\zz=1$ there is an element $v\in  T_\zz^\perp \ZZ$ with $\norm{v}_\zz=1$  so that
 \beq
\label{claim:d^2E-definite}
d^2E(\zz)(w,v)\geq c_0.\eeq
Once we have shown this we can 
define 
$\VV_\zz^{\pm}$ as the span of the eigenfunctions  to positive respectively negative eigenvalues of the corresponding Jacobi operator 
 $$\tilde L_{\zz}:T_\zz^\perp\ZZ\to T_\zz^\perp\ZZ,$$ which is characterised by 
 $$d^2E(\zz)(v,w)=\langle  \tilde L_\zz v,w\rangle_\zz \text{ for } v,w\in T_\zz^\perp\ZZ$$
and obtain  the claim of the lemma from the spectral theorem for selfadjoint Fredholm operators.

 To prove this claim \eqref{claim:d^2E-definite} we will relate elements $\zz$ of $\ZZ_0$ and vector fields $v$ and $w$ along $\zz$ to the corresponding elements $\hat \zz =U_0\circ q_0$ of $ \HH^\si(\om_0)$ and to vector fields $\hat v$ and $\hat w$ along $\hat \zz$. 
 
For such vector fields we will always work with respect to the inner product 
 \beq
 \label{def:inner-prod-hat}
 \langle \hat w,\hat v\rangle_\pi:= \int_{\R^2} \abs{\na \pi}^2\hat w\hat v+\na \hat w\na \hat v
 \eeq
 which  (upto a factor $2$) corresponds to the $H^1$ inner product on the sphere and which approximates $\langle\cdot,\rangle_\zz$ well away from the origin.

We note that 
$d^2E(\om_0)$ is uniformly definite on $T_{\om_0}^\perp\HH^\si(\om_0)$ 
since $T_{\om_0}\HH^\si(\om_0)$ coincides with the kernel of the corresponding 
 Jacobi-operator $ L_{\om_0}:\Gamma(\om_0^*TN)\to \Gamma(\om_0^*TN) $ and since the eigenvalues of this self-adjoint Fredholm operator tend to infinity. 
As $\HH^\si(\om_0)$ is a finite dimensional manifold which is contained in a small $C^k$ neighbourhood of $\om_0$ we can then use the continuity of the Jacobi-operator to deduce that the analogue statement is also true for general elements of $ \HH^\si(\om_0)$ (provided $\si>0$ is chosen sufficiently small).

I.e.~we can use that there exists $c>0$ so that for any 
 $\hat \zz\in \HH^\si(\om_0)$ and any $\hat w\in T_{\hat \zz}^\perp \HH^\si(\om_0)$ with $\norm{\hat w}_{\pi}=1$ there exists a unit element $\hat v\in T_{\hat \zz}^\perp \HH(\om_0)$ 
 so that 
 \beq
\label{est:pos-d2E-bubble}
d^2E(\hat \zz)(\hat w,\hat v)\geq c.
 \eeq
In the proof below we will furthermore use 
that 
 variations $\zz_\eps$ in $\ZZ_0$ induced by variations of only $U_0$ and $q_0$ are well approximated by the corresponding variations $\hat \zz_\eps=U_0^{(\eps)}\circ q_0^{(\eps)}$ in $\HH^\si(\om_0)$. 
 To be more precise
we can easily check that 
\beq
\label{est:z-ustar-2}
\norm{
\abs{\zz-\hat \zz}^2+\abs{\peps(\zz-\hat \zz)}^2}_{L^\infty(\R^2\setminus \DD_{ r_0^2}) }+\int_{\R^2} \abs{\na(\zz-\hat \zz)}^2+\abs{\na \peps(\zz-\hat \zz)}^2=o(1)
\eeq 
where here and in the following we write that a quantity is given by $o(1)$ if we can ensure that it is smaller than any given positive number by choosing $\bar \mu$ sufficiently large and $\si$ and $\si_1$ sufficiently small.
Here we use that $\abs{1-\phi_\mu}\leqs \tfrac{\log\fmu}{\log\mu}=o(1)$  for $\abs{z}\geq r_0^2=\fmu^{-2}$ to bound the $L^\infty$ norm of $\peps (\zz-\hat\zz)$ and we note that we furthermore have
\beq
\label{est:z-star-1}
\norm{\peps \zz}_{\zz,\DD_{r_0}}+\norm{\peps \hat \zz}_{\pi,\DD_{r_0}}=o(1)
\eeq 
where $\norm{\cdot}_{\pi,\Om}$ and $\norm{\cdot}_{\zz,\Om}$ are defined as in  \eqref{def:inner-product} and \eqref{def:inner-prod-hat} but with the integrals taken over $\Om$.

With these preparations in place we now turn to the proof of
\eqref{claim:d^2E-definite}.

So let $\zz\in \ZZ_0$ and let $w\in T_\zz^\perp \ZZ$ be so that $\norm{w}_\zz=1$. 
We first remark that \eqref{est:weighted-norm-annulus} and \eqref{est:v1-der-norm-w} ensure that $\norm{w}_{\zz,A}+\norm{\abs{\na \zz} w}_{L^2(A)}=o(1)$ and hence that 
\beqas
d^2E(\zz)(w,w)&\geq \norm{w}_\zz^2-C \norm{w}_{\zz, \R^2\setminus A}^2-o(1).
\eeqas
The claim \eqref{claim:d^2E-definite} is thus trivially true (for $v=w$) if $\norm{w}_{\zz,\R^2\setminus A}$ is small and by symmetry we can hence assume that $\norm{w}_{\zz,\R^2\setminus \DD_{r_0}}^2\geq c_4$ for a small, but fixed constant $c_4>0$.

Given such a $w$ we first want to construct a 
$\hat w\in T_{\hat \zz}^\perp \HH^\si(\om_0)$, 
 so that 
\beq
\label{claim:hat-w}
\norm{w-\hat w}_{\pi,\R^2\setminus \DD_{s_0}}=o(1) \text{ while }  \norm{\hat w}_{\pi,\DD_{2s_0}}=o(1) \text{ and } 
\norm{w}_{\DD_{2s_0}\setminus \DD_{s_0/2}}=o(1)
\eeq 
for some $s_0\in [2r_0^2,r_0]$.
To this end we initially construct a function $w_1$ which vanishes on $\Om_1=\{z:\abs{z}\leq \mu^\mhalf\}$ and for which there is a radius $s_0\in [2r_0^2,r_0]$ so that 
$$w_1\equiv w \text{ on } \R^2\setminus \DD_{s_0} \text{ while }\norm{w_1}_{\zz,\DD_{2s_0}}=o(1)
.
$$
To obtain such a function $w_1$ we use that   
$\DD_{r_0}\setminus \DD_{r_0^2}$ contains $J\gtrsim \log(r_0^{-1})=\log\fmu$ disjoint annuli of the form $\DD_{4r}\setminus \DD_{\tfrac14 r}$ to select  $s_0\in
  [2r_0^2,\half r_0]$ for which 
\beqa \label{est:choosing_s}
\int_{\DD_{2s_0}\setminus \DD_{s_0/2}}\abs{\na w}^2 dx +\int_{S^1} \abs{\partial_\theta w(s_0 e^{i\theta})}^2
\leqs (\log\fmu)^{-1}=o(1).
\eeqa
On $\DD_{s_0}\setminus \DD_{s_0/2}$ we then define $w_1$ using a standard interpolation between $w\vert_{\partial \DD_{s_0}}$ and the mean value $\bar w$ over this circle which, thanks to \eqref{est:w-on-circle}, is bounded by $\abs{\bar w}\leqs (\log\fmu)^\half$. Combined with \eqref{est:choosing_s} and $\int_A\rho_\zz^2\leqs \fmu^{-2}$ 
this ensures that $\norm{w_1}_{\zz,\DD_{2s_0}\setminus \DD_{s_0^2}}=o(1)$. On $\DD_{s_0/2}\setminus \DD_{\mu^\mhalf}$ we can then choose $w_1$ as the harmonic function which transitions between $0$ and $\bar w$ and note that this map has  energy of order $O(\tfrac{\log\fmu}{\log\mu})=o(1)$ and weighted $L^2$ norm of order $O((\log\fmu)^\half \fmu^{-1})=o(1)$.

Given such a function $w_1$ we then set $w_2:= P_{\hat \zz} w_1$
to obtain a vector field along $\hat\zz$. We note that \eqref{est:z-ustar-2} ensures that 
$\norm{w_1-w_2}_{\pi,\R^2\setminus \DD_{s_0}}\sim \norm{w_1-w_2}_{\zz,\R^2\setminus \DD_{s_0}}=o(1)$ and that we still have $\norm{w_2}_{\zz,\DD_{s_0}}=o(1)$. 

Given a variation $\hat\zz_\eps$ in $\HH^\si(\om_0)$ with $\norm{\peps \hat \zz}_\pi =1$ we can hence consider the corresponding variation  $\zz_\eps$ of $\zz$ in $\ZZ$ and use \eqref{est:z-ustar-2}  and \eqref{est:z-star-1} as well as that $w\in P_\zz^\perp \ZZ$  to see that
$$\abs{\langle w_2,\peps \hat \zz\rangle_{\pi}}= \abs{\langle w,\peps \zz\rangle _{\zz,\R^2\setminus \DD_{s_0}}} +o(1)\leq \abs{\langle w,\peps \zz\rangle_{\zz}}+\epssimu=\epssimu.
$$
We thus deduce that $\norm{P^{T_{\hat\zz} \hat \ZZ} w_2}_{\pi}=o(1)$ and hence that $\hat w:= P^{T_{\hat\zz}^\perp\hat \ZZ} w_2$ satisfies \eqref{claim:hat-w}. 

As 
$\norm{\hat w}_{\pi}^2\geq c_4-\epssimu$ is bounded away from zero we can hence apply \eqref{est:pos-d2E-bubble} to obtain a vector-field $\hat v\in T_{\hat\zz}\HH^\si(\om_0)$ with $\norm{\hat v}_{\pi}\leqs 1$ so that 
\beq
\label{est:lower-d2E-vhat} 
d^2E(\hat \zz)(\hat w,\hat v)\geq 1
\eeq
and we will obtain the required element $v$ of $T_\zz^\perp\ZZ$ by modifying this vector field $\hat v$.

To this end we repeat the argument from above to obtain a $v_1$ 
with
\beq
\label{est:v1-properties}
v_1\equiv \hat v \text{ on } \R^2\setminus \DD_{2s_0}, \quad v_1\equiv 0 \text{ on } \DD_{\mu^\mhalf}, \quad \norm{v_1}_{\zz,\DD_{2s_0}\setminus \DD_{s_0/2}}\leqs 1 \text{ and } \norm{v_1}_{\zz,\DD_{s_0/2}}=o(1),
\eeq
except that we work with a radius $\hat s_0\in [s_0,2s_0]$ with $\int_{S^1}\abs{\partial_\theta \hat v(\hat s_0 e^{i\theta})}^2\leqs 1$ rather than with $s_0$. 

As $\hat v$ is tangential to $N$ along $\hat \zz$ and supported on $\Om_0$ where $\rho_\zz\sim \abs{\na \pi}$  we can combine \eqref{est:z-ustar-2} and \eqref{est:v1-properties} to see that
\beqas
\norm{P_{\zz}v_1-v_1}_{\zz}\leqs \norm{v_1}_{\zz,\DD{s_0/2}}
+\norm{\zz-\hat\zz}_{L^\infty(\R^2\setminus \DD_{r_0^2})} \norm{v_1}_\zz+\norm{\abs{\na \zz-\na\hat \zz} v_1}_{L^2(\R^2\setminus \DD_{r_0^2})}=o(1)
\eeqas
where we use that $\abs{\na \zz-\na\hat\zz}\leqs o(1)(\rho_\zz+\abs{\na \phimu})$ on $\R^2\setminus \DD_{r_0^2}$ to deal with the last term.

As $\hat v$ is orthogonal to $T_{\hat \zz}\HH^\si(\om_0)$ we can combine this bound with \eqref{est:z-ustar-2} and \eqref{est:z-star-1} to see that
$\langle \peps \zz_\eps,P_\zz v_1\rangle_\zz=o(1)$ for all variations $\zz_\eps$ in $\ZZ$ that are induced by variations of only $U_0$ and $q_0$. Conversely, variations of $U_1$ and $q_1$ result in variations $\zz_\eps$ for which $\norm{\peps \zz_\eps}_{\zz,\Om_0}=o(1)$ and for which we hence trivially know that $\langle \peps \zz_\eps,P_\zz v_1\rangle_\zz=o(1)$. 

All in all this ensures that $v:=P^{T^\perp_\zz \ZZ}(P_\zz v_1)\in T_{\zz}^\perp \ZZ$ is so that 
\beqs
\norm{v}_{\zz, \DD_{s_0/2}}=o(1), \quad \norm{v}_{\zz, \DD_{2s_0}\setminus \DD_{s_0/2}}\leqs 1 \text{ and } 
\norm{v-\hat v}_{\zz, \R^2\setminus \DD_{2s_0}}=o(1).
\eeqs
We now write for short $II_{\zz}(w,v) =\na w\na v-A( \zz)(\na  \zz,\na \zz)A(\zz)( w,v)$ for the integrand that appears in the formula 
\eqref{eq:second-var} for $d^2E$ 
and note that $\int_{\Om}\abs{II_\zz(w,v)}\leqs \norm{w}_{\zz,\Om} \norm{v}_{\zz,\Om}+o(1)$ for every $\Om\subset \R^2$ where the $o(1)$ term comes from the contribution of $\na \gamma$ to $\na \zz$, compare \eqref{est:v1-der-norm-w}. We hence get
\beqas
d^2E(\zz)(v,w)&\geq \int_{\R^2\setminus \DD_{2s_0}} II_\zz(w,v)-C\big[\norm{v}_{\zz,\DD_{s_0/2}}+\norm{w}_{\zz,\DD_{2s_0}\setminus \DD_{s_0/2}}\big]-o(1)\\
&\geq \int_{\R^2\setminus \DD_{2s_0}} II_{\hat\zz}(\hat w,\hat v)-C\big[\norm{v-\hat v}_{\zz,\R^2\setminus \DD_{2s_0}}+\norm{w-\hat w}_{\zz,\R^2\setminus \DD_{2s_0}}+\norm{\zz-\hat\zz}_{L^\infty(\R^2\setminus \DD_{2s_0})}\big]-o(1)\\
&\geq\int_{\R^2} II_{\hat\zz}(\hat w,\hat v)-C\norm{\hat w}_{\pi,\DD_{2s_0}}-o(1)\\
&\geq d^2E(\hat \zz)(\hat w,\hat v)-o(1)\geq 1-o(1)\geq \thalf ,
\eeqas 
where the last inequality holds true provided $\si$ and $\si_1$ are sufficiently small and $\bar \mu$ is sufficently large.
As $\norm{v}_{\zz}\leqs 1$ we hence obtain that \eqref{claim:d^2E-definite} holds true for the corresponding unit element of $T_\zz^\perp \ZZ$ which completes the proof of Lemma \ref{lemma:positive-def-2ndvar}.

\subsection{Proofs of Lemmas \ref{lemma:first-variation} and \ref{lemma:second-variation}}
$ $\\
We finally turn to the proofs of the estimates on the norms of the first and second variation of $E$ claimed in Lemmas \ref{lemma:first-variation} and \ref{lemma:second-variation}.
To simplify the notation we write for short 
\beq
\label{def:R3}
R_3:= \Ctilt \nutot(\log\mu)^\half+\tfrac{\de}{\fmu\log\mu }
\eeq
and 
\beq
\label{def:R4}
R_4:= 
 (\Ctilt+\norm{\peps \beta})\nutot\log(\mu)^{\half} 
 +\eta_{rat}\norm{\beta} \mu^{-j^*}\log(\mu)^{\half}+\tfrac{(\de+\eta_0)(\log\fmu)^\half}{\log\mu}
\eeq 
and recall that to prove  Lemma \ref{lemma:first-variation} we need to show that 
\beqs
\abs{dE(\zz)(w)}=\abs{\int\tau(\zz)w}\leqs R_3+\TT \text{ for every } w \text{ with } \norm{w}_\zz=1
\eeqs
while Lemma \ref{lemma:second-variation} asserts that 
\beqs
\abs{d^2E(\zz)(\peps \zz, w)}=\abs{\int \peps (\tau(\zz)) w}\leqs R_4+\TT+\eta_\TT \text{ for every } w\in \Gamma(\zz^*TN) \text{ with } \norm{w}_\zz=1.
\eeqs
By symmetry it suffices to bound the corresponding integrals over $\Om_1$ where
$\zz=\zz_1+\errcut$ for  $\zz_1=\pi_N({v_1})$ and for $\errcut$ which is defined by \eqref{eq:errphi} and supported on $\Acut$. 

To prove both lemmas we write the tension of $\zz$ on $\Om_1$ 
as 
\beq
\label{eq:write-tau}
\tau(\zz)=P_{\zz}(\Delta \zz_1+\Delta \errcut)=\tau(\zz_1)+(P_{\zz}-P_{\zz_1})(\Delta \zz_1)+P_{\zz}(\Delta \errcut),\eeq
and use that $\Delta j_1=0$ to write 
\beqa
\label{eq:tension-formula}
\tau(\zz_1)=
P_{\zz_1}(d\pi_N({v_1})\Delta {v_1})+P_{\zz_1}(d^2\pi_N({v_1})(\na {v_1},\na {v_1}))=T_{u_1}^\tau+T_{u_1}^A+T_\gamma+T_\pi
\eeqa
where we split the contribution of $\Delta u_1$ into the terms
\beq
\label{eq:sunny-term1}
T_{u_1}^\tau:= 
P_{\zz_1}(d\pi_N({v_1}) (\tau(\bb)) \text{ and } T_{u_1}^A:= -P_{\zz_1}(d\pi_N({v_1}))(A(u_1)(\na u_1,\na u_1)))\eeq and  write for short 
\beq
\label{eq:sunny-term2}
T_\gamma:=
P_{\zz_1}(\Delta\gamma) \text{ and } 
T_\pi:= P_{\zz_1}(d^2\pi_N({v_1})(\na {v_1},\na {v_1}))
.\eeq
In the analysis of the resulting integrals we will use that 
\beq
\label{est:z1minusv1} 
\abs{\zz_1-v_1}\leq \abs{u_1-v_1}\leq \abs{\tilde\gamma_{u_1}}+\abs{j_1}\leqs \de\phimu +\norm{\beta}\abs{\rmbmhat}\eeq
which, thanks to \eqref{est:int-quadratic-u-om}, \eqref{est:int-bm-squared} and \eqref{est:I-phimut}, ensures that
\beqa
\label{est:proof-1var-1}
\norm{\abs{v_1-\zz_1}\rho_{q}}_{L^2(\Om_1)}\leq 
\norm{\abs{u_1-v_1}\rho_{q}}_{L^2(\Om_1)}
&\leqs R_3.
\eeqa
Similarly, since we can bound 
\beqa
\label{est:peps-z1minusv1} 
\abs{\peps(u_1-v_1)}&\leqs
 \abs{\peps j_1}+\abs{\peps\tgabb} +\abs{\tilde\gamma_{u_1}}+\abs{j_1}\\
 &
 \leqs (\norm{\beta}+\norm{\peps \beta}) \abs{\rmbmhat}+\norm{\beta}\abs{\peps \rmbmhat}+(\de+\eta_0)\phi_\mu +\de(\log\mu)^{-1}\chiA,
 \eeqa compare also  \eqref{est:var-gamma}, we can use these estimates \eqref{est:int-quadratic-u-om}, \eqref{est:int-bm-squared} and \eqref{est:I-phimut} together with 
 \eqref{est:int-for-later} to see that 
\beq\label{est:proof-2var-1}
\norm{ \abs{\peps(u_1-v_1)}\rho_q}_{L^2}\leqs R_4.
\eeq 
To analyse terms that involve $\gamma$ we instead write 
$v_1=\gamma+\tilde u_1+j_1$ to see that
$$\abs{v_1-\gamma}+\abs{\peps(v_1-\gamma)}\leqs \abs{\hat q_0}+\abs{\hat q_{1,\mu}}+
\abs{\peps \hat q_0}+\abs{\peps \hat q_{1,\mu}}
\leqs \abs{z}+(1+\mu^2\abs{z}^2)^{-1}$$
and hence that also 
\beq
\label{est:proof-1var-3}
\norm{\abs{v_1-\gamma} \na \gamma}_{L^2}+\norm{\abs{\peps(v_1-\gamma)}\na \gamma}_{L^2} \leq \tfrac{\de}{\fmu \log\mu}\leqs R_3.
\eeq 
Finally since $\abs{P_{\zz_1}\na v_1-\na v_1}\leqs \abs{u_1-\zz_1}\rho_q+\Ctilt \abs{\na \rmbmhat}+\abs{v_1-\gamma}\abs{\na \gamma}$ we also get that 
\beq
\label{est:proof-1var-4}
\norm{P_{\zz_1}\na v_1-\na v_1}_{L^2}\leqs
R_3.
\eeq 
With these estimates in place we can now show that all contributions to $dE(\zz)(w)$ are controlled by $R_3+\TT$ and all contributions to $d^2E(\zz)(\peps \zz,w)$ are controlled by $R_4+\TT+\eta_\TT$. 

To begin with we use that $\abs{\tau(u_1)}\leqs \TT \rho_q^2\leqs  \TT\rho_\zz^2$ to see that  
$$\int_{\Om_1} \abs{T_{u_1}^\tau w}\leqs \TT \norm{w}_\zz=\TT$$
while we can use that 
$\abs{\peps T_{u_1}^\tau}\leqs \abs{\tau(u_1)}+\abs{\peps \tau(u_1)}\leqs \TT\rho_q\rho_\zz+\eta_\tau\rho_q^2\leqs  (\TT+\eta_\tau)\rho_\zz^2$ to bound 
\beqs
\abs{\int T_{u_1}^\tau w}\leqs \TT+\eta_\tau.
\eeqs
As $A(u_1)(\na u_1,\na u_1)\in T_{u_1}^\perp N$ we can then write  
\beq
\label{eq:sunny-1}
T_{u_1}^A= 
P_{\zz_1}\big((d\pi_N(u_1)-d\pi_N({v_1}))(A(u_1)(\na u_1,\na u_1))\big)
\eeq
and hence use 
 \eqref{est:proof-1var-1} to bound
\beqas
\int_{\Om_1} \abs{T_\bb w} 
&\leqs \norm{\abs{u_1-z_1}\rho_q}_{L^2}\leqs 
 R_3.
\eeqas
From \eqref{eq:sunny-1} we see that 
 $\abs{\peps T_{u_1}^A}\leqs \big[\abs{u_1-v_1}+\abs{\peps(u_1-v_1)}\big] \rho_q \cdot  \rho_\zz$ which we can combine with \eqref{est:proof-1var-1} and \eqref{est:proof-2var-1}
to see that also 
 \beqas 
 \abs{\int \peps T_{u_1}^A w}&\leqs \norm{ \abs{u_1-v_1}\rho_q}_{L^2}+
\norm{ \abs{\peps(u_1-v_1)}\rho_q}_{L^2}\leqs R_3+R_4\leqs R_4.
\eeqas
Similarly, we can write  
\beq
\label{eq:sunny-2}
T_\gamma=P_{\zz_1}((d\pi_N(\gamma)-d\pi_N({v_1}))(A(\gamma)(\na \gamma,\na \gamma)))+P_{\zz_1}(d\pi_N({v_1})\tau(\gamma)))
\eeq and use  
\eqref{est:der-gamma}, 
\eqref{est:v1-der-norm-w}, \eqref{est:weighted-norm-A-star} 
and \eqref{est:proof-1var-3} 
to see that
\beqa 
\abs{\int T_\gamma w}&\leq \de \norm{\abs{\gamma-v_1}\abs{\na \gamma}}_{L^2}+\de\cmu\int_{A^*}\abs{w} r^{-2}\leqs \tfrac{\de^2}{f_\mu \log\mu }+\tfrac{\de (\log\fmu)^\half}{\log\mu}\leqs R_3.\eeqa
As  
\beqa\label{est:var-ten-ga}
\abs{\peps \tau(\gamma)}\leqs (\etazero+\de^2\cmu)\abs{\Delta \phimu}+\de \abs{\peps \Delta \phimu} \leqs (\etazero+\de) \cmu r^{-2}\chiAstar
\eeqa
we can furthermore use
\eqref{est:v1-der-norm-w},  \eqref{est:weighted-norm-A-star}, \eqref{est:proof-1var-3} and 
 \eqref{est:der-gamma} to see that 
\beqas
\abs{\int \peps T_\gamma w} &\leqs \norm{\abs{\gamma-v_1}\abs{\na \gamma})}_{L^2}+\norm{\abs{\peps(\gamma-v_1)}\abs{\na \gamma}}_{L^2}+\int \abs{\tau(\gamma)} \abs{w} +\abs{\peps \tau(\gamma)}\abs{w}\\
&\leqs \tfrac{\de+\eta_0}{(\log\mu)\fmu}
+
\tfrac{\de+\eta_0}{(\log\mu)}\int_{A^*}r^{-2} \abs{w}\leqs \tfrac{\de+\eta_0}{(\log\mu)} (\log\fmu)^\half\leqs R_4
\eeqas 
where the penultimate step follows from \eqref{est:weighted-norm-A-star}.

Finally we use \eqref{eq:d2pi-A}  to write 
\beq 
\label{eq:sunny-3}
T_\pi=P_{\zz_1}(d^2\pi_N({v_1})(\na {v_1},\na {v_1})-d^2\pi_N({\zz_1})(P_{\zz_1} \na {v_1},P_{\zz_1}\na {v_1})).
\eeq
From  \eqref{est:v1-der-norm-w}, \eqref{est:proof-1var-1},  \eqref{est:proof-1var-3} and  \eqref{est:proof-1var-4} we hence get that also 
$$\int \abs{T_\pi w}
\leqs \norm{\abs{v_1-\zz_1}\rho_q}_{L^2}+\norm{\abs{v_1-\zz_1}\na\gamma}_{L^2}+\norm{P_{\zz_1}\na v_1-\na v_q}_{L^2}\leqs R_3.
$$
From \eqref{eq:sunny-3} we see that 
\beqas
\abs{\int \peps  T_\pi w}&\leqs \norm{\abs{v_1-\zz_1}\na v_1}_{L^2}
+ \norm{\abs{\peps(v_1-\zz_1)}\na v_1}_{L^2}+ \norm{P_{\zz_1}\na v_1-\na v_1}_{L^2}+\int\abs{\peps(P_{\zz_1}^\perp \na v_1)}\abs{\na v_1} \abs{w}.
\eeqas
As we can write
 $P_{\zz_1}^\perp\na v_1=(P_{\zz_1}^\perp-P_{u_1}^\perp)(\na u_1)+(P_{\zz_1}^\perp-P_{\gamma}^\perp)(\na \gamma)+(P_{\zz_1}^\perp-P_{U_1(0)}^\perp)\na j_1$ we hence also get that 
 $\abs{\int \peps  T_\pi w}\leqs R_4$.
 
All in all we hence get that  
 $$\int_{\Om_1} \abs{\tau(\zz_1) w}\leqs R_3+\TT \text{ and }
 \int_{\Om_1} \abs{\peps \tau(\zz_1) w}\leqs R_4+\TT+\eta_\TT.
$$
To complete the proofs of Lemmas \ref{lemma:first-variation} and \ref{lemma:second-variation} it remains to bound 
the contributions of the last two terms in \eqref{eq:write-tau}. These are supported on $\hat A$ where $\rho_\zz\leqs 1$, $\tau(\gamma)=0$ and hence  
  \beq
  \label{est:DElta-zhat}
  \abs{\Delta\zz_1}\leqs \mu (\nutot+\de^2(\log\mu)^{-2})
  \eeq
  so in particular 
$\abs{\Delta \zz_1} \leqs 1+ \tfrac{\de^2}{(\log\mu)^2}\mu $. 
Combined with \eqref{est:errcut} and \eqref{est:weighted-norm-A-star} this shows that
$$\abs{\int (P_\zz-P_{\zz_1})(\Delta \zz_1)w}\leqs \norm{\beta}
\nutot \big[1+ \tfrac{\de^2}{(\log\mu)^2}\mu \big]\norm{w}_{L^1(\Acut)}
\leqs \norm{\beta}
\nutot \mu^{-1}(\log\mu)^{\half}+\tfrac{\de^2}{(\log\mu)^{3/2}}$$
only gives a lower order contribution, while  \eqref{est:errcut} and \eqref{est:weighted-norm-A-star} also allow us to bound
$$\int \abs{\Delta \errcut}\abs{w}\leqs\Ctilt \nutot \int_\Acut \abs{w}\leqs \Ctilt \nutot(\log\mu)^\half\leqs R_3.$$
This completes the proof of Lemma \ref{lemma:first-variation}

Combining 
\eqref{est:DElta-zhat} with
\eqref{est:errcut-eps-rough} gives a pointwise bound of $\abs{\peps \errcut}\abs{\Delta \zz_1} \leqs R_4$ while combining
 $\abs{\peps\Delta\zz_1}\leqs \mu (\nutot +\eta_{rat}  \mu^{-j^*})+\mu \tfrac{\de^2+\de\eta_0}{(\log\mu)^{2}}$
with \eqref{est:errcut} ensures that also $\abs{\errcut}\abs{\peps\Delta \zz_1}\leqs R_4$ on $\hat A$. 
Combined with \eqref{est:weighted-norm-A-star} we can hence obtain that 
\beqas
\abs{\int \peps((P_{\zz_1}-P_{\zz})\Delta \zz_1) w}
& \leqs R_4 \mu^{-1}(\log\mu)^\half\ll R_4
 \eeqas
 just gives a lower order contribution to $d^2E(\zz)(\peps \zz,w)$. 

Finally \eqref{est:errcut}, \eqref{est:errcut-peps-Delta} and \eqref{est:weighted-norm-A-star} allow us to also bound
\beqa
\int \abs{\peps(P_\zz \Delta \errcut)}\abs{w}&\leqs ( \norm{\peps \Delta \errcut}_{L^\infty(\hat A)}+ \norm{ \Delta \errcut}_{L^\infty(\hat A)} )\norm{w}_{L^1(\Acut)}\\
&\leqs  \big[(\Ctilt +\norm{\peps \beta})\nutot+\eta_{rat} \norm{\beta} \mu^{-j^*}\big] (\log\mu)^\half\leqs R_4,
\eeqa
which completes the proof of Lemma \ref{lemma:second-variation}.

M. Rupflin: Mathematical Institute, University of Oxford, Oxford OX2 6GG, UK\\
\textit{melanie.rupflin@maths.ox.ac.uk}

\end{document}